\theoremstyle{plain}
\newtheorem{thm}{Theorem}[section]
\newtheorem{lem}[thm]{Lemma}%[section]
\newtheorem{prop}[thm]{Proposition}%[section]
\newtheorem{defn}[thm]{Definition}%[section]
\newtheorem{rem}[thm]{Remark}%[section]
\newtheorem{ass}{Assumption}
\theoremstyle{definition}
\numberwithin{equation}{section}
\def \be {\begin{equation}}
	\def \ee {\end{equation}}
\def \E {\mathbb{E}}
\def \P {\mathbb{P}}
\def \R {\mathbb{R}}
\def \Law {\mathrm{Law}}
\def \T {\mathbb{T}}
\def \dn {\Delta x_n} 
\def \bm1 {\mathbbm{1}}
\renewcommand{\phi}{\varphi}
\renewcommand{\epsilon}{\varepsilon}
\renewcommand{\tilde}{\widetilde}
\renewcommand{\hat}{\widehat}
\renewcommand{\bar}{\overline}
\begin{document}

	\title{Mean field games master equations: from discrete to continuous state space}
	
	\author{Charles Bertucci and Alekos Cecchin}

	\address[C. Bertucci]
	{\newline \indent Centre de Math\'ematiques Appliqu\'ees, CNRS, UMR 7641, \'Ecole Polytechnique
		\newline
		\indent Route de Saclay, 
		91128 Palaiseau Cedex, France }
	\email{charles.bertucci@polytechnique.edu}
	\address[A. Cecchin]
	{\newline \indent Dipartimento di Matematica ``Tullio Levi-Civita'', Università di Padova
		\newline
		\indent Via Trieste 63, 35121 Padova, Italy}
	\email{alekos.cecchin@unipd.it}
	%\urladdr{http://sites.google.com/view/alekoscecchin}
	\thanks{C. Bertucci acknowledges financial support of chair FDD of Institut Louis Bachelier. A. Cecchin benefited from the support of LABEX Louis Bachelier - project ANR-11-LABX-0019, ECOREES ANR Project, FDD Chair and Joint Research Initiative
		FiME; he was also partially supported by the INdAM-GNAMPA Project 2023 ``Stochastic mean field models: analysis and applications''}

	\date{September 11, 2023}
	\subjclass{35Q89,  49M25, 	49N80, 60J28, 	65M06, 	65M75, 91A16} 
	\keywords{Mean field games, master equation, finite difference scheme, Markov chain approximation, convergence rate, monotone solutions, common noise}
	
	%35Q89 %PDE MFG
	%49M25 %Discrete approximations in optimal control  
	%49N80 % control MFG 
	%60J28 % Applications of continuous-time Markov processes on discrete state space
	%65M06 % Finite difference methods for initial value and 
	%initial-boundary value problems involving PDEs  
	%65M75 % Probabilistic methods, particle methods, etc. for initial 
	% value and initial-boundary value problems involving PDEs
	%91A16 %mean field games

	\begin{abstract}
		%We analyze mean field games for diffusion-based models and their space discretization. For simplicity, the dynamics evolves on the one-dimensional torus, and there is no common noise. We examine a space discretization of the dynamics, which is reminiscent of the Markov chain approximation method in stochasctic optimal control, but also of finite difference numerical schemes for Hamilton-Jacobi equations. The discretized problem turns out to be a mean field game over a finite state space, still in continuous time. Assuming monotonicity, which ensures uniqueness and also stability, we show that the discrete mean field games converge to the continuous limit, the main result being to provide a rate for the convergence both of the master equations governing the games and of the optimal trajectories. We consider first the case in which there exists a smooth solution to the limit master equation and then the case where there is no such solution. 
		%In the end, we consider briefly the case of common noise and show how the notion of monotone solutions (introduced by C. Bertucci) helps in proving the convergence of the master equations, by means of compactness arguments. 
		%
		This paper studies the convergence of mean field games with finite state space to mean field games with a continuous state space. We examine a space discretization of a diffusive dynamics, 
		which is reminiscent of the Markov chain approximation method in stochasctic control, but also of finite difference numerical schemes; time remains continuous in the discretization, and the time horizon is arbitrarily long. 
		We are mainly interested in the convergence of the solution of the associated master equations as the number of states tends to infinity. We present two approaches, to treat the case without or with common noise, both under monotonicity assumptions. The first one uses the system of characteristics of the master equation, which is the MFG system,
		to establish a convergence rate for the master equations without common noise and the associated optimal trajectories, both in case there is a smooth solution to the limit master equation and in case there is not. 
		The second approach relies on the notion of monotone solutions introduced by \cite{bertucci2021monotone, bertucci2021monotone2}. 
		In the presence of common noise, we show convergence of the master equations, with a convergence rate if the limit master equation is smooth, otherwise by compactness arguments. 
		%In both cases, we show that convergence holds under standard monotonicity assumptions and that a rate of convergence can be obtained if a smooth solution to the limit equation exists, but also if there is no such solution in the case without common noise.
	\end{abstract}
	
	\maketitle
	
	%\tableofcontents

	\section{Introduction}
	This paper is interested in the convergence of value functions of mean field games (MFG for short) in finite state space when the number of states tends to infinity. We show that if the MFG in finite state space is a suitable discretization of a continuous MFG, the value, i.e. the solution of the master equation, in finite state space converges toward the value of the MFG in continuous state space when the number of states tends to infinity, and we provide a rate for such convergence.
	
	\subsection{General introduction}
	MFG are differential games involving non-atomic players which interact only through mean field terms. A general mathematical study of such games started with \cite{lasry2007mean,lions2007cours} and independently in \cite{hcm}. We refer to the books \cite{carmona2018probabilistic,cardaliaguet2019master} for a more complete presentation of the theory, and also to the lecture notes \cite{notescetraro}. Several properties of those games being understood by now, let us stress the two properties which are the most helpful to understand the following. The first one is that Nash equilibria of the game can be characterized in terms of a system of differential equations which may be stochastic or not, depending on the nature of the game, and which are ordinary differential equations (ODE for short) if the state space of the players is finite or partial differential equations (PDE for short) if the state space of the players is continuous. Such a characterization of the Nash equilibria is called the MFG system. The second main aspect of MFG is that an adversarial regime can be identified. In this so-called monotone regime, there is always a unique Nash equilibrium in the MFG. This property allows to define a concept of value in this situation. This value is the solution of a PDE called the master equation, which is set on a finite dimensional space (the simplex in $\R^n$) if the state space of the players is finite and on an infinite dimensional space (the space of probability measures) if the state space of the players is continuous. Notably, the MFG system represents the system of characteristics of the master equation.
	
	MFG master equations have attracted quite a lot of attention in the last years. 
	First in the continuous state space, we mention the main contribution  
	\cite{cardaliaguet2019master} for classical solutions (see also \cite{CCD2014, zhang2021}) and 
	\cite{mou2020,gangbo2021,bertucci2021monotone2,cardaliaguet2021weak,mou2022, cecchindelarue2022} for various definitions of weak solutions. In general, to have classical solutions to the master equation,  which is a PDE in the space of probability measures, for arbitrary time horizon, the cost coefficients are required to be differentiable with respect to the measure argument and monotone (see however the recent preprint \cite{mou2022}), otherwise weak solutions have to be considered, which are at least continuous in the measure argument if monotonicity holds. 
	In the finite state space, classical solutions are considered in \cite{bayraktarcohen,cecchin2019b}, without common noise, and  in  \cite{bertucci2019some,bayraktar2021,delaruecetraro} with various forms of common noise. Some definitions of weak solution are given in  \cite{cecchin2019a,cecchin2021,bertucci2021monotone}.

	%and the works \cite{cardaliaguet2019master,mou2020,gangbo2021,zhang2021,bertucci2021monotone2,cardaliaguet2021weak,mou2022} in the continuous state space case.
	
	\subsection{Bibliographical comments}
	The first numerical schemes for mean field games were proposed in \cite{achdou2010mean,achdou2012mean,achdou2013mean}, based on finite difference numerical methods for PDEs. 
	Several other methods have been studied: as an incomplete list, 
	we mention  \cite{benamou2015augmented} for an augmented Lagrangian method,  \cite{almulla2017two} for gradient flow, 
	\cite{briceno2018proximal} for a proximal method, \cite{bertucci2020uzawa} for the use of Uzawa's algorithm, \cite{CCD2019} for a probabilistic method based on the forward-backward system of SDEs, the survey  \cite{laurierebis}  for methods based on machine learning, and \cite{gomes_numerics} for a method for finite state mean field games; we mention also the recent survey \cite{lauriere}. 
	% and at the level of the MFG system, where a compactness argument was established to prove their convergence result. 
	The discretization we analyze in the paper has the advantage of being itself a mean field game, on a finite state space. 
	The convergence of discrete-time finite state MFG toward ones with continuous state space has been studied in \cite{haeedinkanloo}, in the case of deterministic dynamics without idiosyncratic or common noise.
	Their proof of convergence relies on probabilistic compactness  and weak convergence arguments and on the probabilistic representation of the MFG. 
	%Severeral papers deal with the convergence of numerical schemes for MFG systems, especially for finite difference schemes.  The main difference with our approach lies in the fact that in numerical schemes the time is also discretized. 
	%Results on convergence for finite difference schemes of MFG systems are given in  \cite{achdou2010mean,achdou2012mean,achdou2013mean,benamou2015augmented,almulla2017two,briceno2018proximal,bertucci2020uzawa}; for a survey, we refer to the recent notes \cite{lauriere}. 

	\subsection{Main results of the paper} 
	The discretization we study in the paper is the natural one, both from a PDE and a probabilistic perspective. At the PDE level, it is almost equal to the finite difference approximation studied in \cite{achdou2010mean}. We exploit also the probabilistic interpretation, which turns out to be close to the scheme studied in \cite{haeedinkanloo} and is based on the Markov chain approximation method for stochastic control problems introduced by Kushner \cite{kushner}. As already mentioned, such discretization has the advantage of being itself a MFG over a finite state space, of the type first analyzed in \cite{gomes}.  
	One of the main differences with the aforementioned work is that the time remains continuous in the discretized model. 
	We consider dynamics on the one dimensional torus and with a non-degenerate idiosyncratic noise; we always assume monotonicity of the cost coefficients and thus consider an arbitrary time horizon.  
	We remark that the focus on a one dimensional state space with periodic boundary condition is mainly to simplify the already heavy notations. We leave to the interested reader the generalization to a higher dimensional state space and indicate along the paper the arguments which do not immediately extend to such a situation. 
	
	The main difference with the other works on numerical methods for MFGs is that our strategy to show the convergence is based on the master equations of the discrete and continuos MFGs. One of the main results is to provide a convergence rate for the approximation, which is a new result for numerical methods for MFGs, to the best of our knowledge.
	\footnote{We remark, however, that several months after posting the preprint of the present paper on arXiv, the preprint \cite{pfeiffer} appeard on arXiv, in which the authors establish a  rate for the convergence of a $\theta$-scheme to the MFG system without common noise, with different methods.}
	Our analysis is in fact a theoretical study of the convergence of finite towards continuous mean field games and, even though we do not solve the curse of dimensionality, our results permit to approximate the continuous state master equation, which is a PDE set on the infinite dimensional space of probability measure, with more tractable PDEs set on finite dimensional spaces whose dimension grows, and we establish a rate for such convergence.

	The approach of this paper is twofold. In a first time, we study MFG without common noise. We first consider the case in which there exists  classical solutions to the  master equations, and establish a rate of convergence for both the value of the MFG (the master equation) and the optimal trajectories (Thm. \ref{thm:conv:Un} and \ref{thm:conv:traj}). The method employed to prove these results is inspired by the stability argument for forward-backward systems also used in \cite{cardaliaguet2019master}. 
	We then present a less restrictive approach, without considering directly the master equation which may not have a smooth solution. In this case, we use just the system of characteristics (the MFG system) and the monotonicity and prove a convergence rate for both the value of the MFG and the trajectories at the equilibrium (Thm. \ref{thm:conv5}); indeed, we establish a rate for the convergence of the MFG systems, by exploiting mainly their stability under monotonicity. Notably, the convergence rate we obtain is worse in case there is no smooth solution to the master equation. We remark also that the non-degeneracy of the independent noise (in other words, the presence of the laplacian) is crucial to obtain a convergence rate. 
	Along the way, we show a convergence rate for a Markov chain approximation of a diffusion, which we believe might be of independent interest and thus is presented in the Appendix \ref{appendixA} (Prop. \ref{lem:7}).  
	
	In a second time, we study MFG with the type of common noise introduced in \cite{bertucci2019some}, in the monotone regime, which basically produces common jumps of the whole population. We first prove  a compactness result on the master equation of the discrete model, which allows to establish the convergence of the value of the MFG, which is the master equation {(Thm. \ref{thm:main:mono})}. With such type of common noise,  we do not use the system of characteristics because it might be stochastic and with jumps and thus more difficult to treat. Instead, this part on MFG with common noise relies on an intrinsic study of the master equation through the concept of monotone solutions introduced in \cite{bertucci2021monotone,bertucci2021monotone2} and used in \cite{cardaliaguet2021weak}.  This concept enables us to work with solution of the master equations which are merely continuous in the finite state space case and only continuous with respect to the measure variable in the continuous state space limit. The aforementioned convergence result can be seen as an illustration of the stability of these monotone solutions. Since the method of the proof of this result is based on a compactness argument, we do not get in general a convergence rate in this case. 
	Finally, we show that if a classical solution of the limit problem exists, then a rate of convergence for the value can be proved also in this case (Thm. \ref{thm20}), by exploiting a variation of the method introduced by \cite{lions2007cours}. 
	
	\subsection{Organization of the paper} The rest of the paper is organized as follows. In Section \ref{sec:2}, we present first the continuous state MFG in \S \ref{S:2.1} and then its discretization, which is the finite state MFG, in \S \ref{sec:2.2}, both from a probabilistic and PDE point of view, together with their master equations. The standing Assumptions are stated in \S \ref{S:2.2}. Section \ref{sec:3} is devoted to the study of convergence in the absence of common noise: the approach with a smooth solution to the master equation is in \S \ref{sec:3.1}, while the approach based on the MFG system is in \S \ref{sec:3.2}. Section \ref{sec:4} studies the convergence for MFGs with common noise: the compactness estimates is presented in \S \ref{sec:4.4} and the convergence of monotone solutions is in \S \ref{sec:4.5}, while the convergence for smooth solutions is in \S \ref{sec:4.6}; the other subsections contain auxiliary results and remarks on monotone solutions. Finally, Appendix \ref{appendixA} contains the result about the convergence rate  for a Markov chain approximation of a diffusion.

	\subsection{Notation}
	\begin{itemize}
		\item $\langle \cdot, \cdot \rangle$ stands for either the usual scalar product between two element of $\R^d$ or for the extension of the $L^2$ scalar product for functional spaces in duality, depending on the context.
		\item The unit circle is denoted by $\mathbb{T}$.
		\item The set of Borel measures on $E$ is denoted by $\mathcal{M}(E)$ whereas $\mathcal{P}(E)$ stands for the set of probability measures on $E$.
		\item The usual norms on the H\"older spaces $\mathcal{C}^{k+\gamma}(\T)$ are denoted by $\|\cdot\|_{k+ \gamma}$, while the norms on $\mathcal{C}^{k'+\beta, k+\gamma}([0,T]\times\T)$ are denoted $\|\cdot\|_{k'+\beta, k+ \gamma}$, {for $k,k' \in \mathbb{N}$ and $0<\beta,\gamma<1$.}
		\item For a function $U : \mathcal{P}(\mathbb{T}) \to \R$, when it is defined we denote, for $m\in \mathcal{P}(\mathbb{T})$ and $x\in\T$,
		\be
		\label{delta_m_der}
		\frac{\delta U}{\delta m}(m,x) = \lim_{\theta \to 0}\frac{ U((1-\theta)m + \theta \delta_x) - U(m)}{\theta}.
		\ee
		\item For $\mu,\nu \in \mathcal{P}(E)$, with $E$ a metric space, we denote by $W_1$ the Monge-Kantorovich distance between $\mu$ and $\nu$, {which is also called the 1-Wasserstein distance.} 
		\item We fix a filtered probability space $(\Omega, \mathbb{F}= (\mathcal{F}_t)_{0\leq t\leq T}, \mathbb{P})$ satisfying the usual conditions, large enough to contain all the processes we will introduce. All SDEs will have indeed pathwise strong solutions. The law of a random variable is denoted by 
		$\Law(\xi) = \P \circ \xi^{-1}$.  
		\item  $\mathcal{D}([0,T], \T)$ is the space of c\`adl\`ag functions endowed with the Skorokhod $J_1$ topology. Convergence of processes in law is meant as usual on this space. 
	\end{itemize}
	
	\section{The continuous and discrete models} 
	\label{sec:2}
	
	%\section{the limit model}
	
	In this section we introduce first the model at interest in the limit of an infinite number of states, in the case without common noise, and then its space discretization. As already mentioned, we shall focus on a one dimensional state space with periodic boundary condition. 
	%This is mainly to simplify the already heavy notations. We leave to the interested reader the generalization to a higher dimensional state space and indicate along the paper the arguments who do not immediately extend to such a situation. 
	
	\subsection{The limit MFG model} 
	\label{S:2.1}
	We first consider the master equation of unknown $U:[0,T] \times \mathbb{T} \times \mathcal{P}(\T) \rightarrow \R$, in dimension 1 on the torus:
	\be
	\label{master}
	\begin{split}
		&-\partial_t U - \sigma\partial_{xx} U +  H(x,\partial_x U) 
		-  \left\langle \sigma \partial_{xx} m + \partial_x(\partial_pH(\cdot,\partial_x U) m), \frac{\delta U}{\delta m}(t,x,m, \cdot) \right\rangle = f(m)(x) \\
		& U(T,x,m) = g(m)(x),
	\end{split}
	\ee
	where $\sigma > 0$, $H : \mathbb{T} \times \mathbb{R} \to \mathbb{R}, f,g : \mathcal{P}(\mathbb{T}) \to \mathcal{C}(\mathbb{T})$ and $T > 0$ are the data of the problem. This equation corresponds to the following MFG. The dynamics of a player is
	\be 
	\label{dyn}
	dX_t = \alpha(t,X_t) dt + \sqrt{2\sigma} d {B_t},
	\ee 
	where $\alpha$ is its closed-loop control (assumed to be bounded) and {$(B_t)_{t \geq 0}$} is a Brownian motion on $(\Omega, \mathbb{F}, \P)$. Given an anticipation $(\mu_t)_{t \in [0,T]}$ on the repartition of the players in the state space, the expected cost of this player is given by
	\be 
	\label{cost}
	J(\alpha,(\mu_t)_{t \geq 0}) = \E\left[ \int_0^T L(X_t,\alpha(t,X_t))+ f(\mu_t)(X_t) dt +g(\mu_T)(X_T) \right],
	\ee 
	where $L$ is a cost function such that its Legendre transform $L^*$ with respect to its second argument is equal to the Hamiltonian $H(x,p)$. 
	
	We recall that a solution of the MFG (or Nash equilibrium) is a couple $(\alpha, \mu)$ such that $J(\alpha, \mu) = \inf_\beta J(\beta, \mu)$ and $\Law(X_t) = \mu_t$, where $X$ is the optimal process given by $\alpha$. 
	Nash equilibria of the MFG can be characterized through the MFG system 
	\be 
	\label{MFGsystem}
	\begin{cases}
		-\partial_t u- \sigma \partial_{xx} u + H(x,\partial_x u) = f( \mu_t)(x) &\\
		\partial_t \mu - \sigma\partial_{xx} \mu - \partial_x ( \partial_pH(x,\partial_xu)\mu) =0 &\\
		u(T,x)= g(\mu_T)(x) \qquad \mu_0 =m_0.
	\end{cases}
	\ee 
	%Let us finish this section by giving the main assumptions that shall be in force for the rest of paper.
	and the optimal control is $\alpha(t,x)= -\partial_p H(x, \partial_x u(t,x))$,
	where the backward equation is the HJB equation for the value function $u$ of the control problem \eqref{dyn}-\eqref{cost}, while the forward equation is the KFP equation for the consistency condition. We remark that \eqref{MFGsystem} represents the system of characteristics of \eqref{master}, in the sense that $u(t,x)=U(t,x,\mu_t)$ if the equations are well-posed.  
	{A \emph{classical solution} to the master equation \eqref{master} is defined as a function $U:[0,T] \times \mathbb{T} \times \mathcal{P}(\T) \rightarrow \R$ such that the equation is satisfied and $U$ and all the derivatives appearing in the equation are continuous in their arguments $(t,x,m)$, the set $\mathcal{P}(\T)$ being equipped with the 1-Wasserstein distance $W_1$. More precisely, after integration by parts, the derivatives appearing in the equation are $\partial_x U$, $\partial_{xx} U$, $\frac{\delta U}{\delta m}$, $\partial_y  \frac{\delta U}{\delta m}$ and $\partial_{yy}  \frac{\delta U}{\delta m}$.}
	
	\subsection{Assumptions} 
	\label{S:2.2}
	
	We state the assumptions which are in force throughout the paper. 
	We assume that the couplings $f,g : \mathcal{M}(\mathbb{T}) \to \mathcal{C}^{\gamma}(\mathbb{T})$ 
	%
	%We also assume that they 
	are monotone, i.e. 
	\be 
	\label{mono}
	\begin{split}
		\int_\T (f(x,m) -f(x, \tilde{m})) (m- \tilde{m})(dx) &\geq 0  \qquad \forall m, \tilde{m} \in \mathcal{M}(\T) \\
		\int_\T (g(x,m) -g(x, \tilde{m})) (m- \tilde{m})(dx) &\geq 0 \qquad \forall m, \tilde{m} \in \mathcal{M}(\T),
	\end{split} 
	\ee
	and also that they are $W_1$-Lipschitz continuous in $m$ (uniformly in $x$), $f$ is Lipschitz also in $x$ (uniformly in $m$) and $g$ is (valued and) bounded in $\mathcal{C}^{2+\gamma}(\T)$, uniformly in $m$, for a $\gamma \in (0,1)$.

	The Hamiltonian $H(x,p)$ is $\mathcal{C}^2$, uniformly convex in $p$ on all compact sets. 
	The duration of the game $T> 0$ is arbitrarily long but fixed.

	\subsection{The discrete MFG model}
	\label{sec:2.2}
	
	We construct approximations on the previous model with finite state and continuous time. The dynamics of the underlying Markov chain has the peculiarity that it jumps either right or left, with the convention that at the boundary of the torus it jumps on the other side. 
	For any $n$, we consider then the $n$ states $S^n=\{ x_1^n, \dots, x_n^n\}=\{ 1/n, \dots, 1=0\}$ with mutual distance $\Delta x_n = 1/n$.
	The discretization we study is the natural one, from a control or MFG perspective, see for instance the seminal papers \cite{achdou2010mean,achdou2012mean} and \cite{haeedinkanloo}.

	%\subsection{The discrete model} 
	Let us state first the probabilistic interpretation of the discrete model by means of controlled Markov chains. 
	We assume to control the jump rate on the right and on the left by means of functions  denoted  $\alpha_+^n, \alpha_-^n :[0,T] \times S^n\rightarrow [0,+\infty)$; the Markov chain $X^n$ then satisfies 
	\be
	\label{rate:n}
	\begin{split}
		&\P( X^n_{t+\Delta t} = x^n_{i+1} | X^n_t = x^n_i) = \left(\frac{\alpha_+^n(t,x^n_i)}{\dn} + \frac{\sigma}{\dn^2} \right) \Delta t +o(\Delta t), \\
		& \P( X^n_{t+\Delta t} = x^n_{i-1} | X^n_t = x^n_i) = 
		\left(\frac{\alpha_-^n(t,x^n_i)}{\dn} + \frac{\sigma}{\dn^2} \right)\Delta t +o(\Delta t), 
	\end{split}
	\ee
	with the convention that $x_{n+1}^n =x_1^n =1/n$ and $x_0^n =x_n^n = 1$.
	Given anticipations $(\mu^n_t)_{t \geq 0}$ on the repartition of players in $S^n$ the cost is given by
	\be 
	\label{cost:n} 
	J^n(\alpha_{\pm}^n, \mu^n) = \E\left[ \int_0^T L(X^n_t,\alpha_+^n(t,X^n_t)) +
	L(X^n_t,-\alpha_-^n(t,X^n_t))- L(X^n_t,0) + f(\mu^n_t)(X^n_t) dt +g(\mu^n_T)(X^n_t) \right].
	\ee
	A solution of the MFG is still a couple $(\alpha_{\pm}^n, \mu^n)$ such that $\alpha_{\pm}^n$ is optimal for $\mu^n$ fixed and 
	$\mu^n_t = \Law(X^n_t)$, where $X^n$ is the optimal process.
	
	For a function $u: S^n \rightarrow \R$ we denote the right and left  first order $n$ finite difference by
	\be 
	\Delta_+^n u(x) = \frac{u(x+\dn)-u(x)}{\dn},
	\qquad \Delta_-^n u(x) = \frac{u(x-\dn)-u(x)}{\dn},
	\ee 
	and the second order finite difference
	\be 
	\Delta_2^n u(x) =\frac{u(x+\dn)-2 u(x)+u(x-\dn)}{\dn^2} .
	\ee
	We remark that if $u : \mathbb{T}\to \R$ is smooth, then $\lim_{n\rightarrow \infty} \Delta_{\pm}^n u(x)  =\pm\partial_x u(x)$ and $\lim_{n\rightarrow \infty} \Delta_2^n u(x)  =\partial_x^2 u(x)$. The optimization provides the discrete HJB equation
	\be 
	\label{HJB:n} 
	-\frac{d}{dt} u^n +H_{\uparrow}(x,\Delta_+^n u^n(x)) +H_{\downarrow}(x,-\Delta_-^n u^n(x)) - \sigma\Delta_2^n u^n(x) =f(\mu^n_t)(x) , \qquad x\in S^n,
	\ee 
	where 
	\be
	\label{Hn}
	H_{\uparrow}(x,p) := -\inf_{\alpha \geq 0} \{L(x,\alpha) + \alpha p\}; \qquad  H_{\downarrow}(x,p) := -\inf_{\alpha \geq 0} \{L(x,-\alpha) - \alpha p\} + L(x,0).
	\ee
	%where $r_-$ denotes the negative part of $r$, and the optimal controls are given by 
	The optimal controls are given in feedback form by 
	\be \label{alpha:feedback}
	\alpha^n_+(t,x) = -\partial_p H_{\uparrow}(x,\Delta_+^n u^n(x)), \qquad 
	\alpha^n_-(t,x) = \partial_p H_{\downarrow}(x,-\Delta_-^n u^n(x)) , \qquad x\in S^n.
	\ee
	Let us remark that $H_{\uparrow}$ and $H_{\downarrow}$ are such that for $(x,p)\in \mathbb{T}\times \R$
	\be
	\begin{split}
		H_{\uparrow}(x,p) + H_{\downarrow}(x,p) = H(x,p)\\
		-\partial_pH_{\uparrow}(x,p) - \partial_pH_{\downarrow}(x,p) = -\partial_pH(x,p). 
	\end{split}
	\ee
	Moreover, $L$ is smooth and uniformly convex in $a$ on compact sets, but $H_\uparrow$ and $H_\downarrow$ are neither uniformly convex nor $\mathcal{C}^2$; however, they are $\mathcal{C}^1$ and $H_\uparrow$, $H_\downarrow$,  $\partial_p H_\uparrow$, $\partial_p H_\downarrow$ are still locally Lipschitz in $(x,p)$.

	The generator of the dynamics of $X^n$ associated to the optimal controls is given by 
	\be
	\label{gen:n}
	\begin{split} 
		\mathcal{L}^n \phi(x) &= \left(\frac{-\partial_p H_{\uparrow}(x,\Delta_+^n u^n(x))}{\dn} +\frac{\sigma}{\dn^2} \right) \left[ \phi(x+\dn) - \phi(x)\right] \\
		&\qquad + \left(\frac{\partial_p H_{\downarrow}(x,-\Delta_-^n u^n(x)) }{\dn} +\frac{\sigma}{\dn^2} \right) \left[ \phi(x-\dn) - \phi(x)\right] \\
		&=-\partial_p H_{\uparrow}(x,\Delta_+^n u^n(x))\Delta_+^n \phi(x) 
		+ \partial_p H_{\downarrow}(x,-\Delta_-^n u^n(x))\Delta_-^n \phi(x) + \sigma\Delta_2^n \phi(x).
	\end{split}
	\ee
	Hence the discrete Fokker-Planck equation associated to this generator is given by
	\be\label{FP:n}
	\begin{split}
		\frac{d}{dt}\mu^n - \sigma \Delta^n_2\mu^n + \Delta_-^n(\partial_pH_{\uparrow}(x,\Delta_+^nu^n(x))\mu^n)\\
		- \Delta_+^n(\partial_pH_{\downarrow}(x,-\Delta_-^nu^n(x))\mu^n) = 0, \qquad x \in S^n.
	\end{split}
	\ee
	
	For a function $U$ defined on  $\mathcal{P}(S^n)$ we denote by 
	$\partial_{m_j} U$ its derivative along the direction $e_j$; and denote equivalently $e_j = e_{x_j}$ and $\partial_{m_j}U= \partial_{m_{x_j}}U$, 
	because we view $m\in\mathcal{P}(S^n)$ as $m= \sum_{j=1}^n m_j \delta_{x_j}$. 
	More precisely, we will consider only derivatives along directions $(e_j-e_i)$, which are tangent vectors to the simplex. 
	The discrete master equation for $U^n:[0,T] \times S^n \times \mathcal{P}(S^n)$ is then given by 
	\be 
	\label{master:n}
	\begin{split}
		&-\partial_t U^n(t,x,m)   +H_{\uparrow}(x,\Delta_+^n U^n(t,x,m)) +H_{\downarrow}(x,-\Delta_-^n U^n(t,x,m)) - \sigma\Delta_2^n U^n(t,x,m) -f(m)(x) \\
		& \qquad -\sum_{y\in S^n} m_y  \left( \frac{-\partial_p H_{\uparrow}(y,\Delta_+^n U^n(y,m))}{\dn} +\frac{\sigma}{\dn^2} \right) \left( \partial_{m_{y+\dn}} U^n(x,m) - \partial_{m_y} U^n(x,m)\right) \\
		& \qquad - \sum_{y\in S^n} m_y  \left( \frac{\partial_p H_{\downarrow}(y,-\Delta_-^n U^n(y,m))}{\dn} +\frac{\sigma}{\dn^2} \right) \left( \partial_{m_{y-\dn}} U^n(x,m) - \partial_{m_y} U^n(x,m)\right) =0
	\end{split}
	\ee 
	It can be derived as for the continuous state space, imposing that $u^n(t,x)= U^n(t,x,\mu^n)$ and applying the chain rule. {Here, by a classical solution to the master equation \eqref{master:n}, we mean a function in $\mathcal{C}^1([0,T] \times \mathcal{P}(S^n); \R^n)$ solving the equation.}
	We note, for future use, that the last two terms are equal to 
	\begin{align}
		&-\int_{\T} m(dy)  \frac{-\partial_p H_{\uparrow}(y,\Delta_+^n U^n(y,m))}{\dn}  \left( \partial_{m_{y+\dn}} U^n(x,m) - \partial_{m_y} U^n(x,m)\right) \label{14}\\
		& - \int_{\T} m(dy)  \frac{\partial_p H_{\downarrow}(y,-\Delta_-^n U^n(y,m))}{\dn}   \left( \partial_{m_{y-\dn}} U^n(x,m) - \partial_{m_y} U^n(x,m)\right) \label{15}\\
		& - \int_{\T} m(dy) \frac{\sigma}{\dn^2} \left( \partial_{m_{y+\dn}} U^n(x,m) - 2 \partial_{m_y} U^n(x,m) + \partial_{m_{y-\dn}} U^n(x,m) \right) \label{16}
	\end{align} 
	recalling  that $m= \sum_{j=1}^n m_j \delta_{x_j}$. 
	
	%Let us remark that in this formulation, the Hamiltonian is not $\mathcal{C}^2$ as a function of $(p_+,p_-) = (\frac{u(x+\dn)-u(x)}{\dn}, \frac{u(x-\dn)-u(x)}{\dn})$, thus the existence of a classical solution to the master equation is not entirely clear. 
	
	\begin{rem}
		\label{rem:quadratic}
		As an example, consider the simple case of quadratic Lagrangian: $L(a)=\frac{a^2}{2}$. In this case, we have $H_\uparrow(p) = \tfrac12 p_-^2$, $H_\downarrow(p) = \tfrac12 p_+^2$, and thus $\alpha^n_+ = (\Delta^n_+ u)_-$, $\alpha^n_- = (-\Delta^n_- u)_+$, where $p_+$ and $p_-$ denote the positive and negative part of $p$.  
	\end{rem}

	\subsection{Heuristic derivation of the limit master equation} 
	\label{sec:1.2}
	In this section, we give a formal justification of the previous discretization. Ultimately, we want to show that 
	\be 
	\lim_{n\to \infty} U^n(t,x^n,m^n) = U(t,x,m),
	\ee
	when $|x^n -x| + W_1(m^n,m)\to 0$.
	Let us assume that the above holds true and show that, formally, the master equation \eqref{master:n} converges indeed to \eqref{master}. 
	We first have 
	\be 
	\lim_n \Delta_{\pm}^n U^n(t,x,m) = \pm \partial_x U(t,x,m), \qquad 
	\lim_n \Delta_2^n U^n(t,x,m) = \partial_{xx} U(t,x,m)
	\ee
	and thus  the first terms in \eqref{master:n} converge to the corresponding one in \eqref{master}. 
	
	%We recall the definition of the measure derivative: for a function 
	%$U:\mathcal{P}(\T)\rightarrow \R$ the first derivative $\frac{\delta U}{\delta m} (m; y)$ is defined by the limit
	%\be
	%\frac{\delta U}{\delta m}(m,x) = \lim_{\theta \to 0}\frac{ U((1-\theta)m + \theta \delta_x) - U(m)}{\theta}.
	%\ee 
	Formally, we should get $U^n(t,x,m) \approx U(t,x, \sum_{j} m_j \delta_{x_j})$:  applying  definition \eqref{delta_m_der} of the measure derivative, 
	we  have 
	\be
	\begin{split} 
		\partial_{m_{y\pm\dn}} U^n(x,m) - \partial_{m_y} U^n(x,m)
		& = \int_\T \frac{\delta U}{\delta m} (x, m; z) (\delta_{y\pm\dn} -\delta_y)(dz) \\
		& = \frac{\delta U}{\delta m} (x,m; y\pm \dn) - \frac{\delta U}{\delta m} (x,m; y) 
	\end{split}
	\ee 
	and hence
	\be 
	\lim_n \frac{\partial_{m_{y\pm\dn}} U^n(x,m) - \partial_{m_y} U^n(x,m)}{\dn} = \pm \partial_y \frac{\delta U}{\delta m}(x,m;y).
	\ee
	Therefore the terms in \eqref{14}-\eqref{15} give  
	\be 
	\begin{split}
		\approx &- \int_\T m(dy) \left[ -\partial_p H_{\uparrow}(y,\partial_x U^n(y,m)) \partial_y \frac{\delta U}{\delta m}(x,m;y) - \partial_p H_{\downarrow}(y,\partial_x U^n(y,m)) \partial_y \frac{\delta U}{\delta m}(x,m;y)\right]  \\
		&= \int_\T m(dy) \partial_pH(y,\partial_x U(y,m)) \partial_y \frac{\delta U}{\delta m}(x,m;y) =- \left \langle \partial_x(\partial_pH(\cdot,\partial_x U(\cdot,m)) m),\frac{\delta U}{\delta m}(x,m;\cdot) \right \rangle.
	\end{split}
	\ee 
	while the term in \eqref{16} yields 
	\be 
	\begin{split}
		&-\sigma \int_\T m(dy) \frac{ \frac{\delta U}{\delta m} (x,m; y+ \dn) 
			- 2\frac{\delta U}{\delta m}(x,m;y) +\frac{\delta U}{\delta m} (x,m; y- \dn)}{ \dn^n } \\
		& \rightarrow - \sigma \int_\T m(dy) \partial^2_y \frac{\delta U}{\delta m}(x,m;y) = -\sigma \left\langle \partial_{xx}m,\frac{\delta U}{\delta m}(x,m;\cdot) \right \rangle.
	\end{split}
	\ee
	This provides the remaining terms in \eqref{master}. 
	
	As far as convergence of the trajectories is concerned, we study it by means of the generators. By \eqref{gen:n} we have
	\begin{align*}
		\mathcal{L}^n \phi(x) &\approx -\partial_pH_{\uparrow}(x,\partial_x U(x,\mu_t)) \partial_x \phi(x) - \partial_p H(x,\partial_x U(x,\mu_t))\partial_x \phi(x) +\sigma \partial_{xx} \phi(x) \\
		&= -\partial_pH(x,\partial_x U(x,\mu_t)) \partial_x \phi(x) 
		+ \sigma\partial_{xx} \phi(x), 
	\end{align*} 
	which is the generator of the limiting dynamics, yielding the convergence in distribution of the optimal processes.

	\section{Convergence results in the absence of a common noise}
	\label{sec:3}

	We prove convergence, with a convergence rate, of the discrete mater equation \eqref{master:n} to \eqref{master} and then of the related optimal trajectories, first in case  \eqref{master} admits a smooth solution and then in case there is no such solution. 
	
	The monotonicity \eqref{mono} of $f$ and $g$ implies that both \eqref{master} and \eqref{master:n} admit at most one classical solution. It also implies the uniqueness of solutions of the systems of characteristics such as \eqref{MFGsystem} or the discrete system
	\be\label{mfgsystem:n}
	\begin{split}
		&-\frac{d}{dt} u^n +H_{\uparrow}(x,\Delta_+^n u^n(x)) +H_{\downarrow}(x,-\Delta_-^n u^n(x)) - \sigma\Delta_2^n u^n(x) =f(\mu^n_t)(x) , \qquad t \in (s,T), x\in S^n,\\
		&\frac{d}{dt}\mu^n  - \sigma \Delta^n_2\mu^n + \Delta_-^n(\partial_pH_{\uparrow}(x,\Delta_+^nu^n(x))\mu^n)\\
		&\qquad - \Delta_+^n(\partial_pH_{\downarrow}(x,-\Delta_-^nu^n(x))\mu^n) = 0, \qquad t \in (s,T), x \in S^n,\\
		& u^n(T,x) = g(\mu^n_T)(x) ; \qquad \mu^n(s) = m^n,
	\end{split}
	\ee
	%or
	%\be\label{mfgsystem:n2}
	%\begin{split}
	%-\frac{d}{dt} u^n& -\sigma \Delta^n_2 u^n(x) +H(x,\Delta^n u^n(x))  =f(\mu^n_t)(x) , \qquad t \in (s,T), x\in S^n,\\
	%\frac{d}{dt}\mu^n&(t,x) - \sigma \Delta^n_2\mu^n(t,x) - \Delta^n(\partial_pH(x,\Delta^nu^n(x))\mu^n(t,x)) = 0, \qquad t \in (s,T), x \in S^n,\\
	% &u^n(T,x) = g(\mu^n_T)(x) ; \mu^n(s) = \tilde{\mu^n}.
	%\end{split}
	%\ee
	We first show a uniform bound which is used throughout  the paper. 
	
	\begin{lem}\label{lemma:lip}
		Let $(u^n,\mu^n)$ be a solution of \eqref{mfgsystem:n} for given initial time $t_0 \in [0,T]$.   This solution satisfies
		\be
		\label{bound:U}
		\sup_{x \in S^n} |u^n(s,x)| \leq (T-s)( \|f\|_\infty + \sup_x|\inf_{\alpha}L(x,\alpha)|)+ \|g\|_\infty .
		\ee 
		and then there exists $M > 0$ such that for any $s \in [0,T], \tilde{\mu^n} \in \mathcal{P}(S^n), n \geq 1$ and $x \in S^n$
		\be 
		\label{Lipx:Un}
		|\Delta_{\pm}^n u^n(s,x)|  \leq M.
		\ee 
	\end{lem}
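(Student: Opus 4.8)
The plan is to prove the two bounds by separate but related maximum-principle arguments: \eqref{bound:U} by comparison of the discrete HJB in \eqref{mfgsystem:n} with spatially constant barriers, and \eqref{Lipx:Un} by a discrete Bernstein-type estimate for the finite difference $\Delta_+^n u^n$, in which the non-degeneracy $\sigma>0$ plays the essential role.

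For \eqref{bound:U}, I would use that the scheme \eqref{mfgsystem:n} is monotone --- its off-diagonal couplings are exactly those of the generator \eqref{gen:n} of a Markov chain --- so it satisfies a comparison principle, and it suffices to exhibit barriers. Set $\psi(t):=\|g\|_\infty+(T-t)C$ with $C:=\|f\|_\infty+\sup_x|H(x,0)|$, and check that the constant-in-$x$ function $\psi$ is a supersolution: since $\Delta^n_\pm\psi=\Delta^n_2\psi=0$, inserting $\psi$ leaves $-\psi'(t)+H_\uparrow(x,0)+H_\downarrow(x,0)=C+H(x,0)\ge\|f\|_\infty\ge f(\mu^n_t)(x)$, where I used the identity $H_\uparrow(x,0)+H_\downarrow(x,0)=H(x,0)$. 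Symmetrically $-\psi$ is a subsolution, so $|u^n|\le\psi$. The precise constant then follows from $H=L^*$, which gives $H(x,0)=\sup_\alpha(-L(x,\alpha))=-\inf_\alpha L(x,\alpha)$, hence $\sup_x|H(x,0)|=\sup_x|\inf_\alpha L(x,\alpha)|$. Equivalently, one can track $t\mapsto\max_x u^n(t,x)$ and note that at a spatial maximizer $x^*$ one has $\Delta^n_2 u^n(x^*)\le 0$, while, since $\partial_pH_\uparrow\le 0\le\partial_pH_\downarrow$ (from $\alpha^n_\pm\ge 0$ in \eqref{alpha:feedback}) together with $\Delta^n_+u^n(x^*)\le 0$ and $-\Delta^n_-u^n(x^*)\ge 0$, the two Hamiltonian terms are bounded below by $H_\uparrow(x^*,0)+H_\downarrow(x^*,0)=H(x^*,0)$.

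For the Lipschitz bound I would set $w:=\Delta_+^n u^n$, noting that $\Delta_-^n u^n(x)=-w(x-\Delta x_n)$, so a bound on $w$ controls both one-sided differences, and derive the equation for $w$ by applying the linear operator $\Delta_+^n$ to the HJB in \eqref{mfgsystem:n}. As $\Delta_+^n$ commutes with $\tfrac{d}{dt}$ and with $\Delta_2^n$, the principal part is preserved, and a discrete chain rule splits the Hamiltonian contributions into a transport part $\partial_pH_{\uparrow/\downarrow}\cdot(\text{increments of }w)$ and a source arising from the explicit $x$-increments of $H_\uparrow,H_\downarrow$ and of $f$. Evaluating at a spatial extremizer of $w(t,\cdot)$ at each time, the dissipative term $\sigma\Delta_2^n w$ and the transport term both carry a favourable sign, the $\Delta_+^n f$ source is controlled by $\mathrm{Lip}_x f$, and the terminal datum is bounded uniformly in $n$ because $g$ is bounded in $\mathcal{C}^{2+\gamma}$, whence $|\Delta_+^n g(\mu^n_T)|\le\|g\|_{2+\gamma}$. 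A Gronwall argument in time on $\max_x|w(t,\cdot)|$ would then give \eqref{Lipx:Un} with $M$ independent of $n$, $s$ and $\tilde{\mu^n}$.

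The hard part is the source produced by the explicit spatial dependence of $H_\uparrow,H_\downarrow$: by the envelope identity $\partial_xH_\downarrow(x,p)=-\partial_xL(x,-\alpha^*(x,p))+\partial_xL(x,0)$, and its analogue for $H_\uparrow$, this term grows \emph{quadratically} in $|w|$ (for instance like $c'(x)\,p^2/c(x)^2$ when $L=\tfrac12 c(x)\alpha^2$), so a naive differential inequality would only yield finite-time blow-up. This is precisely where non-degeneracy enters: one must keep the dissipation from $\sigma\Delta_2^n w$ --- in the spirit of the Bernstein method, working with $|w|^2$ and exploiting the coercivity of the discrete Laplacian together with the uniform convexity of $L$ on compact sets --- in order to absorb the quadratic source before closing the Gronwall estimate. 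The further difficulty is that $H_\uparrow,H_\downarrow$ are only $\mathcal{C}^1$ and not uniformly convex, so this absorption has to be performed through $L$ and the structure of its minimizers rather than through second derivatives of the Hamiltonians, which is the delicate technical point of the proof.
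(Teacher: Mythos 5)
Your proof of \eqref{bound:U} is correct, and it takes the analytic counterpart of the paper's route: the paper compares $u^n$, viewed as a value function, with the cost of the constant control minimizing $L(x,\cdot)$, while you verify that the constants $\pm\big(\|g\|_\infty+(T-t)(\|f\|_\infty+\sup_x|H(x,0)|)\big)$ are barriers for the quasi-monotone ODE system \eqref{mfgsystem:n}; since $H(x,0)=-\inf_\alpha L(x,\alpha)$, both give the stated constant.

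Your proof of \eqref{Lipx:Un}, however, has a genuine gap, and it is exactly the point you defer as ``the delicate technical point'': that point is the entire difficulty, and the mechanism you propose for it cannot work as stated. Concretely, set $w=\Delta^n_+u^n$ and $z=w^2$. The discrete identity $\Delta_2^n(w^2)=2w\,\Delta_2^n w+(\Delta^n_+w)^2+(\Delta^n_-w)^2$ shows that, at a spatial maximizer of $z$, the dissipation contributes the sign-favourable quantity $\sigma\big[(\Delta^n_+w)^2+(\Delta^n_-w)^2\big]$; but this quantity admits no lower bound there --- it vanishes identically when $w$ is flat near its maximum --- whereas the source coming from $\partial_xH_\uparrow,\partial_xH_\downarrow$ is of order $|w|^2$, hence of order $|w|^3$ after multiplication by $2w$. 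What survives is a Riccati-type differential inequality for $\max_x z$, which (as you yourself note) blows up in finite time and cannot be closed by Gronwall. The classical ways around this (weak Bernstein method with structure conditions on $\partial_xH$, semiconcavity estimates from uniform convexity of $H$, working with $|u_x|^2+\lambda u^2$, etc.) are all blocked here precisely because, as the paper stresses, $H_\uparrow$ and $H_\downarrow$ are only $\mathcal{C}^1$ and not uniformly convex; you acknowledge this but supply no substitute mechanism. So the estimate \eqref{Lipx:Un}, which is the main content of the lemma, is not proved.

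The paper's proof avoids this difficulty entirely, and in particular your premise that the non-degeneracy $\sigma>0$ ``plays the essential role'' is not what happens: no PDE for $w$ is ever written, and $\sigma>0$ is never used in this lemma. Instead, one uses the Poisson-random-measure representation \eqref{dyn:Xn} of the controlled chain, which allows stochastic open-loop controls. Take the optimal control for the problem started at $x$ (the feedback \eqref{alpha:feedback} evaluated along its own trajectory, viewed as an open-loop process) and run the dynamics from both $x$ and $x+\dn$ with this \emph{same} control process: since the jump times and jump sizes are then identical for the two chains, they remain at distance exactly $\dn$ for all times. Suboptimality of this control for the problem started at $x+\dn$, together with the Lipschitz continuity in $x$ of $L$, $f$ and $g$, gives $u^n(t_0,x+\dn)-u^n(t_0,x)\leq M\dn$, and exchanging the roles of $x$ and $x+\dn$ gives the reverse inequality. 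This coupling argument is short, uniform in $n$, and requires neither convexity, nor differentiated equations, nor $\sigma>0$; if you want a purely PDE proof you would need a genuinely different device from the one you sketch.
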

	
	\begin{proof}
		Recall that $u^n$ is the value function of an optimal control problem for the dynamics \eqref{rate:n} and cost \eqref{cost:n} (given $\mu^n$). The lower bound in \eqref{bound:U} is then straightforward, while the upper bound follows by choosing the feedback controls $\alpha_+(x) $ which minimizes the function $a \mapsto L(x,a)$ for $a\geq 0$, and  $- \alpha_-(x) $ which minimizes $a \mapsto L(x,a)$ for $a\leq 0$.  
		In order to prove \eqref{Lipx:Un}, 
		it is convenient to use stochastic open-loop controls for the control problem and thus to consider a probabilistic representation of the dynamics of $X^n$. We now introduce the representation analyzed in \cite{cecchinfischer}.

		Let  $\mathcal{N}$ be a Poisson random measure on $[0,T] \times [0,\infty)^2$ with intensity measure $\nu(d\theta)$ on $[0,\infty)^2$ given by 
		\[
		\nu(E) = \ell( E \cap ( [0,\infty) \times \{0\} ) )
		+ \ell( E \cap (\{0\} \times [0,\infty)) ),
		\]
		where $\ell$ is the Lebesgue measure on $\R$. The measure $\nu$ is in fact the sum of the intersection with the axes and has the property that 
		\[
		\int_{[0,\infty)^2} \phi( \theta_+, \theta_-) \nu (d\theta) = 
		\int_0^\infty \phi(\theta_+, 0) d \theta_+ 
		+ \int_0^\infty \phi(0, \theta_-) d\theta_- .
		\]
		Consider then the dynamics 
		\be 
		\label{dyn:Xn}
		dX^n_t = \int_{[0,\infty)^2} \bigg(\dn \mathbbm{1}_{\left(0, \frac{\sigma}{\dn^2} + \frac{\alpha^n_+(t,X^n_t)}{2 \dn} \right]} (\theta_+) 
		- \dn \mathbbm{1}_{\left(0, \frac{\sigma}{\dn^2} + \frac{\alpha^n_-(t,X^n_t)}{2 \dn} \right]} (\theta_-)  \bigg) \mathcal{N}(d\theta, dt)
		\ee 
		for a control $(\alpha^n_+(t,x),\alpha^n_-(t,x)$. Following \cite{cecchinfischer}, we get that the generator is given by (calling $\lambda(\alpha_+,\alpha_-,\theta)$ the integrand above)
		\[
		\int_{[0,\infty)^2} \left[\phi(x+ \lambda(\alpha_+(t,x),\alpha_-(t,x),\theta)) -\phi(x) \right]  \nu(d\theta)  
		= \Delta^n_+ \phi(x) \alpha_+(t,x) + \Delta^n_-\phi(x)\alpha_-(t,x)+ \sigma \Delta^n_2 \phi(x) ,
		\] 
		which ensures that $X^n$ has the transition rates as in \eqref{rate:n}. The advantage in using the representation \eqref{dyn:Xn} is that it permits to use stochastic open-loop controls (for the strong formulation), which are predictable stochastic processes (with respect to the filtration generated by the fixed Poisson measure and the initial condition). 
		
		Therefore we have (recall that $(\mu^n_t)_{t_0 \leq t\leq T}$ is fixed)
		\[
		u^n(t_0,x) = \inf_{\alpha^n} \E \left[ \int_{t_0}^T L(X^n_t,(\alpha^n_+)_t) + L(X^n_t,-(\alpha^n_-)_t)- L(X^n_t,0) +f( \mu^n_t)(X^n_t) dt + g(\mu^n_T)(X^n_T) \right],
		\]
		where $X^n$ starts at $X^n_{t_0} = x$ and uses the stochastic control $(\alpha^n_+,\alpha^n_-)$.
		%Then we derive immediately the  bound \eqref{bound:U} by taking $(\alpha^n_+,\alpha^n_-)_t$ which minimizes the function $L(X^n_t,\alpha_+) + L(X^n_t,\alpha_-)$. 
		As to \eqref{Lipx:Un}, if $(\alpha^n_+,\alpha^n_-)_t$ is optimal for $x$ (that is given by \eqref{alpha:feedback}) and we denote by $X^n$ the process starting at $x$ and by $\tilde{X}^n$ the process starting at $x+\dn$, both with the control $(\alpha^n_+,\alpha^n_-)_t$, then 
		\begin{align*}
			&u(t_0, x+\dn) - u(t_0, x) \\ 
			&\leq \E \left[ \int_{t_0}^T L(\tilde{X}^n_t,(\alpha^n_+)_t) + L(\tilde{X}^n_t,-(\alpha^n_-)_t)- L(\tilde{X}^n_t,0)+f( \mu^n_t)(\tilde{X}^n_t) dt + g(\mu^n_T)(\tilde{X}^n_T) \right]\\
			&\quad - \E \left[ \int_{t_0}^T L(X^n_t,(\alpha^n_+)_t)+ L(X^n_t,-(\alpha^n_-)_t)- L(X^n_t,0) +f( \mu^n_t)(X^n_t) dt + g( \mu^n_T)(X^n_T) \right] \\
			& \leq M \sup_{t_0 \leq t\leq T} \E |\tilde{X}^n_t -X^n_t|,
		\end{align*} 
		where we have used the regularity of $L$ in its first variable. We conclude by noticing that $\tilde{X}^n_t -X^n_t = \dn$ for any $t$, because all the other terms cancel. By changing the roles of $x$ and $x+\dn$, we obtain the opposite inequality and hence \eqref{Lipx:Un} follows. 
	\end{proof}
	Clearly this estimate translates directly to the solution of the master equation thanks to its representation by the characteristics.

	\subsection{Classical solutions} 
	\label{sec:3.1}
	
	We first prove convergence, with a convergence rate, in cases in which \eqref{master} and \eqref{master:n} admit a classical solution. Results on the existence of such solutions are given in \cite[thm. 2.4.2]{cardaliaguet2019master} for the continuous master equation and in \cite{bayraktarcohen, cecchin2019b} for the discrete master equation, assuming regularity of $f$ and $g$ in the measure argument.
	
	The first preliminary result states that a smooth solution of the continuous master equation, computed on discrete measures, almost solves the discrete master equation. 
	
	\begin{prop}
		If $U$ is the classical solution to \eqref{master} then 
		$V^n(t,x, m) := U(t,x, \sum_{j=1}^n m_j \delta_{x_j})$ solves 
		\be 
		\label{Vn}
		\begin{split}
			&-\partial_t V^n(x,m)   +H_{\uparrow}(x,\Delta^n_+ V^n(x,m))  + H_{\downarrow}(x,-\Delta^n_-V^n(x,m))- \sigma\Delta_2^n V^n(x,m) -f(m)(x) \\
			& \qquad + \sum_{y\in S^n} m_y  \frac{\partial_p H_{\uparrow}(u,\Delta^n_+ V^n(y,m))}{\dn} \left( \partial_{m_{y+\dn}}V^n(x,m) - \partial_{m_{y}} V^n(x,m)\right) \\
			& \qquad +\sum_{y\in S^n} m_y  \frac{-\partial_p H_{\downarrow}(u,-\Delta^n_- V^n(y,m))}{\dn} \left( \partial_{m_{y-\dn}}V^n(x,m) - \partial_{m_{y}} V^n(x,m)\right)\\
			& \qquad - \sum_{y\in S^n} m_y \frac{\sigma}{\dn^2}  \left( \partial_{m_{y+\dn}} V^n(x,m) - 2\partial_{m_y} V^n(x,m) +\partial_{m_{y-\dn}} V^n(x,m)\right) = r^n(t,x,m),
		\end{split}
		\ee
		with $|r^n(t,x,m)| \leq C \omega(\frac1n)$, where $\omega$ is a modulus of continuity of $\partial_x U$, $\partial_{xx} U$, $\frac{\delta U}{\delta m}$, $\partial_y  \frac{\delta U}{\delta m}$ and $\partial_{yy}  \frac{\delta U}{\delta m}$.
	\end{prop}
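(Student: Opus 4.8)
The plan is a term-by-term consistency analysis. I would substitute $V^n(t,x,m) = U(t,x, \sum_j m_j \delta_{x_j})$ into the discrete master operator appearing on the left-hand side of \eqref{Vn}, and show that each discrete finite-difference block reproduces the corresponding block of the continuous master equation \eqref{master} up to an error governed by a modulus of continuity of a suitable derivative of $U$. Since $U$ solves \eqref{master} identically, these blocks sum to zero in the continuous limit, and the accumulated discrepancy is exactly $r^n$.

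The easy pieces come first. The time derivative is exact, $\partial_t V^n(t,x,m) = \partial_t U(t,x,m)$, and the local coupling $f$ is common to both equations. For the $x$-derivatives I would Taylor-expand the smooth map $x \mapsto U(t,x,m)$: writing $\Delta_+^n V^n - \partial_x U$ as $\tfrac1{\dn}\int_0^{\dn}(\partial_x U(x+s,m)-\partial_x U(x,m))\,ds$ gives $\Delta_\pm^n V^n(x,m) = \pm\partial_x U + O(\omega(\dn))$ with $\omega$ a modulus of $\partial_x U$, and similarly $\Delta_2^n V^n(x,m) = \partial_{xx} U + O(\omega(\dn))$ with $\omega$ a modulus of $\partial_{xx}U$. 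Feeding these into $H_\uparrow(x,\cdot)+H_\downarrow(x,\cdot)$ and invoking the identity $H_\uparrow(x,p)+H_\downarrow(x,p)=H(x,p)$ together with the local Lipschitz regularity of $H_\uparrow,H_\downarrow$ (the arguments stay in a fixed compact set because $\partial_x U$ is bounded on the torus), the first line of \eqref{Vn} matches $H(x,\partial_x U)-\sigma\partial_{xx}U$ up to $O(\omega(\dn))$.

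The main work is on the measure-derivative terms. The crucial identity, already recorded in the heuristic derivation, is $\partial_{m_{y\pm\dn}} V^n(x,m) - \partial_{m_y} V^n(x,m) = \frac{\delta U}{\delta m}(t,x,m;y\pm\dn) - \frac{\delta U}{\delta m}(t,x,m;y)$, valid because we only differentiate along tangent directions $e_j-e_i$, so the normalization constant of the flat derivative cancels. I would then Taylor-expand the smooth map $z \mapsto \frac{\delta U}{\delta m}(t,x,m;z)$ at $z=y$: the first difference divided by $\dn$ equals $\partial_y\frac{\delta U}{\delta m}$ up to $O(\omega(\dn))$ with $\omega$ a modulus of $\partial_y\frac{\delta U}{\delta m}$, and the symmetric second difference divided by $\dn^2$ equals $\partial_{yy}\frac{\delta U}{\delta m}$ up to $O(\omega(\dn))$ with $\omega$ a modulus of $\partial_{yy}\frac{\delta U}{\delta m}$. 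Replacing $\Delta_+^n V^n(y,m)$ and $-\Delta_-^n V^n(y,m)$ inside $\partial_pH_\uparrow,\partial_pH_\downarrow$ by $\partial_x U(t,y,m)$ (again an $O(\omega(\dn))$ change by Lipschitzness of $\partial_p H_{\uparrow},\partial_p H_{\downarrow}$) and using $\partial_pH_\uparrow+\partial_pH_\downarrow=\partial_pH$, the two drift sums in \eqref{Vn} converge to $\int m(dy)\,\partial_pH(y,\partial_x U)\,\partial_y\frac{\delta U}{\delta m}$ while the $\tfrac{\sigma}{\dn^2}$-sum converges to $-\sigma\int m(dy)\,\partial_{yy}\frac{\delta U}{\delta m}$; after the integrations by parts on the torus that identify these with $-\langle \sigma\partial_{xx}m + \partial_x(\partial_pH(\cdot,\partial_x U)m), \frac{\delta U}{\delta m}\rangle$, they reproduce the nonlocal term of \eqref{master}.

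Finally I would collect every error. Each is pointwise $O(\omega(\dn))$, and the measure terms are probability-weighted averages $\sum_y m_y(\cdots)$, so summing against $m$ (total mass one) does not amplify the bound despite the $O(n)$ grid points; this is precisely why the $\tfrac1{\dn}$ and $\tfrac1{\dn^2}$ prefactors do not spoil the estimate. Combining with the exact cancellation coming from $U$ solving \eqref{master} yields $|r^n(t,x,m)| \le C\omega(1/n)$. I expect the only genuinely delicate point to be the $\tfrac{\sigma}{\dn^2}$ term: it requires second-order consistency of the second difference of $z\mapsto \frac{\delta U}{\delta m}(t,x,m;z)$, so one must make sure the modulus of continuity of $\partial_{yy}\frac{\delta U}{\delta m}$ is available (it is, by the list in the statement) and that the identification of finite-dimensional second differences of $\partial_{m_j}V^n$ with $\partial_{yy}\frac{\delta U}{\delta m}$ is exact up to that modulus.
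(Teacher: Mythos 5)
Your proposal is correct and takes essentially the same approach as the paper: the paper's own proof is a one-line statement that, given the assumed regularity of $U$, the heuristic derivation of Section \ref{sec:1.2} holds rigorously with an error driven by the modulus $\omega$. Your term-by-term consistency analysis---the tangent-direction identity $\partial_{m_{y\pm\dn}}V^n - \partial_{m_y}V^n = \frac{\delta U}{\delta m}(\cdot;y\pm\dn) - \frac{\delta U}{\delta m}(\cdot;y)$, the Taylor expansions with the listed moduli of continuity, and the observation that summing pointwise $O(\omega(1/n))$ errors against the probability weights $m_y$ does not amplify them---is precisely that heuristic computation carried out in full detail.
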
 
	
	\begin{proof}
		The proof of this statement follows from the fact that assuming this regularity on $U$, all the heuristics of section \ref{sec:1.2} are true with a modulus of convergence driven by $\omega$.
	\end{proof}

	\begin{thm}
		\label{thm:conv:Un}
		Let $U$ be a classical solution to \eqref{master} and $U^n$ be a classical solution to \eqref{master:n}. 
		There exists a constant $C$ (independent of $n$) such that, for $V^n$ defined as in the previous result,
		\begin{align}
			&|U^n(t,x,m) - V^n(t,x,m)| \leq C \omega\left(\frac1n \right), \qquad 
			\forall t\in [0,T],  x\in S^n, m\in \mathcal{P}(S_n) \label{conv:Un}\\
			&\E \int_0^T | \Delta_+^n (U^n-V^n) (t, X^n_t , \Law(X^n_t))|^2 
			+| \Delta_-^n (U^n-V^n) (t, X^n_t , \Law(X^n_t))|^2  dt 
			\leq C \omega^2\left(\frac1n \right) ,
			\label{conv:Deltan}
		\end{align}
		where $X^n_t$ is the optimal process of the MFG \eqref{rate:n}-\eqref{cost:n}.
	\end{thm}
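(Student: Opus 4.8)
The plan is to prove both estimates simultaneously through the systems of characteristics, using that \eqref{mfgsystem:n} is the characteristic system of \eqref{master:n} and that, by the preceding Proposition, the continuous solution evaluated on discrete measures solves the \emph{same} system up to the residual $r^n$. Fix the initial datum $m$ at time $t_0$. Let $(\bar u,\bar\mu)$ solve \eqref{mfgsystem:n} with $\bar\mu(t_0)=m$, so that $U^n(s,\cdot,\bar\mu_s)=\bar u(s,\cdot)$ and $\bar\mu_s=\Law(X^n_s)$ is the equilibrium flow. Let $\hat\mu$ be the flow generated by the optimal drifts of $V^n$, namely by $\partial_pH_{\uparrow}(\Delta_+^nV^n)$ and $\partial_pH_{\downarrow}(-\Delta_-^nV^n)$, with $\hat\mu(t_0)=m$, and set $\hat u(s,\cdot):=V^n(s,\cdot,\hat\mu_s)$. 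By the preceding Proposition the pair $(\hat u,\hat\mu)$ solves \eqref{mfgsystem:n} except that its discrete HJB equation carries the extra source $r^n$, with $\|r^n\|_\infty\le C\omega(1/n)$. Since $U^n(t_0,x,m)=\bar u(t_0,x)$ and $V^n(t_0,x,m)=\hat u(t_0,x)$, the bound \eqref{conv:Un} reduces to controlling $|\bar u-\hat u|(t_0,x)$.

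The core is the duality identity obtained by differentiating $s\mapsto\langle \bar u-\hat u,\bar\mu-\hat\mu\rangle$ (the sum over $S^n$). Inserting the two HJB equations and the two Fokker–Planck equations and summing by parts — licit because \eqref{FP:n} is exactly the adjoint of \eqref{gen:n} and the periodic boundary produces no boundary term — the $\sigma\Delta_2^n$ contributions cancel and the first–order terms regroup into convexity defects. One is led to
\[
\langle g(\bar\mu_T)-g(\hat\mu_T),\bar\mu_T-\hat\mu_T\rangle+\int_{t_0}^T\!\Big(\langle f(\bar\mu_s)-f(\hat\mu_s),\bar\mu_s-\hat\mu_s\rangle+\mathcal{C}_s\Big)\,ds=\int_{t_0}^T\!\langle r^n,\bar\mu_s-\hat\mu_s\rangle\,ds,
\]
where $\mathcal{C}_s\ge0$ collects the convexity defects of $H_{\uparrow}$ in the variable $\Delta_+^n$ and of $H_{\downarrow}$ in $-\Delta_-^n$, weighted by $\bar\mu_s$ and $\hat\mu_s$. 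The monotonicity assumptions \eqref{mono} make the terminal and the $f$–term nonnegative, and $(\bar\mu-\hat\mu)(t_0)=0$ kills the lower endpoint; hence $\int_{t_0}^T\mathcal{C}_s\,ds\le\int_{t_0}^T\langle r^n,\bar\mu_s-\hat\mu_s\rangle\,ds\le C\omega(1/n)\,\sup_s W_1(\bar\mu_s,\hat\mu_s)$.

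Turning $\mathcal{C}_s$ into genuine $L^2$–control of $\Delta_\pm^n(\bar u-\hat u)$ is the delicate point, since $H_{\uparrow}$ and $H_{\downarrow}$ are convex but \emph{not} uniformly convex, so neither defect alone gives a quadratic lower bound. Here I exploit the algebraic identity $H_{\uparrow}+H_{\downarrow}=H$: the convexity defect of $H_{\uparrow}$ plus that of $H_{\downarrow}$, taken at a common argument, equals the convexity defect of $H$, and Lemma \ref{lemma:lip} confines all finite differences to a fixed compact set on which $H$ is uniformly convex, so the sum dominates $c|\Delta_+^n(\bar u-\hat u)|^2$. The two defects, however, sit at adjacent nodes because $-\Delta_-^nu(x)=\Delta_+^nu(x-\dn)$; recombining them after an index shift requires the masses $\bar\mu_s(x)$ and $\bar\mu_s(x+\dn)$ to be comparable, and this is precisely where the non–degenerate noise enters, the Laplacian in \eqref{FP:n} yielding a uniform–in–$n$ lower bound (a discrete Harnack/heat–kernel estimate) on the density of $\bar\mu_s$. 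Together these give $\int_{t_0}^T\langle\bar\mu_s,|\Delta_\pm^n(\bar u-\hat u)|^2\rangle\,ds\lesssim\int_{t_0}^T\mathcal{C}_s\,ds\lesssim\omega(1/n)\,\sup_sW_1(\bar\mu_s,\hat\mu_s)$.

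I then close by a bootstrap and transfer the result to $U^n-V^n$. A stability estimate for \eqref{FP:n}, using that $\partial_pH_{\uparrow},\partial_pH_{\downarrow}$ are Lipschitz on the compact set of Lemma \ref{lemma:lip}, bounds $\sup_sW_1(\bar\mu_s,\hat\mu_s)$ by the square root of that weighted $L^2$ norm; inserting this into the previous display yields both $\sup_sW_1(\bar\mu_s,\hat\mu_s)\lesssim\omega(1/n)$ and $\int_{t_0}^T\langle\bar\mu_s,|\Delta_\pm^n(\bar u-\hat u)|^2\rangle\,ds\lesssim\omega(1/n)^2$, the quadratic gain stemming exactly from the fact that $\langle r^n,\bar\mu-\hat\mu\rangle$ is a product of two small factors. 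Then \eqref{conv:Un} follows from the optimal–control representation of $\bar u$ and $\hat u$ as in Lemma \ref{lemma:lip}: their values differ by at most the supremum over trajectories of the data gap $\|f(\bar\mu_s)-f(\hat\mu_s)\|_\infty+\|r^n\|_\infty+\|g(\bar\mu_T)-g(\hat\mu_T)\|_\infty=O(\omega(1/n))$, by the $W_1$–Lipschitz continuity of $f,g$ and the bound on $W_1(\bar\mu,\hat\mu)$. For \eqref{conv:Deltan} I observe that along the equilibrium flow $U^n(s,\cdot,\bar\mu_s)-V^n(s,\cdot,\bar\mu_s)=\bar u(s,\cdot)-U(s,\cdot,\bar\mu_s)$, and I pass from $\hat u=U(s,\cdot,\hat\mu_s)$ to $U(s,\cdot,\bar\mu_s)$ by writing the difference of finite differences as $\int_0^1\!\int_\T\partial_x\frac{\delta U}{\delta m}\,d(\bar\mu_s-\hat\mu_s)$ and using $W_1(\bar\mu_s,\hat\mu_s)\lesssim\omega(1/n)$, which costs a further $O(\omega(1/n)^2)$ in $L^2$. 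The technical heart of the argument is thus the combination of the identity $H_{\uparrow}+H_{\downarrow}=H$ with the noise–induced lower bound on the density (uniform in the vanishing mesh) and the $W_1$–versus–$L^2$ bootstrap that extracts the rate $\omega(1/n)^2$.
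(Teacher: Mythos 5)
Your overall architecture (characteristics plus Lasry--Lions duality plus a $W_1$-versus-$L^2$ bootstrap) is coherent, and several of its pieces (the value-function comparison giving \eqref{conv:Un} from a $W_1$ bound, the Gr\"onwall stability estimate for the flows, the final transfer using $\partial_y\frac{\delta U}{\delta m}$) do mirror arguments that appear elsewhere in the paper (in the proof of Theorem \ref{thm:conv5}). But there is a genuine gap at the step you yourself flag as ``the delicate point'': converting the convexity defects $\mathcal{C}_s$ into the weighted control $\int\langle\bar\mu_s,|\Delta^n_\pm(\bar u-\hat u)|^2\rangle\,ds$. Since $H_\uparrow$ and $H_\downarrow$ are not uniformly convex (Remark \ref{rem:quadratic}), you must recombine the defect of $H_\uparrow$ at node $x$ with that of $H_\downarrow$ at node $x+\dn$, and these carry the weights $\bar\mu_s(x)$ and $\bar\mu_s(x+\dn)$ respectively. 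Your argument therefore needs a uniform-in-$n$ lower bound on the ratio of adjacent masses. This ``discrete Harnack/heat-kernel estimate'' is nowhere proved (neither by you nor in the paper), and in the form you need it is false: the theorem is claimed for \emph{every} $m\in\mathcal{P}(S^n)$ and for the time integral starting at $t_0=0$, and if $\bar\mu_{t_0}$ is (close to) a Dirac mass, then for $s-t_0\ll \dn^2$ one has $\bar\mu_s(x_0+\dn)/\bar\mu_s(x_0)=O((s-t_0)/\dn^2)\to 0$, so the comparability constant degenerates near the initial time. Excising a short initial layer and bounding the integrand by Lemma \ref{lemma:lip} costs $O(\delta)$ with $\delta\gtrsim 1/n$, which destroys the claimed rate $\omega^2(1/n)$ in \eqref{conv:Deltan} whenever $\omega^2(1/n)\ll 1/n$ (e.g.\ the Lipschitz case $\omega(s)=Cs$). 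A secondary, fixable but unaddressed issue in the same step: after the index shift the two defects involve $H_\uparrow(x,\cdot)$ and $H_\downarrow(x+\dn,\cdot)$, whose sum is $H(x,\cdot)$ only up to an $x$-mismatch; controlling this costs an extra error of the form $C\dn|p-q|$, which must be absorbed by Young's inequality.

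For contrast, the paper's proof avoids this obstruction entirely and uses neither monotonicity \eqref{mono} nor any convexity recombination. It applies It\^o's formula to $|W^n_t|^2$ with $W^n=U^n-V^n$ evaluated along the discrete optimal process $X^n_t$ and its flow $\mu^n_t$, using the two master equations (that of $V^n$ with the residual $r^n$). The coercive terms $\sigma|\Delta^n_\pm W^n_s|^2$ then appear \emph{automatically}, already weighted by $\Law(X^n_s)$, as the quadratic-variation contribution of the jumps with rates $\sigma/\dn^2$; Young's inequality and Gr\"onwall give \eqref{eq:38}--\eqref{eq:39}, hence both \eqref{conv:Un} and \eqref{conv:Deltan} at once. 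This is exactly the mechanism by which the non-degeneracy of the noise is exploited there, and it is the ingredient your duality route cannot reproduce without the unproven (and, near $t_0$, failing) Harnack-type bound. If you want to salvage a duality-based proof, you would need to restructure it so that the coercivity is extracted from the martingale/jump structure rather than from the convexity defects of the half-Hamiltonians.
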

	
	Notably, in the proof below, we make no use of the monotonicity assumption, as we just use the fact that the solutions to the master equations are classical. We also do not need uniform in $n$ estimates on $U^n$, except for the one of Lemma \ref{lemma:lip}, but we make use of the non-degeneracy of the diffusion. The proof is inspired by the argument of \cite[Thm. 2.2.1]{cardaliaguet2019master} for the convergence of the $N$-player game, which is itself inspired by the stability argument for forward-backward SDEs. 
	Clearly, the best convergence rate in \eqref{conv:Un} is given by $\frac1n$ in case the derivatives of $U$ involved are Lipschitz-continuous.  
	
	\begin{proof}
		Consider any initial time $t_0\in [0,T)$ and distribution $\mu_0 = (\mu_{0,y})_{y\in S^n} \in \mathcal{P}(S^n)$ such that $\mu_{0,y}\neq 0$ for each $y\in S^n$, i.e. $\mu_0$ belongs to the interior of the simplex, and let $(X^n_t)_{t_0\leq t\leq T}$ the optimal process of the discrete MFG starting at $(t_0, \mu_0)$. Denote its law by $\big(\mu^n_t =\Law(X^n_t) \big)_{t_0\leq t\leq T}$. 
		%and, for such given $\mu^n$, 
		%and given an initial deterministic state $x_0\in S^n$, consider the optimal process $Y_t$ for the cost \eqref{cost:n2} starting at $(t_0, x_0)$; its optimal control is then $\alpha(t,x)= - \Delta^n U^n(t,x, \mu^n_t)$. 
		We denote $Z^n=U^n-V^n$, $Z^n_t =U^n_t-V^n_t =(U^n-V^n) (t,X^n_t,\mu^n_t)$, and 
		$r^n_t=r^n(t,X^n_t,\mu^n_t)$.
		For any $t\in [t_0,T]$, applying It\^o formula  to $|U^n-V^n|^2(t,X^n_t,\mu^n_t)$ and then taking conditional expectation with respect to the initial condition (denoted $\E_0$) and  using  equations \eqref{master:n} and \eqref{Vn} and standard algebraic manipulations, we obtain
		\begin{align*}
			&\E_0 |Z^n_T|^2 - \E_0 |Z^n_t|^2  
			=\E_0 \int_t^T \bigg[ \left(|Z^n (X^n_s + \dn, \mu^n_s)|^2 - |Z_s^n|^2 \right) \left( \frac{\sigma}{\dn^2} -\frac{\partial_pH_{\uparrow}(X^n_s,\Delta^n_+ U^n_s)}{\dn} \right) \\
			&\quad + \left( |Z^n (X^n_s - \dn,\mu^n_s)|^2 - |Z_s^n|^2 \right) \left( \frac{\sigma}{\dn^2} 
			+\frac{\partial_pH_{\downarrow}(X^n_s,-\Delta^n_- U^n_s)}{\dn} \right) \\
			& \quad + 2 Z_s^n  ( \partial_t U^n_s - \partial_t V^n_s ) \\
			& \quad + 2 Z_s^n ( D^m U^n - D^m V^n)(X^n_s, \mu^n_s) \cdot \frac{d}{dt}\mu_s^n \bigg] ds\\
			&= \E_0 \int_t^T \bigg[ \left( |Z^n  (X^n_s + \dn, \mu^n_s) - Z^n_s|^2 
			+ 2 Z^n_s ( Z^n  (X^n_s + \dn, \mu^n_s) - Z^n_s ) \right) \left( \frac{\sigma}{\dn^2} -\frac{\partial_pH_{\uparrow}(X^n_s,\Delta^n_+ U^n_s)}{\dn}  \right) \\
			&\quad + \left( |Z^n  (X^n_s - \dn, \mu^n_s) - Z^n_s|^2 + 2 Z^n_s ( Z^n  (X^n_s - \dn, \mu^n_s) - Z^n_s ) \right) \left(\frac{\sigma}{\dn^2} +\frac{\partial_pH_{\downarrow}(X^n_s,-\Delta^n_- U^n_s)}{\dn} \right) \\
			&\quad + 2 Z^n_s 
			\bigg(
			H_{\uparrow}(X^n_s,\Delta^n_+ U^n_s) + H_{\downarrow}(X^n_s,-\Delta^n_-U^n_s)
			- H_{\uparrow}(X^n_s,\Delta^n_+ V^n_s) - H_{\downarrow}(X^n_s,-\Delta^n_- V^n_s)
			- \sigma\Delta_2^n U^n_s +\sigma\Delta_2^n V^n_s  \\
			& \qquad + \sum_{y\in S^n} \mu^n_{s,y}  \frac{\partial_pH_{\uparrow}(y,\Delta^n_+ U^n(y,\mu^n_s))}{\dn} \left( \partial_{m_{y+\dn}}U^n_s - \partial_{m_{y}} U^n_s\right) \\
			&\qquad \qquad - \sum_{y\in S^n} \mu^n_{s,y}  \frac{\partial_pH_{\uparrow}(y,\Delta^n_+ V^n(y,\mu^n_s))}{\dn} \left( \partial_{m_{y+\dn}}V^n_s - \partial_{m_{y}} V^n_s\right)
			\\
			& \qquad - \sum_{y\in S^n} \mu^n_{s,y}  \frac{\partial_pH_{\downarrow}(y,-\Delta^n_- U^n(y,\mu^n_s))}{\dn} \left( \partial_{m_{y-\dn}}U^n_s - \partial_{m_{y}} U^n_s\right) \\
			&\qquad \qquad + \sum_{y\in S^n} \mu^n_{s,y}  \frac{\partial_pH_{\downarrow}(y,-\Delta^n_- V^n(y,\mu^n_s))}{\dn} \left( \partial_{m_{y-\dn}}V^n_s - \partial_{m_{y}} V^n_s\right)
			\\
			& \qquad - \sum_{y\in S^n} \mu^n_{s,y} \frac{\sigma}{\dn^2}  \left( \partial_{m_{y+\dn}} U^n_s - 2\partial_{m_y} U^n_s +\partial_{m_{y-\dn}} U^n_s\right) \\
			&  \qquad \qquad + \sum_{y\in S^n} \mu^n_{s,y} \frac{\sigma}{\dn^2}  \left( \partial_{m_{y+\dn}} V^n_s - 2\partial_{m_y} V^n_s +\partial_{m_{y-\dn}} V^n_s\right)
			\bigg) + r^n_s \\
			&\quad + 2 Z^n_s 
			\bigg( - \sum_{y\in S^n} \mu^n_{s,y}  \frac{\partial_pH_{\uparrow}(y,\Delta^n_+ U^n(y,\mu^n_s))}{\dn} \left( \partial_{m_{y+\dn}}U^n_s - \partial_{m_{y}} U^n_s\right) \\
			&\qquad \qquad +\sum_{y\in S^n} \mu^n_{s,y}  \frac{\partial_pH_{\downarrow}(y,-\Delta^n_- U^n(y,\mu^n_s))}{\dn} \left( \partial_{m_{y-\dn}}U^n_s - \partial_{m_{y}} U^n_s\right) \\
			&\qquad \qquad 
			+ \sum_{y\in S^n} \mu^n_{s,y}  \frac{\partial_pH_{\uparrow}(y,\Delta^n_+U^n(y,\mu^n_s))}{\dn} \left( \partial_{m_{y+\dn}}V^n_s - \partial_{m_{y}} V^n_s\right)
			\\
			&\qquad \qquad 
			- \sum_{y\in S^n} \mu^n_{s,y}  \frac{\partial_pH_{\downarrow}(y,-\Delta^n_-U^n(y,\mu^n_s))}{\dn} \left( \partial_{m_{y-\dn}}V^n_s - \partial_{m_{y}} V^n_s\right)
			\\
			& \qquad + \sum_{y\in S^n} \mu^n_{s,y} \frac{\sigma}{\dn^2}  \left( \partial_{m_{y+\dn}} U^n_s - 2\partial_{m_y} U^n_s +\partial_{m_{y-\dn}} U^n_s\right) \\
			&  \qquad \qquad - \sum_{y\in S^n} \mu^n_{s,y} \frac{\sigma}{\dn^2}  \left( \partial_{m_{y+\dn}} V^n_s - 2\partial_{m_y} V^n_s +\partial_{m_{y-\dn}} V^n_s\right)
			\bigg) \bigg] ds \\
			&=  \E_0 \int_t^T \bigg[ 
			|\Delta_+^n Z^n_s|^2 (\sigma- \dn \partial_pH_{\uparrow}(X^n_s,\Delta^n_+ U^n_s)) + |\Delta_-^n Z^n_s|^2 (\sigma+ \dn \partial_pH_{\downarrow}(X^n_s,-\Delta^n_- U^n_s)) \\
			&\quad +2 Z^n_s ( \sigma\Delta^n_2 Z^n_s - \partial_pH_{\uparrow}(X^n_s,\Delta^n_+ U^n_s) \Delta^n_+ Z^n_s + \partial_pH_{\downarrow}(X^n_s,-\Delta^n_-U^n_s)\Delta^n_- Z^n_s) \\
			& \quad +2W^n_s \bigg(H_{\uparrow}(X^n_s,\Delta^n_+ U^n_s) + H_{\downarrow}(X^n_s,-\Delta^n_-U^n_s)
			- H_{\uparrow}(X^n_s,\Delta^n_+ V^n_s) - H_{\downarrow}(X^n_s,-\Delta^n_- V^n_s)
			- \sigma\Delta_2^n Z^n_s +r^n_s \\
			&\qquad + \sum_{y\in S^n} \mu^n_{s,y}  (\partial_pH_{\uparrow}(y,\Delta^n_+ U^n(y,\mu^n_s))
			-\partial_pH_{\uparrow}(y,\Delta^n_+ V^n(y,\mu^n_s))) 
			\frac{  \partial_{m_{y+\dn}}V^n_s - \partial_{m_{y}} V^n_s }{ \dn}\\
			&\qquad - \sum_{y\in S^n} \mu^n_{s,y}  (\partial_pH_{\downarrow}(y,-\Delta^n_- U^n(y,\mu^n_s))
			-\partial_pH_{\downarrow}(y,-\Delta^n_- V^n(y,\mu^n_s))) 
			\frac{  \partial_{m_{y-\dn}}V^n_s - \partial_{m_{y}} V^n_s } {\dn} \bigg) \bigg]ds \\
			&=  \E_0 \int_t^T \bigg[ 
			|\Delta_+^n Z^n_s|^2 (\sigma- \dn \partial_pH_{\uparrow}(X^n_s,\Delta^n_+ U^n_s)) + |\Delta_-^n Z^n_s|^2 (\sigma+ \dn \partial_pH_{\downarrow}(X^n_s,-\Delta^n_- U^n_s)) \\
			& \quad +2W^n_s \bigg(- \partial_pH_{\uparrow}(X^n_s,\Delta^n_+ U^n_s) \Delta^n_+ Z^n_s +H_{\uparrow}(X^n_s,\Delta^n_+ U^n_s)
			- H_{\uparrow}(X^n_s,\Delta^n_+ V^n_s)\\
			& \qquad \qquad+ \partial_pH_{\downarrow}(X^n_s,-\Delta^n_- U^n_s) \Delta^n_- Z^n_s +H_{\downarrow}(X^n_s,-\Delta^n_- U^n_s)
			- H_{\downarrow}(X^n_s,-\Delta^n_- V^n_s)+r^n_s \\
			&\qquad + \sum_{y\in S^n} \mu^n_{s,y}  (\partial_pH_{\uparrow}(y,\Delta^n_+ U^n(y,\mu^n_s))
			-\partial_pH_{\uparrow}(y,\Delta^n_+ V^n(y,\mu^n_s))) 
			\frac{  \partial_{m_{y+\dn}}V^n_s - \partial_{m_{y}} V^n_s }{ \dn}\\
			&\qquad - \sum_{y\in S^n} \mu^n_{s,y}  (\partial_pH_{\downarrow}(y,-\Delta^n_- U^n(y,\mu^n_s))
			-\partial_pH_{\downarrow}(y,-\Delta^n_- V^n(y,\mu^n_s))) 
			\frac{  \partial_{m_{y-\dn}}V^n_s - \partial_{m_{y}} V^n_s } {\dn} \bigg) \bigg]ds.
			%&\qquad + \sum_{y\in S^n} \mu^n_{s,y}  (\partial_pH(y,\Delta^n U^n(y,\mu^n_s))
			%-\partial_pH(y,\Delta^n V^n(y,\mu^n_s))) 
			%\frac{  \partial_{m_{y+\dn}}V^n_s - \partial_{m_{y-\dn}} V^n_s }{2 \dn} \bigg) \bigg]ds .
		\end{align*} 
		We recall that $-\partial_pH_{\uparrow}(X^n_s,\Delta^n_{+} U^n_s)$ and $\partial_pH_{\downarrow }(X^n_s,-\Delta^n_{\mp} U^n_s)$ are the optimal transition rates and thus they are non-negative. 
		Since $|\Delta^n_{\pm} U^n_s|\leq M$
		%, we have 
		% \[
		%\sigma\pm \dn \partial_pH_{\downarrow \uparrow}(X^n_s,\Delta^n_{\mp} U^n_s) \geq \frac\sigma2 \qquad \mbox{ if } \dn \leq \frac{1}{C},
		% \] 
		and 
		%for $C$ depending on $M$ and $H$. As 
		$W_T =0$, using the convexity inequality $AB \leq \epsilon A^2 + \frac{1}{4\epsilon} B^2$ and the bounds 
		\[
		\left| \frac{  \partial_{m_{y\pm\dn}}V^n_s - \partial_{m_{y}} V^n_s }{\dn} \mp D^m U( X^n_s, \mu^n_s) \right| \leq \omega \Big(\frac1n\Big)
		\] 
		and $|D^m U| \leq C$, as well as the fact that $H_{\uparrow}$ and $H_{\downarrow}$ and their derivatives are locally Lipschitz,  
		we obtain 
		\begin{align*}
			\E_0 &|Z^n_t|^2 + \sigma \E_0 \int_t^T \left(|\Delta_+^n Z^n_s|^2  + |\Delta_-^n Z^n_s|^2 \right) ds \\
			&\leq C \E_0 \int_t^T |Z^n_s|  \bigg( |\Delta^n_+ Z^n_s| + |\Delta^n_- Z^n_s|  
			+ \sum_{y\in S^n} \mu^n_{s,y}  |\partial_pH_{\uparrow}(y,\Delta^n_+ U^n(y,\mu^n_s))
			-\partial_pH_{\uparrow}(y,\Delta^n V^n_+(y,\mu^n_s))| \\
			& \qquad \qquad \quad +\sum_{y\in S^n} \mu^n_{s,y}  |\partial_pH_{\downarrow}(y,-\Delta^n_- U^n(y,\mu^n_s))
			-\partial_pH_{\downarrow}(y,-\Delta^n_- V^n(y,\mu^n_s))| + |r^n_s| + \omega \Big(\frac1n \Big)\bigg) ds \\
			&\leq C \E_0 \int_t^T |Z^n_s|  \left( |\Delta^n_+ Z^n_s| + |\Delta^n_- Z^n_s|   + \omega \Big(\frac1n \Big)  
			\right) ds \\
			&\leq C\E_0 \int_t^T |Z^n_s|^2 ds + \frac{\sigma}{2} \E_0 \int_t^T \left(|\Delta_+^n Z^n_s|^2  + |\Delta_-^n Z^n_s|^2 \right) ds 
			+C \omega^2 \Big( \frac1n \Big) .
		\end{align*}
		This gives  
		\be 
		\label{eq:38}
		\E_0 |Z^n_t|^2 +\frac \sigma2 \E_0 \int_t^T \left(|\Delta_+^n Z^n_s|^2  + |\Delta_-^n Z^n_s|^2 \right)ds \leq C\E_0 \int_t^T |Z^n_s|^2 ds + C \omega^2 \Big( \frac1n \Big) 
		\ee 
		and thus Gronwall's inequality yields 
		\be 
		\label{eq:39}
		\sup_{t\in [t_0,T]} \E_0 |Z^n_t|^2 \leq C \omega^2 \Big( \frac1n \Big)
		\qquad \P - a.s.  
		\ee 
		At $t=t_0$, the above inequality gives 
		\be 
		|U^n(t_0, X^n_{t_0}, \mu_0) -V^n (t_0, X^n_{t_0}, \mu_0)| \leq C \omega \Big( \frac1n \Big)  \qquad \P - a.s. ,
		\ee  
		which, since $\Law(X_{t_0}) =\mu_0$ is supported on the entire $S^n$, provides 
		\be 
		|U^n(t_0, x, \mu_0) -V^n (t_0, x, \mu_0)| \leq C \omega \Big( \frac1n \Big)
		\ee 
		for any $t_0 \in [0,T]$, $x\in S^n$ and  $\mu_0$ in the interior of $\mathcal{P}(S^n)$. Since $U^n$ and $V^n$ are continuous in the measure argument, the above inequality holds for any $\mu\in \mathcal{P}(S^n)$, which provides \eqref{conv:Un}, but only for $n \geq 4M$; changing the value of the constant, \eqref{conv:Un} holds for any $n$. 
		
		Finally, letting $t_0 = 0$, applying \eqref{eq:39} into \eqref{eq:38} and taking the expectation, we obtain \eqref{conv:Deltan}. 
	\end{proof}
	
	We now turn to the convergence of the trajectories at equilibrium. Consider an initial distribution (at time 0) $m_0$ of the limit MFG, and a random variable $\xi$ (with values in $\T$)  with $\Law(\xi)=m_0$. For the discretization, let $E^n_i= [x^n_i-\frac{1}{2n}, x^n_i-\frac{1}{2n})$ and 
	\be 
	m^n_0 = \sum_{i=1}^n m_0(E^n_i) \delta_{x^n_i}  , \qquad
	\xi^n = \sum_{i=1}^n x^n_i \mathbbm{1}_{\{\xi\in E^n_i\}}.
	\ee
	We have $\Law(\xi^n)=m^n_0$ and $\sqrt[k]{\E |\xi_n-\xi|^k}\leq \frac{1}{2n}$ for any integer $k\geq 1$. 
	
	Let $X^n$ be the trajectory at equilibrium for the discrete MFG with initial condition $\xi^n$. Hence the control of the players is given by $\alpha^n_+(t,x) = - \partial_pH_{\uparrow}(x,\Delta^n_+ U^n(t,x, \Law(X^n_t)))$ (and similarly for $\alpha^n_-$), where $U^n$ is the classical solution to \eqref{master:n}. Let also $X$ be the optimal process for the limit MFG \eqref{MFGsystem} with initial condition $\xi$. The associated control is thus given by $\alpha(t,x) = -\partial_pH(x,\partial_x U(t,x,\Law(X_t)))$, where $U$ is the classical solution of \eqref{master}. 
	%Let $\mathcal{D}([0,T], \T)$ be the space of c\`adl\`ag functions endowed with the Skorokhod $J_1$ topology, and recall that $W_1$ is the 1-Wasserstein distance on $\mathcal{P}(\T)$. 

	\begin{thm}[Convergence of trajectories] 
		\label{thm:conv:traj}
		%If there exists $U^n$ and $U$ classical solutions to \eqref{master:n2} and \eqref{master} then 
		Under the assumptions of Theorem \ref{thm:conv:Un}, using the above notations, we have
		\be 
		\label{conv:47}
		\sup_{0\leq t\leq T} W_1( \Law(X^n_t), \Law(X_t)) \leq  
		C \omega\left(\frac1n\right) +   \frac{C}{n^{\frac13}}
		\ee
		and further 
		\be 
		\label{conv:traj}
		\lim_n X^n = X    \qquad \mbox{ in law in } \mathcal{D}([0,T], \T).
		\ee
		
		%\be 
		%\label{conv:traj}
		%\lim_n X^n = X    \qquad \mbox{ in law in } \mathcal{D}([0,T], \T).
		%\ee 
		%As a consequence 
		%\be 
		%\lim_n \sup_{0\leq t\leq T} W_1 (\Law(X^n_t), \Law(X_t)) = 0
		%\ee 
		%where $X^n$ and $X$ are the unique optimal trajectories. 
	\end{thm}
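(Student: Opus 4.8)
The plan is to interpolate, between the discrete equilibrium process $X^n$ and the limit diffusion $X$, an auxiliary Markov chain on $S^n$ that discretizes the limit dynamics with the limit feedback frozen along the limit equilibrium flow $m_t=\Law(X_t)$, and then to combine a synchronous--coupling estimate with the Markov chain approximation result of the Appendix. First I would introduce the intermediate chain $\bar X^n$ on $S^n$, started at $\xi^n$, whose time--inhomogeneous generator is obtained from \eqref{gen:n} by replacing $U^n$ with $V^n$ and freezing the measure at $m_t$, namely
\be
\bar{\mathcal{L}}^n_t \phi(x) = -\partial_p H_{\uparrow}(x,\Delta^n_+ V^n(t,x,m_t))\Delta^n_+\phi(x) + \partial_p H_{\downarrow}(x,-\Delta^n_- V^n(t,x,m_t))\Delta^n_-\phi(x) + \sigma\Delta^n_2\phi(x).
\ee
Since $\Delta^n_\pm V^n \to \pm\partial_x U$ and $\partial_pH_\uparrow+\partial_pH_\downarrow=\partial_pH$, the coefficients of $\bar{\mathcal{L}}^n_t$ converge to those of the limit generator exactly as in the heuristic of \S\ref{sec:1.2}; being driven by a given flow $m_t$, the chain $\bar X^n$ is a genuine (non McKean--Vlasov) Markov chain approximation of $X$, and Proposition \ref{lem:7} yields $\sup_t W_1(\Law(\bar X^n_t),\Law(X_t)) \leq C n^{-1/3}$.

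It then remains to compare $X^n$ with $\bar X^n$. I would realize both through the Poisson representation \eqref{dyn:Xn} driven by the same random measure $\mathcal{N}$ and the same initial state $\xi^n$, and control $d(t):=\E|X^n_t-\bar X^n_t|$. The two chains differ only through their feedbacks; writing $W^n=U^n-V^n$ and using that $\partial_pH_\uparrow,\partial_pH_\downarrow$ are locally Lipschitz together with the uniform bound $|\Delta^n_\pm U^n|\le M$ from Lemma \ref{lemma:lip}, the discrepancy between the rates splits as
\be
|\Delta^n_\pm U^n(t,x,\mu^n_t)-\Delta^n_\pm V^n(t,x,m_t)| \leq |\Delta^n_\pm W^n(t,x,\mu^n_t)| + C\, W_1(\mu^n_t,m_t),
\ee
where $\mu^n_t=\Law(X^n_t)$ and the second bound uses the Lipschitz dependence of $\partial_x U$ on the measure. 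The first term is small in the $L^2$--in--time sense along the trajectory of $X^n$ by \eqref{conv:Deltan}, while the second is at most $d(t)+Cn^{-1/3}$ by the triangle inequality and the previous step. Because each jump has size $\dn$ whereas the rate difference is of order $\dn^{-1}$ times the feedback discrepancy, the mismatch accumulates in $d(t)$ precisely at the rate of the feedback discrepancy (the same cancellation already exploited in the It\^o expansion of Theorem \ref{thm:conv:Un}); a Gronwall argument then gives $d(t)\leq C(\omega(1/n)+n^{-1/3})$, and the triangle inequality with the bound on $\bar X^n$ yields \eqref{conv:47}. The main obstacle here is the self--referential measure dependence, i.e.\ that the feedback of $X^n$ is evaluated at its own unknown law $\mu^n_t$; this circularity is broken by the splitting above, which reinjects $W_1(\mu^n_t,m_t)\le d(t)+Cn^{-1/3}$ into the Gronwall inequality.

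Finally, for the path--space statement \eqref{conv:traj} I would first establish tightness of $\{\Law(X^n)\}$ in $\mathcal{D}([0,T],\T)$: the state space is compact, the jumps have vanishing size $\dn$, and the generator bounds give the standard Aldous--type estimates, so the family is $C$--tight. I would then identify any subsequential limit through the martingale problem for $\mathcal{L}\phi=-\partial_pH(x,\partial_xU(t,x,\Law(X_t)))\partial_x\phi+\sigma\partial_{xx}\phi$. For $\phi\in\mathcal{C}^2(\T)$ the process $\phi(X^n_t)-\phi(X^n_0)-\int_0^t\mathcal{L}^n_s\phi(X^n_s)\,ds$ is a martingale, and the only non--uniform part of $\mathcal{L}^n_s\phi-\mathcal{L}\phi$ stems from replacing $\Delta^n_\pm U^n$ by $\partial_xU$; its contribution is controlled in $L^2$ along the trajectory by \eqref{conv:Deltan}, while the measure argument is handled by the marginal convergence $\Law(X^n_t)\to\Law(X_t)$ just established. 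Hence every limit point solves the martingale problem of the limit MFG, and by the uniqueness of the equilibrium implied by the monotonicity \eqref{mono} this limit is $X$; as every subsequence admits a further subsequence converging to the same limit, the whole sequence converges, giving \eqref{conv:traj}. The delicate point is that the feedback convergence $\Delta^n_\pm U^n\to\partial_xU$ holds only in the integrated $L^2$ sense and not pointwise, which is why the identification must be carried out against the trajectory, where \eqref{conv:Deltan} and the non--degeneracy of the noise apply.
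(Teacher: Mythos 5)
Your proposal is correct in substance and, for the quantitative bound \eqref{conv:47}, follows essentially the paper's strategy: an auxiliary chain on $S^n$ driven by the limit solution along the frozen flow $m_t=\Law(X_t)$, Proposition \ref{lem:7} applied to that chain, and a synchronous-coupling/Gronwall comparison with $X^n$ fed by \eqref{conv:Deltan}. Two differences are worth noting. First, the paper uses \emph{two} auxiliary chains --- $\bar X^n$, driven by $\Delta^n_\pm U$ at its own law (McKean--Vlasov type), and $\tilde X^n$, driven by $\partial_x U$ at the frozen law $m_t$ --- and compares $X^n$ with $\bar X^n$ and $\bar X^n$ with $\tilde X^n$; you compress this into a single chain and re-inject $W_1(\mu^n_t,m_t)\le d(t)+Cn^{-1/3}$ into the Gronwall inequality, which is a legitimate reorganization and arguably cleaner. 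A small caveat: because your chain's rates are built from $\Delta^n_\pm V^n$ rather than from $\partial_x U$, Proposition \ref{lem:7} does not apply verbatim (its hypotheses require rates of the form $\psi_\pm(\alpha)$ with $\alpha$ the drift of the limit diffusion); you need one extra uniform $O(\omega(1/n))$ coupling step to pass from $\Delta^n_\pm V^n$ to $\pm\partial_x U$ --- exactly the discrepancy that the paper's choice of $\tilde X^n$ avoids (compare the analogous step \eqref{eq:54} in the proof of Theorem \ref{thm:conv5}). Second, for the path-space statement \eqref{conv:traj} your route (Aldous tightness plus identification of limit points through the martingale problem, using \eqref{conv:Deltan} and the marginal convergence) is genuinely different from, and heavier than, the paper's: there, convergence in law of $\tilde X^n$ to $X$ in $\mathcal{D}([0,T],\T)$ comes for free from Proposition \ref{lem:7}, and is transferred to $X^n$ through the uniform-in-time $L^1$ bound \eqref{eq:48}, with no separate tightness or identification argument. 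Your martingale-problem argument does work, but note that once the marginal flow is identified with $(m_t)$ by \eqref{conv:47}, the limiting martingale problem is a \emph{linear} one with non-degenerate diffusion and H\"older drift, whose well-posedness is classical; invoking uniqueness of the MFG equilibrium via the monotonicity \eqref{mono} is unnecessary at that stage.
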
 
	
	\begin{proof} 
		Let $\bar{X}^n$ and $\tilde{X}^n$ be the processes starting at $\xi^n$, with dynamics given by \eqref{rate:n}, with  controls therein given  by $(\bar{\alpha}^n_+(t,x),\bar{\alpha}^n_-(t,x)) = (- \partial_pH_{\uparrow}(x,\Delta^n_+ U (t, x, \Law(\bar{X}^n_t) )), \partial_pH_{\downarrow}(x,-\Delta^n_-U(t,x,\Law(\bar{X}^n_t))))$ and 
		$(\tilde{\alpha}^n_+(t,x),\tilde{\alpha}^n_-(t,x)) = (- \partial_pH_{\uparrow}(x,\partial_x U (t, x, \Law(X_t) )),\partial_pH_{\downarrow}(x,\partial_x U(t,x,\Law(X_t))))$ respectively.
		The SDE representation \eqref{dyn:Xn}, applying then \eqref{conv:Deltan}, Jensen's inequality and the Lipschitz continuity of $\partial_x U$ in $x$ and $m$ (in $W_1$) and recalling that $W_1(\Law(X),\Law(Y)) \leq \E|X-Y|$, give
		\begin{align*}
			\E \sup_{0\leq s\leq t} | X^n_s - \bar{X}^n_s|
			&\leq \E \int_0^t |\partial_pH_{\uparrow}(X^n_s,\Delta^n_+ U^n(s,X^n_s, \Law(X^n_s))) - \partial_pH_{\uparrow}(\bar X^n_s,\Delta^n_+ U(s,\bar{X}^n_s, \Law(\bar{X}^n_s))) |\\
			&\qquad + |\partial_pH_{\downarrow}(X^n_s,-\Delta^n_- U^n(s,X^n_s, \Law(X^n_s))) - \partial_pH_{\downarrow}(\bar X^n_s,-\Delta^n_- U(s,\bar{X}^n_s, \Law(\bar{X}^n_s))) |ds \\
			& \leq  \E \int_0^t |\partial_pH_{\uparrow}(X^n_s,\Delta^n_+ U^n(s,X^n_s, \Law(X^n_s))) - \partial_pH_{\uparrow}(X^n_s,\Delta^n_+ U(s,X^n_s, \Law(X^n_s))) |\\
			&\qquad +  |\partial_pH_{\downarrow}(X^n_s,-\Delta^n_- U^n(s,X^n_s, \Law(X^n_s))) - \partial_pH_{\downarrow}(X^n_s,-\Delta^n_- U(s,X^n_s, \Law(X^n_s))) |ds \\
			&\qquad + \E \int_0^t |\partial_pH_{\uparrow}(X^n_s,\Delta^n_+ U(s,X^n_s, \Law(X^n_s))) - \partial_pH_{\uparrow}(\bar X^n_s,\Delta^n_+ U(s,\bar{X}^n_s, \Law(\bar{X}^n_s))) |\\
			& \qquad + |\partial_pH_{\downarrow}(X^n_s,-\Delta^n_- U(s,X^n_s, \Law(X^n_s))) - \partial_pH_{\downarrow}(\bar X^n_s,-\Delta^n_- U(s,\bar{X}^n_s, \Law(\bar{X}^n_s))) |ds \\
			&\leq C \omega\left(\frac1n\right) 
			+ C \E \int_0^t \sup_{0\leq r\leq s} | X^n_r - \bar{X}^n_r|ds,
		\end{align*}
		and therefore Gronwall's lemma yields
		\be 
		\label{eq:47}
		\E \sup_{0\leq t\leq T} | X^n_t - \bar{X}^n_t| \leq C \omega\left(\frac1n\right).
		\ee  
		Similarly, recalling that $||\Delta^n_{\pm} U \mp \partial_xU ||_\infty\leq \omega(\frac1n)$, we have 
		\begin{align*}
			\E &\sup_{0\leq s\leq t} | \tilde{X}^n_s - \bar{X}^n_s| 
			\leq \E \int_0^t |\partial_pH_{\uparrow}(\tilde X^n_s,\partial_x U(s,\tilde{X}^n_s, \Law(X_s))) - \partial_p H_{\uparrow}(\bar X^n_s,\Delta^n_+ U(s,\bar{X}^n_s, \Law(\bar{X}^n_s)) )|\\
			& \qquad \qquad \qquad \qquad \qquad |\partial_pH_{\downarrow}(\tilde X^n_s,\partial_x U(s,\tilde{X}^n_s, \Law(X_s))) - \partial_p H_{\downarrow}(\bar X^n_s,-\Delta^n_- U(s,\bar{X}^n_s, \Law(\bar{X}^n_s)) )| ds \\
			&\leq C \omega\left(\frac1n\right) 
			+ C \E \int_0^t | \tilde{X}^n_s - \bar{X}^n_s| + W_1( \Law( \tilde{X}^n_s), \Law( \bar{X}^n_s) ) + W_1( \Law( \tilde{X}^n_s), \Law( X_s)) ds \\
			&\leq C\omega\left(\frac1n\right) 
			+ C\sup_{0\leq t\leq T} W_1( \Law( \tilde{X}^n_t), \Law( X_t)) 
			+C \E \int_0^t \sup_{0\leq r\leq s} | \tilde{X}^n_r - \bar{X}^n_r| ds
		\end{align*} 
		and thus, applying Gronwall's inequality, we get
		\[
		\E \sup_{0\leq t\leq T} | \tilde{X}^n_t - \bar{X}^n_t|  \leq C\omega\left(\frac1n\right) 
		+ C\sup_{0\leq t\leq T} W_1( \Law( \tilde{X}^n_t), \Law( X_t)),  
		\] 
		which, together with \eqref{eq:47}, implies
		\be 
		\label{eq:48}
		\E \sup_{0\leq t\leq T} | \tilde{X}^n_t - X^n_t|  \leq C\omega\left(\frac1n\right) 
		+ C\sup_{0\leq t\leq T} W_1( \Law( \tilde{X}^n_t), \Law( X_t)).
		\ee 
		By Proposition \ref{lem:7} in the appendix, we have convergence in law of $\tilde{X}^n$ t $X$ and the estimate  
		\be 
		\label{rate:W2}
		\sup_{0\leq t\leq T} W_1( \Law( \tilde{X}^n_t), \Law( X_t)) \leq \frac{C}{n^{\frac13}}, 
		\ee
		which, applied in \eqref{eq:48}, yield the claims. 
		%
		%Now, if we show that $\tilde{X}^n \rightarrow X$ in law in $\mathcal{D}([0,T], \T)$  then, as a consequence, 
		%\[
		%\lim_n \sup_{0\leq t\leq T} W_1( \Law( \tilde{X}^n_t), \Law( X_t)) =0
		%\] 
		%and thus the claim \eqref{conv:traj} follows from \eqref{eq:48}. 
		%
		%The convergence $\tilde{X}^n \rightarrow X$ is given by the convergence of the generators (plus convergence in law of the initial conditions), noticing that there is no dependence on the law in $\tilde{X}^n$ and hence the depencence on $\Law(X_t)$ in the two processes can be treated as a dependence on time. Indeed, for any function $\phi\in \mathcal{C}^2(\T)$, the generator of $\tilde{X}^n$ is (for $x\in S^n$)
		%\begin{align*}
		%\mathcal{L}^n_t \phi(x) &= \left( -\frac{\partial_pH_{\uparrow}(x,\partial_x U(t, x, \Law(X_t)))}{ \dn} +\frac{\sigma}{\dn^2} \right) \left[ \phi(x+\dn) - \phi(x)\right] \\
		%&\qquad + \left(\frac{\partial_pH_{\downarrow}(x,\partial_x U(t, x, \Law(X_t)))}{ \dn} +\frac{\sigma}{\dn^2} \right) \left[ \phi(x-\dn) - \phi(x)\right] \\
		%&= -\partial_pH_{\uparrow}(x,\partial_x U(t, x, \Law(X_t))) \Delta^n_+ \phi(x) + \partial_p H_{\downarrow}(x,\partial_x U(t,x,\Law(X_t)))\Delta^n_- \phi(x)
		% +\sigma \Delta_2^n \phi(x) \\
		% & \rightarrow -\partial_pH(x,\partial_x U(t, x, \Law(X_t))) \partial_x \phi(x) 
		% +\sigma\partial_{xx} \phi(x),
		%\end{align*}  
		%which is the generator of the process $X$. 
	\end{proof}

	\subsubsection{Another discretization} 
	
	We recall that, if the cost coefficients are monotone and sufficiently regular in the measure argument, then there exist a solution to the continuous master equation \eqref{master}; see \cite[Thm. 2.4.2]{cardaliaguet2019master}.  As a matter of fact, in that result, the Hamiltonian $H$ is also required to be smooth. Above, we assume that the discrete master equation \eqref{master:n} also possesses a classical solution, in order to apply the chain rule. Such solution is shown to exists in the literarure, assuming that the discrete Hamiltonian is smooth, in particular $\mathcal{C}^2$ in the adjoint variable; see \cite{bayraktarcohen, cecchin2019b} and also \cite{delaruecetraro}. However, the Hamiltonians we consider in \eqref{Hn} is not $\mathcal{C}^2$ in general --see remark \ref{rem:quadratic}-- and thus the existence of a classical solution is not entirely clear in this case. 
	
	Hence we provide here, for completeness, another discretization of the MFG for which the discrete hamiltonian is $\mathcal{C}^2$. Such discretization is possible only in presence of the Laplacian and thus it is not the usual one considered in numerical analysis. For this reason we consider in the paper only the discretization \eqref{rate:n}-\eqref{cost:n}, but  all the argumets could be easily adapted to the following discrete model.    
	Consider a single control 
	$\alpha^n: [0,T]\times S^n \rightarrow \R$, the transition rates 
	\be
	\label{rate:n2}
	\P( X^n_{t+\Delta t} = x^n_{i\pm 1} | X^n_t = x^n_i) = \left(\pm\frac{\alpha^n(t,x^n_i)}{2 \dn} + \frac{\sigma}{\dn^2} \right) \Delta t +o(\Delta t)
	\ee
	and the cost 
	\be 
	\label{cost:n2}
	J^n(\alpha^n, \mu^n) = \E\left[ \int_0^T L(X^n_t,\alpha^n(t,X^n_t)) 
	+ f(\mu^n_t)(X^n_t) dt +g(\mu^n_T)(X^n_T) \right].
	\ee
	Note that the rates are non-negative  if 
	\be 
	|\alpha(t,x)|\leq \frac{2\sigma}{\dn} =2\sigma n ,
	\ee  
	which should a posteriori hold true for $n$ large enough. 
	This formulation provides a smooth Hamiltonian, and is allowed because of the additional viscosity. 
	%Instead, if there is no Brownian motion in the limit dynamics \eqref{dyn} then we should consider the  discretization in \eqref{rate:n}. 
	Denoting 
	\be 
	\Delta^n u(x) = \frac{u(x+\dn) -u(x-\dn)}{2\dn}
	\ee 
	we derive the HJB equation 
	\be 
	\label{HJB:n2}
	- \partial_t u^n -\sigma \Delta^n_2 u^n(x) +H(x,\Delta^n u^n(x)),  =f(\mu^n_t)(x)
	\ee 
	where $H$ is the same Hamiltonian $H$ of the continuous model, 
	and the optimal control is 
	\be
	\alpha^n(t,x) = -\partial_p H(x, \Delta^n u^n(t,x)).
	\ee
	Therefore the discrete master equation becomes 
	\be 
	\label{master:n2}
	\begin{split}
		&-\partial_t U^n(x,m)   +H(x,\Delta^n U^n(x,m)) - \sigma\Delta_2^n U^n(x,m) -f(m)(x) \\
		& \qquad + \sum_{y\in S^n} m_y  \frac{\partial_pH(y,\Delta^n U^n(y,m))}{2\dn} \left( \partial_{m_{y+\dn}}U^n(x,m) - \partial_{m_{y-\dn}} U^n(x,m)\right) \\
		& \qquad - \sum_{y\in S^n} m_y \frac{\sigma}{\dn^2}  \left( \partial_{m_{y+\dn}} U^n(x,m) - 2\partial_{m_y} U^n(x,m) +\partial_{m_{y-\dn}} U^n(x,m)\right) =0.
	\end{split}
	\ee 
	and we can formally see the convergence of the above equation to \eqref{master} as in \S \ref{sec:1.2}.
	
	%\subsection{Third} We may think of another discretization: consider a control 
	%$\alpha^n: [0,T]\times S^n \rightarrow \R$ as above and transition rates
	%\be
	%\P( X^n_{t+\Delta t} = x^n_{i\pm 1} | X^n_t = x^n_i) = \left(\frac{(\alpha^n(t,x^n_i))_{\pm}}{ \dn} + \frac{\sigma}{\dn^2} \right) \Delta t +o(\Delta t),
	%\ee
	%but in this way the pre-Hamiltonian may not have a unique maximizer and thus the Hamiltonian is not differentiable.

	\subsection{Convergence through the MFG system} 
	\label{sec:3.2} 
	
	Without assuming regularity on $f$ and $g$ in the measure argument, thus without having a classical solutions to the master equation, convergence results can still be established, namely by using the MFG system, which represents the characteristic curves of the master equation. 
	%\red{This analysis is concerned with the first discretization \eqref{master:n}.  }
	
	\begin{thm}\label{thm:conv5}
		%\red{Assume that $f$ and $g$ are monotone and $W_1$-Lipschitz continuous,  and $f$ is bounded in $\mathcal{C}^{\gamma}(\T)$ and $g$ in $\mathcal{C}^{2+\gamma}(\T)$, uniformly in $m$, for a $\gamma \in (0,1)$. 
			%Then  there exists $\lambda \in (0,1)$ such that,}
		If $\gamma \geq \frac13$ in the standing assumptions {in \S \ref{S:2.2}}, then 
		for any $n$ 
		\be 
		\label{conv:62}
		|U^n(t,x,m) - U(t,x,m)| \leq \frac{C}{n^{\frac16}}, \qquad 
		\forall t\in [0,T],  x\in S^n, m\in \mathcal{P}(S_n).
		\ee
		Moreover, let $X^n$ be the state process of players which plays optimally at the equilibrium in the MFG \eqref{rate:n}-\eqref{cost:n}, with initial distribution $m^n_0$ at $t = 0$, let $X$ be the state process of a player which plays optimally at equilibrium in the limit MFG \eqref{dyn}-\eqref{cost}, with initial condition $m_0$ at $t = 0$. Then $W_1(m^n_0, m_0) \leq \frac{C}{n}$ implies
		\be 
		\label{conv:63}
		\sup_{0\leq t\leq T} W_1( \Law(X^n_t), \Law(X_t)) \leq \frac{C}{n^{\frac16}}
		\ee
		and further 
		\be 
		\label{conv:64}
		\lim_n X^n = X    \qquad \mbox{ in law in } \mathcal{D}([0,T], \T).
		\ee
	\end{thm}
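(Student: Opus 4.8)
The plan is to use the two MFG systems \eqref{MFGsystem} and \eqref{mfgsystem:n} as the characteristics of the respective master equations, so that $U(t,x,m)=u(t,x)$ and $U^n(t,x,m)=u^n(t,x)$ for the solutions started from $m$ at time $t$, and to compare the systems by a monotonicity energy estimate in which the loss of measure-regularity is absorbed entirely into consistency residuals and into the Markov chain approximation rate of Proposition \ref{lem:7}. The square-root loss inherent in such an energy estimate is what turns the $n^{-1/3}$ scale into the announced $n^{-1/6}$ rate.

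First I would record the spatial regularity of the limit system. Even though $f,g$ are only Lipschitz in $m$ (so no master-equation smoothness is available), for a \emph{fixed} flow $(\mu_t)$ the backward equation in \eqref{MFGsystem} has source $f(\mu_t)\in\mathcal C^{\gamma}(\T)$ and terminal datum $g(\mu_T)\in\mathcal C^{2+\gamma}(\T)$, so parabolic Schauder estimates give $\|u(t,\cdot)\|_{2+\gamma}\le C$ uniformly in $t$. Consequently the grid restriction $\hat u(t,x_i):=u(t,x_i)$ solves the discrete Hamilton--Jacobi equation in \eqref{mfgsystem:n} up to a residual $\rho^n$. Here I would use $\Delta^n_\pm\hat u\mp\partial_x u=O(1/n)$ and, crucially, $\Delta^n_2\hat u-\partial_{xx}u=O(n^{-\gamma})$ (since $\partial_{xx}u$ is only $\gamma$-Hölder), together with the splitting identity $H_\uparrow(x,p)+H_\downarrow(x,p)=H(x,p)$ and the local Lipschitz continuity of $H_\uparrow,H_\downarrow$, to get $\|\rho^n\|_\infty\le C n^{-\gamma}$. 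This is exactly where the hypothesis $\gamma\ge 1/3$ enters, ensuring $n^{-\gamma}\le n^{-1/3}$.

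The core step is a discrete monotonicity estimate comparing $(u^n,\mu^n)$ with $(\hat u,\hat\mu^n)$, where $\hat\mu^n_t$ is the law of the chain \eqref{dyn:Xn} driven by the limiting optimal feedback $-\partial_pH(\cdot,\partial_x u)$, which by Proposition \ref{lem:7} stays within $W_1$-distance $Cn^{-1/3}$ of $\mu_t$. I would differentiate the coupling $t\mapsto\sum_{x\in S^n}(u^n-\hat u)(t,x)\,(\mu^n-\hat\mu^n)(t,x)$, substitute the two systems, and perform discrete summation by parts exactly as in the It\^o computation of Theorem \ref{thm:conv:Un}. The uniform convexity of $L$ (hence of $H$ in $p$ on compacts, with the uniform gradient bound of Lemma \ref{lemma:lip}) produces a coercive term controlling $\sum_x|\Delta^n_\pm(u^n-\hat u)|^2$ against $\mu^n$, while the monotonicity \eqref{mono} of $f$ and $g$ renders the coupling and terminal contributions favorable; the consistency residual $\rho^n$, the $O(1/n)$ discrepancy between the feedback of $\hat u$ and the limiting feedback, and the flow mismatch $W_1(\hat\mu^n_t,\mu_t)\le Cn^{-1/3}$ enter as errors, and $W_1(m^n_0,m_0)\le C/n$ makes the initial coupling negligible. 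Collecting terms yields an energy bound of order $n^{-\gamma}+n^{-1/3}=O(n^{-1/3})$ for $\int_s^T\sum_x|\Delta^n_\pm(u^n-\hat u)|^2\,\mu^n\,dt$, and the passage from this quadratic, measure-averaged, time-integrated gradient control to the pointwise value estimate \eqref{conv:62} costs a square root, producing $n^{-1/6}$. For the trajectories \eqref{conv:63}--\eqref{conv:64} I would then reuse the coupling scheme of Theorem \ref{thm:conv:traj}: introduce intermediate processes driven by the same Poisson representation \eqref{dyn:Xn} with the controls successively replaced ($-\partial_pH_\uparrow(\cdot,\Delta^n_+u^n)\rightsquigarrow-\partial_pH_\uparrow(\cdot,\partial_x u)$, and similarly for $H_\downarrow$), estimate each substitution by the local Lipschitz continuity of $\partial_pH_\uparrow,\partial_pH_\downarrow$, the gradient comparison just obtained and Gronwall, and control the final leg — the chain driven by the limit feedback versus the limit diffusion — by Proposition \ref{lem:7} at rate $n^{-1/3}$; the dominant $n^{-1/6}$ term comes from the value/gradient comparison. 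Convergence in law in $\mathcal D([0,T],\T)$ follows from tightness, since the generators $\mathcal L^n$ converge to the limit generator as computed in \S\ref{sec:1.2}, together with identification of every limit point as the unique optimal process of the monotone limit MFG.

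The main obstacle is making the discrete-to-continuous monotonicity estimate rigorous despite three difficulties: the split Hamiltonians $H_\uparrow,H_\downarrow$ are neither $\mathcal C^2$ nor uniformly convex, so coercivity must be extracted from the Laplacian together with the convexity of $L$ using only the uniform first-difference bound of Lemma \ref{lemma:lip} and local Lipschitz bounds, with no a priori second-difference control on $u^n$; the measure-side matching, where the genuine discrete Fokker--Planck flow and the sampled limiting flow differ, forces reliance on Proposition \ref{lem:7} and caps the attainable rate at $n^{-1/3}$; and the square-root loss, which converts the $L^2$-in-measure energy control into the pointwise $n^{-1/6}$ bounds for both the value and the trajectories.
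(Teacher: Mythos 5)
Your overall architecture matches the paper's: use the two MFG systems \eqref{MFGsystem} and \eqref{mfgsystem:n} as characteristics, invoke Proposition \ref{lem:7} for the $n^{-1/3}$ chain-versus-diffusion rate, exploit monotonicity \eqref{mono} to compare the systems, accept a square-root loss to reach $n^{-1/6}$, and use $\gamma\ge\tfrac13$ to balance the Schauder-based consistency error against $n^{-1/3}$. The genuine gap is in your core energy estimate. If you differentiate the duality bracket $t\mapsto\sum_x(u^n-\hat u)(\mu^n-\hat\mu^n)$ and sum by parts, the discrete Laplacian contributions cancel exactly (the discrete Laplacian is self-adjoint and enters the HJB and Fokker--Planck equations with opposite signs), so $\sigma$ produces \emph{no} coercive term in this computation; and the Hamiltonian contributions are Bregman divergences of $H_\uparrow$ and $H_\downarrow$, which are convex but not uniformly convex (Remark \ref{rem:quadratic}: $H_\uparrow(p)=\tfrac12 p_-^2$ is flat on $p\ge 0$). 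Hence the bracket computation cannot yield your claimed coercive term $\sum_x|\Delta^n_\pm(u^n-\hat u)|^2\mu^n$: whenever the discrete gradients of $u^n$ and $\hat u$ both lie in a flat region of the split Hamiltonians, the Bregman terms vanish while the gradient difference does not. The quadratic-variation mechanism you appeal to (``exactly as in the It\^o computation of Theorem \ref{thm:conv:Un}'') is a different computation --- expanding $|W^n|^2$ \emph{along the trajectories} --- and it is unavailable here, since it requires both functions to solve master-type equations, i.e.\ a classical solution of \eqref{master}, which is exactly what Theorem \ref{thm:conv5} does not assume. The paper is explicit about this obstruction (see the footnote in Step 2: the costs are compared ``because the Lagrangian is uniformly convex, while the Hamiltonian in general is not'') and circumvents it by running the monotonicity argument at the level of the control problems: $u$ restricted to $S^n$ and $u^n$ are value functions for the costs $\tilde J^n$ and $J^n$, the uniform convexity of $L$ gives \eqref{eq:511}--\eqref{eq:521}, and summing plus monotonicity yields coercivity in the \emph{controls}, \eqref{est:distn}, never in the gradients. (At best, co-coercivity of convex functions with locally Lipschitz derivative would let your bracket computation recover the same control bound; it cannot give the gradient bound.)

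A second, related flaw is the order of the downstream steps. The pointwise estimate \eqref{conv:62} cannot be extracted from an $L^2(\mu^n_t\,dt)$ bound ``by a square root'' alone --- among other things, $\mu^n_t$ may charge parts of $S^n$ very little. The paper's logic is: control bound \eqref{est:distn} $\Rightarrow$ (Gronwall plus Jensen, the actual square-root loss) trajectory bound $\E\sup_t|X^n_t-\tilde X^n_t|\le C n^{-1/6}$ in \eqref{eq:74} $\Rightarrow$ $\sup_t W_1(\mu^n_t,\tilde\mu^n_t)\le Cn^{-1/6}$, which combined with \eqref{rate:W1} gives \eqref{conv:63}; and only then the value estimate, obtained by evaluating $J^n$ and $\tilde J^n$ at a common near-optimal control and using the $W_1$-Lipschitz continuity of $f,g$ in the measure flow, so that $|u^n-u|\le C\sup_t W_1(\mu^n_t,\tilde\mu^n_t)+Cn^{-1/3}\le Cn^{-1/6}$. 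Thus trajectories come first and the values inherit $n^{-1/6}$ \emph{linearly} from the flow distance, whereas your proposal asserts the value bound before the trajectory comparison and leaves its mechanism unspecified. With these two repairs --- control coercivity via the Lagrangian in place of gradient coercivity, and the trajectory-then-value ordering --- your outline becomes the paper's proof.
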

	
	We remark that the result can be equivalently written in terms of the sole MFG systems \eqref{MFGsystem} and \eqref{HJB:n}-\eqref{FP:n}: denoting their unique solutions by $(u,m)$ and $(u^n, m^n)$, we have 
	\be 
	\sup_{0\leq t \leq T} \sup_{x\in S^n} |u^n(t,x) - u(t,x)| 
	+ \sup_{0\leq t \leq T} W_1(m^n_t, m_t)  \leq \frac{C}{n^{\frac16}}.
	\ee 
	The proof is inspired by the arguments of stability of the MFG system under monotonicity. 
	
	Without assuming that $\gamma \geq \frac13$,  we immediately obtain, from the proof below,  the convergence rate  $\min \{ \frac{\gamma}{2} , \frac16 \}$.

	\begin{proof}
		Fix $m_0$ and $m^n_0$ such that $W_1(m^n_0, m_0) \leq \frac{C}{n}$ and an initial time $t_0$.  We consider a solution $(u,m)$ of the MFG system \eqref{MFGsystem} starting at $(t_0,m_0)$, and denote by $X$ the corresponding optimal process given by \eqref{dyn} with $\alpha(t,x) = -\partial_pH(x,\partial_x u(t,x))$ therein.  
		
		\textbf{Step 1}.
		The assumption of the space regularity of $f$ and $g$ gives that $u(t, \cdot)\in \mathcal{C}^{2+\gamma}(\T)$ in space, uniformly in time; 
		see e.g. \cite[Thm. 1.5]{notescetraro} and \cite{gomes_book}. 
		Let $\hat{X}^n$ be the Markov chain given by \eqref{rate:n} with  
		$\alpha_{+}(t,x) = -\partial_pH_{\uparrow}(x,\partial_x u(t,x))$,   $\alpha_-(t,x) = \partial_pH_{\downarrow}(x,\partial_x u(t,x))$, and $\tilde{X}^n$ be given by \eqref{rate:n} with $\tilde{\alpha}_{+}(t,x) = -\partial_pH_{\uparrow}(x,\Delta^n_+ u(t,x))$, $\tilde{\alpha}_-(t,x) = \partial_pH_{\downarrow}(x,-\Delta^n_- u(t,x)) $. Since $\partial_x u$ is Lipschitz in $\T$, we easily derive
		\be 
		\label{eq:54}
		\E \Big[ \sup_{t_0\leq t\leq T} | \tilde{X}^n - \hat{X}^n_t | \Big] \leq \frac Cn .
		\ee 
		Proposition \ref{lem:7} in the appendix gives the 
		convergence in law of $\hat{X}^n$ to $X$, in $\mathcal{D}([0,T], \T)$, %holds by convergence of the generators.  
		and the estimate  
		\be 
		%\label{rate:W1}
		\sup_{0\leq t\leq T} W_1( \Law( \hat{X}^n_t), \Law( X_t)) \leq \frac{C}{n^{\frac13}}. 
		\ee
		Thus \eqref{eq:54} yields 
		%
		%From Lemma \ref{lem:7} below, we have 
		%
		%Indeed, denoting by $\mathcal{L}^n$ the generator of $\hat{X}^n$, we have 
		%\begin{align*}
		%\mathcal{L}^n_t \phi(x) &= 
		%\left( \frac{(\partial_x u(t,x))_-}{\dn}  +\frac{1}{\dn^2} \right) \left[ \phi(x+\dn) - \phi(x)\right] 
		%+\left( \frac{(\partial_x u(t,x))_+}{\dn}  +\frac{1}{\dn^2} \right) \left[ \phi(x-\dn) - \phi(x)\right]
		%\\
		%&= (\partial_x u(t,x))_-  \Delta^n_+ \phi(x)  +(\partial_x u(t,x))_+  \Delta^n_- \phi(x)
		%+\Delta_2^n \phi(x) \\
		% & \rightarrow \big( (\partial_x u(t,x))_-  - (\partial_x u(t,x))_+ \big)
		% \partial_x \phi(x) 
		% +\partial^2_x \phi(x) \\
		% &= - \partial_x u(t,x) \partial_x \phi(x) 
		% +\partial^2_x \phi(x),
		%\end{align*}
		%which is the generator of $X$. As a consequence
		\be 
		\label{conv:65}
		\lim_n \tilde{X}^n = X    \qquad \mbox{ in law in } \mathcal{D}([0,T], \T)
		\ee
		and
		%\be 
		%\label{conv:W1}
		%\lim_n \sup_{0\leq t\leq T} W_1( \Law( \tilde{X}^n_t), \Law( X_t)) =0. 
		%\ee 
		%\red{ Alekos: I am quite sure that the above estimate can be improved with a convergence rate:}
		\be 
		\label{rate:W1}
		\sup_{0\leq t\leq T} W_1( \Law( \tilde{X}^n_t), \Law( X_t)) \leq \frac{C}{n^\frac13}. 
		\ee
		%\red{In any case, \eqref{conv:W1} is enough to prove convergence fore the master equation. }
		Since $\sup_{t_0\leq t \leq T} ||u(t, \cdot)||_{2+\gamma} \leq C$, using \eqref{rate:W1}, the Lipschitz-continuity of $f$ and $g$, and \eqref{MFGsystem}, we obtain
		\be 
		\label{HJB:approx}
		\begin{cases} 
			-\partial_t u +H_{\uparrow}(x,\Delta_+^n u(x)) +H_{\downarrow}(x,-\Delta_-^n u(x)) - \sigma\Delta_2^n u(x) =f(x,\tilde{\mu}^n_t) +r^n(t,x) , \quad x\in S^n, & \\
			u(T,x) = g(x, \tilde{\mu}^n_t) + r^n(T,x),  &
		\end{cases}
		\ee
		with $|r^n(t,x)| \leq \frac{C}{n^\frac13}$, if $\gamma \geq \frac13$, whereas $\tilde{\mu}^n_t = \Law(\tilde{X}^n_t)$. Thus $(u,m)$ almost solves the discrete MFG system \eqref{mfgsystem:n}. 
		
		\textbf{Step 2}.
		Let $(u^n,\mu^n)$ be the solutions to the MFG \eqref{mfgsystem:n} starting at $(t_0, m^n_0)$ and $X^n$ the associated state process. Denote the initial random distribution on the states by $\xi^n$. We stress that this is the same initial condition as $\tilde{X}^n$. Thanks to \eqref{HJB:approx}, $u$ (restricted to $S^n$) can be seen as the value function of a control problem with dynamics \eqref{rate:n} and cost
		\be 
		\label{cost:ntilde}
		\begin{split}
			\tilde{J}^n(\alpha, \tilde{\mu}^n) = \E \bigg[ \int_0^T &L(X^n_t,\alpha_+^n(t,X^n_t))+
			L(X^n_t,-\alpha_-^n(t,X^n_t)) - L(X^n_t, 0) + f(X^n_t,\tilde{\mu}^n_t) +r^n(t,X^n_t) dt\\
			& +g(X^n_T, \tilde{\mu}^n_T) +r^n(T, X^n_T) \bigg].
		\end{split}
		\ee
		
		We first compute $u^n$ on $\tilde{X}^n$: denoting 
		$\tilde{\alpha}^n_{+}(t,x)= -\partial_pH_{\uparrow}(x,\Delta^n_{+}u(t,x))$,   $\tilde{\alpha}^n_-(t,x) = \partial_pH_{\downarrow}(x,-\Delta^n_-u(t,x))$
		and $\alpha^n_{+}(t,x)= -\partial_pH_{\uparrow}(x,\Delta^n_{+}u^n(t,x)), \alpha^n_-(t,x) = \partial_pH_{\downarrow}(x,-\Delta^n_-u^n(t,x))$,
		\begin{align*}
			&\E[ u^n(t_0,\xi^n)] = \E[ u^n(T, \tilde{X}^n_T) ] + \E \int_{t_0}^T
			\left( -\partial_t u^n  - \sigma\Delta^n_2 u^n - \tilde{\alpha}^n_+ \Delta^n_+ u^n -\tilde{\alpha}^n_- \Delta^n_- u^n \right) (s,\tilde{X}^n_s) ds \\
			&= \E \bigg[ g(\tilde{X}_T^n,\mu_T^n) +\int_{t_0}^T \left(
			-H_{\uparrow}(x, \Delta^n_+ u^n) -  H_{\downarrow}(x, -\Delta^n_- u^n)- \tilde{\alpha}_+^n \Delta^n_+ u^n -\tilde{\alpha}^n_- \Delta^n_- u^n 
			\right)(s, \tilde{X}^n_s) +f(\tilde{X}^n_s, \mu^n_s) ds \bigg] \\
			&\leq \E \bigg[ g(\tilde{X}^n_T,\mu^n_T) +\int_{t_0}^T \big(
			-H_{\uparrow}(x, \Delta^n_+ u^n) -L(x,\tilde{\alpha}^n_+) -  H_{\downarrow}(x, -\Delta^n_- u^n) - L(x,-\tilde{\alpha}^n_-) - \tilde{\alpha}_+^n \Delta^n_+ u^n -\tilde{\alpha}^n_- \Delta^n_- u^n\\
			&\qquad+ L(x,\tilde{\alpha}^n_+) +  L(x,-\tilde{\alpha}^n_-) 
			\big)(s, \tilde{X}^n_s) +f(\tilde{X}^n_s, \mu^n_s) ds \bigg],
		\end{align*}
		which gives, using the uniform convexity of $L$ 
		(denoting $|\alpha-\tilde{\alpha}|^2 = |\alpha_+-\tilde{\alpha}_+|^2 + |\alpha_--\tilde{\alpha}_-|^2$ ) 
		\footnote{
			Note that we compare the costs because the Lagrangian is uniformly convex, while the Hamiltonian in general is not; see remark \ref{rem:quadratic}. 
		}
		\be 
		\label{eq:511}
		\E \int_{t_0}^T \frac1C |\alpha^n - \tilde{\alpha}^n|^2(s,\tilde{X}_s) ds \leq 
		J^n( \tilde{\alpha}^n, \mu^n) - \E[u^n(t_0,\xi^n)] .
		\ee 
		We recall that 
		\[
		\E[u^n(t_0,\xi^n)] = J^n(\alpha^n, \mu^n) = \E \bigg[ g(X^n_T,\mu^n_T) +\int_{t_0}^T \left(
		L(x,\alpha_+^n) + L(x,-\alpha_-^n) - L(x,0)
		\right)(s, X^n_s) +f(X^n_s, \mu^n_s) ds \bigg] .
		\]
		On the other hand, evaluating $u$ on $X^n$, a similar argument yields 
		%\begin{align*}
		%&\E[ u(t_0,\xi^n)] = \E[ u(T, X^n_T) ] + \E \int_{t_0}^T
		%\left( -\partial_t u  - \Delta^n_2 u - \alpha^n_+ \Delta^n_+ u -\alpha^n_- \Delta^n_- u \right) (s,X^n_s) ds \\
		%&= \E \big[ g(X_T^n,\tilde{\mu}_T^n) +r^n(T, X^n_t)\big] \\
		%& +\E \bigg[ \int_{t_0}^T \left(
		%-\frac12 ( \Delta^n_+ u)_-^2 -  \frac12 ( \Delta^n_- u)_-^2 - \alpha_+^n \Delta^n_+ u -\alpha^n_- \Delta^n_- u 
		%\right)(s, X^n_s) +f(X^n_s, \tilde{\mu}^n_s) + r^n(s, X^n_s) ds\bigg] \\
		%&\leq \E \big[ g(X_T^n,\tilde{\mu}_T^n) +r^n(T, X^n_t)\big] \\
		%&+\E\bigg[\int_{t_0}^T \left(
		%-\frac12 |\alpha^n_+ - ( \Delta^n_+ u)_-|^2 
		%-\frac12 |\alpha_-^n - ( \Delta^n_- u)_-|^2 
		%+ \frac12 |\alpha_+^n|^2 + \frac12 |\alpha_+^n|^2 
		%\right)(s, X^n_s) +f(X^n_s, \tilde{\mu}^n_s) + r^n(s, X^n_s)ds \bigg],
		%\end{align*}
		%which yields
		\be 
		\label{eq:521}
		\E \int_t^T \frac1C |\alpha^n - \tilde{\alpha}^n|^2(s,X_s) ds \leq 
		\tilde{J}^n( \alpha^n, \tilde{\mu}^n) - \E[u(t_0,\xi^n)].
		\ee 
		%and also  
		%\begin{align*}
		%\E[u(t_0,\xi^n)] &= \tilde{J}^n(\tilde{\alpha}^n, \tilde{\mu}^n) \\
		%&= \E \bigg[ g(\tilde{X}^n_T,\tilde{\mu}^n_T) +r^n(T,\tilde{X}^n_t)  +\int_{t_0}^T \left(
		%\frac12 |\tilde{\alpha}_+^n|^2 + \frac12 |\tilde{\alpha}_+^n|^2 
		%\right)(s, \tilde{X}^n_s) +f(\tilde{X}^n_s, \tilde{\mu}^n_s) +r^n(s,\tilde{X}^n_s) ds \bigg] .
		%\end{align*}

		Summing \eqref{eq:511} and \eqref{eq:521}, and then using the monotonicity assumption, we obtain (recalling that 
		$\Law(X^n_s)=\mu_s$ and $\Law(\tilde{X}^n_s)= \tilde{\mu}^n_s$)
		\begin{align*}
			\E \int_{t_0}^T &\frac1C |\alpha^n - \tilde{\alpha}^n|^2(s,X^n_s) + \frac1C |\alpha^n - \tilde{\alpha}^n|^2(s,\tilde{X}^n_s) ds  \\
			&\leq  J^n( \tilde{\alpha}^n, \mu^n) - J^n(\alpha^n, \mu^n) 
			+\tilde{J}^n( \alpha^n, \tilde{\mu}^n) - \tilde{J}^n(\tilde{\alpha}^n, \tilde{\mu}^n)  \\
			&= \E  \bigg[\int_{t_0}^T \bigg(
			f(\tilde{X}^n_s,\mu^n_s) -f(X^n_s,\mu^n_s) 
			+f(X^n_s,\tilde{\mu}^n_s) - f(\tilde{X}^n_s,\tilde{\mu}^n_s)
			+r^n(s, X^n_s) -r^n(s, \tilde{X}^n_s)
			\bigg) ds \\
			& \qquad +g(\tilde{X}^n_T,\mu^n_T) -g(X^n_T,\mu^n_T) 
			+g(X_T,\tilde{\mu}^n_T) - g(\tilde{X}^n_T,\tilde{\mu}^n_T)
			+r^n(T, X^n_T) -r^n(T, \tilde{X}^T_s)
			\bigg] \\
			&\leq
			\int_{t_0}^T ds \int_\T 
			\big( f(x,\mu^n_s) - f(x,\tilde{\mu}^n_s \big) 
			( \tilde{\mu}^n_s -\mu^n_s)(dx)  
			+ \int_{\T} \big(g(x,\mu^n_T) - g(x,\tilde{\mu}^n_T \big) 
			( \tilde{\mu}^n_T -\mu^n_T)(dx)  
			+\frac{C}{n^{\frac13}}\\
			&\leq \frac{C}{n^{\frac13}},
		\end{align*}
		which provides 
		\be
		\label{est:distn}
		\E \int_{t_0}^T  |\alpha^n - \tilde{\alpha}^n|^2(s,X^n_s) + |\alpha^n - \tilde{\alpha}^n|^2(s,\tilde{X}^n_s) ds 
		\leq \frac{C}{n^{\frac13}}
		\ee

		\textbf{Step 3}.
		We can now estimate the distance between $X^n$ and $\tilde{X}^n$. Since $\partial_x  u$ is Lipschitz continuous in $x$, so are $\Delta^n u$ and $\tilde{\alpha}_{\pm}^n$, because $\partial_p H$ is locally Lipschitz in $(x,p)$,  and thus,
		applying \eqref{est:distn} and Jensen's inequality, we  obtain 
		\begin{align*}
			\E \bigg[ \sup_{t_0 \leq s\leq t} |  X^n_t-\tilde{X}^{n}_t | \bigg]
			&\leq    
			\E  \int_{t_0}^t |\alpha^n_+(s, X^n_s) - \tilde{\alpha}^n_+(s,\tilde{X}^n_s)| + 
			|\alpha^n_-(s, X^n_s) - \tilde{\alpha}^n_-(s,\tilde{X}^n_s)|ds \\
			& \leq  + C \E  \int_{t_0}^t | X^n_s -\tilde{X}^n_s| ds 
			+ C \sqrt{\E \int_{t_0}^T |\alpha^n - \tilde{\alpha}^n|^2 (s, X^n_s) ds } \\
			&\leq \frac{C}{n^{\frac16}}  
			+ C  \int_{t_0}^t \E \bigg[ \sup_{t_0 \leq r\leq s} | X^n_s -\tilde{X}^n_s| \bigg] ds 
		\end{align*}
		and hence Gronwall's lemma yields 
		\be 
		\label{eq:74}
		\E \bigg[ \sup_{t_0\leq s\leq T}  | X^n_s- \tilde{X}^{n}_s | \bigg] 
		\leq \frac{C}{n^{\frac16}}.
		\ee 
		This estimate (if the processes start at 0), together with \eqref{rate:W1}, provides \eqref{conv:63}, while with \eqref{conv:65} it proves \eqref{conv:64}.

		\textbf{Step 4}.
		Finally, in order to estimate $|u^n(t,x) -u(t,x)|$, let $J^n(t_0,x,\beta,\mu^n)$ be the cost \eqref{cost:n} where $\mu^n$ is fixed and the dynamics starts at $(t_0,x)$ with control $\beta$, and similarly 
		$\tilde{J}^n(t_0,x,\beta,\tilde{\mu}^n)$ for the cost in \eqref{cost:ntilde}. Clearly $u^n(t_0,x) = \inf_{ \beta} J^n(t_0,x,\beta,\mu^n)$ and $u(t_0,x) = \inf_{ \beta} \tilde{J}^n(t_0,x,\beta,\tilde{\mu}^n)$, the infimum being over open-loop controls. Let $\hat{\beta}$ be an optimal control for $J^n(t_0,x,\beta,\mu^n)$ with corresponding process $X^{n,x}$, with $X^{n,x}_{t_0}=x$. We get, applying the $W_1$-Lipschitz-continuity  of $f$ and $g$ and \eqref{eq:74},
		\begin{align*}
			u(t_0,x) -u^n(t_0,x) &\leq \tilde{J}^n(t_0,x,\hat{\beta}, \tilde{\mu}^n) -J^n(t_0,x,\hat{\beta}, \mu^n)  \\
			&  \leq C \sup_{t_0\leq s\leq T} W_1( \mu^n_s, \tilde{\mu}^n_s)  + 
			\frac{C}{n^{\frac13}}\\
			& \leq \frac{C}{n^{\frac16}},
		\end{align*}
		where we have also used the uniform bound on $r^n$ which appears in $\tilde{J}$. By changing the roles of $\mu^n$ and $\tilde{\mu}^n$ we derive also the opposite inequality, thus 
		\be 
		\label{eq:75}
		|u^n(t_0, x) - u(t_0, x)| \leq \frac{C}{n^{\frac16}},
		\ee 
		where $C$ is independent of $t_0, x, m_0$.

		Finally, to obtain \eqref{conv:62}, recall that $u^n(t_0, x) = U^n(t_0, x, m^n_0)$ and $u(t_0,x) = U(t_0, x, m_0)$. Therefore \eqref{eq:75} and the Lipschitz continuity of $U$ in $m$ provide \eqref{conv:62}.
	\end{proof}

	\section{The case of common noise}
	\label{sec:4}
	We now turn to a case of MFG involving a common noise. As already mentioned the approach here is quite different. Namely we make an extensive use the stability of monotone solutions introduced in \cite{bertucci2021monotone,bertucci2021monotone2}. We postpone recalling the definitions of monotone solutions and we now present the master equations  we are interested in.
	
	The master equation in the continuous state space is
	\be
	\label{masterc}
	\begin{split}
		&-\partial_t U - \sigma\partial_{xx} U + H(x,\partial_x U) 
		+ \int_{\T} \partial_pH(x,\partial_x U(t,y,m)) D_m U(t,x,m; y) m(dy)   \\
		& \qquad -\sigma\int_{\T} \partial_y D_m U(t,x,m;y) m(dy) + \lambda (U - \mathcal{A}^*U(t,x,\mathcal{A}m))= f(m)(x)  \text{ in } (0,T)\times \mathbb{T}\times\mathcal{P}(\mathbb{T})\\
		& U(T,x,m) = g(x,m)  \text{ in } \mathbb{T}\times \mathcal{P}(\mathbb{T}).
	\end{split}
	\ee
	It is very similar to the one at interest in the previous section, except for the presence of terms modeling common noise or common shocks. The common noise is here similar to the one introduced in \cite{bertucci2019some,bertucci2021monotone2}. At random times, of intensity $\lambda > 0$, all the players are affected by the map $\mathcal{A} : \mathcal{P}(\mathbb{T}) \to \mathcal{P}(\mathbb{T})$, and $\mathcal{A}^*$ is the adjoint of $\mathcal{A}$. To fix ideas, mainly for the discretization, we take $\mathcal{A}$ of the form
	\be
	\mathcal{A}m := \int_{\mathbb{T}}K(\cdot,y)m(dy),
	\ee
	where $K : \mathbb{T}^2 \to \R$ is a smooth function which satisfies $K \geq 0, \forall y, \int K(x,y)dx =1$.\\
	
	%In this section we shall only be concerned with the first discretization introduced earlier. That is, 
	At the discrete state level, we are hence interested in the following master equation
	\be 
	\label{master:nc}
	\begin{split}
		&-\partial_t U^n(x,m)   +H_{\uparrow}(x,\Delta_+^n U^n(x,m)) +H_{\downarrow} (x,-\Delta_-^n U^n(x,m)) - \sigma\Delta_2^n U^n(x,m) -f(m)(x) \\
		& \qquad -\sum_{y\in S^n} m_y  \left( \frac{-\partial_pH_{\uparrow}(y,(\Delta_+^n U^n(y,m))}{\dn} +\frac{\sigma}{\dn^2} \right) \left( \partial_{m_{y+\dn}} U^n(x,m) - \partial_{m_y} U^n(x,m)\right) \\
		& \qquad - \sum_{y\in S^n} m_y  \left( \frac{\partial_pH_{\downarrow}(y,-\Delta_-^n U^n(y,m))}{\dn} +\frac{\sigma}{\dn^2} \right) \left( \partial_{m_{y-\dn}} U^n(x,m) - \partial_{m_y} U^n(x,m)\right) \\
		&\qquad + \lambda(U - A_n^*U(t,A_nm))=0 \text{ in } (0,T)\times S^n\times \mathcal{P}(S^n),\\
		& U(T,x,m) = g(m)(x)  \text{ in } S^n \times \mathcal{P}(S^n),
	\end{split}
	\ee 
	where $A_n$ is the discretization of $\mathcal{A}$ given by
	\be
	(A_nm)_i = \frac{1}{n}\sum_jK_{i,j}m_j,
	\ee
	where $(K_{i,j})_{1 \leq i,j\leq n}$ is a suitable discretization of $K$ such that for all $1\leq j\leq n$, $\sum_i K_{i,j} = n$. To lighten the notation of the following, we also introduce the operators $F^n,G^n : \mathcal{P}(S^n)\times \R^n$ defined by 
	\be
	\begin{aligned}
		(G^n(m,p))_i &= f(x_i,m)  -H_{\uparrow} (x_i,\Delta_+^n p(x_i)) -H_{\downarrow} (x_i,-\Delta_-^n p(x_i)) + \sigma\Delta_2^n p(x_i)\\
		(F^n(m,p))_i &= -\sum_{j = 1}^n \partial_{p_i}(G^n(m,p))_jm_j,
	\end{aligned}
	\ee
	where $p\in \R^n$ is interpreted as a real function $S^n \to \R$. Using these notations, \eqref{master:nc} can be written
	\be
	-\partial_t U(t,\cdot,m) + (F^n(m,U)\cdot \nabla_m) U + \lambda(U - A_n^*U(t,A_nm)) = G^n(m,U)  \text{ in } (0,T)\times S^n\times \mathcal{P}(S^n).
	\ee
	
	In this part, we are slightly more restrictive with the coupling $f$ and the initial condition $g$. We assume that the following space regularity and stronger monotonicity, which will be in force throughout the section:   
	
	\begin{ass}
		\label{A}
		In addition to the standing assumptions in \S 2.2, we assume that $K$ is a smooth function and 
		% Assume that $g$ is monotone, 
		%$f$ \red{is strictly monotone} and that, 
		for $\alpha \in (0,1]$
		\be
		\begin{aligned}
			\sup_{m } \|f(\cdot,m)\|_{1+ \alpha} + \sup_{m,\nu} \frac{\| f(\cdot,m) - f(\cdot,\nu)\|_{1 + \alpha}}{W_1(m,\nu)} < \infty,\\
			\sup_{m } \|g(\cdot,m)\|_{2+ \alpha} + \sup_{m,\nu} \frac{\| g(\cdot,m) - g(\cdot,\nu)\|_{2 + \alpha}}{W_1(m,\nu)} < \infty.
		\end{aligned}
		\ee
		We also assume that $f$ and $g$ are defined on $\mathcal{M}(\mathbb{T})$ and take values in $\mathcal{C}(\mathbb{T},\mathbb{R})$. We assume that both $f$ and $g$ are differentiable with respect to $m$. The derivative of $f$ with respect to $m$ is given by
		\[
		\frac{\delta f(x,m,y)}{\delta m} = \lim_{h \to 0^+}\frac{f(x,m + h \delta_y) - f(x,m)}{h}.
		\]
		The same holds for $g$. Finally we also assume that there exists $\theta > 0$ such that for all $\nu \in L^2(\T)$
		\begin{equation}\label{eq:strongmonotonicity}
			\int_{\mathbb{T}}\int_{\mathbb{T}}\frac{\delta f(x,m,y)}{\delta m}\nu(x)\nu(y)dxdy \geq \theta \int_\mathbb{T}\left( \int_\mathbb{T}  \frac{\delta f(x,m,y)}{\delta m} \nu(y)dy\right)^2dx.
		\end{equation}
		We also assume that the same holds for $g$.
	\end{ass}

	\begin{rem}
		It is in fact the inequality \eqref{eq:strongmonotonicity} which is important, more than the differentiability of $f$. This assumption is needed in the proof of Proposition \ref{prop:compact}. We could replace this assumption by saying that $f$ and $g$ are the limit of smooth functions satisfying \eqref{eq:strongmonotonicity} uniformly. We prefer to state the assumption in this way so that Proposition \ref{prop:compact} is proved by differentiating the equation, otherwise the proof would be more tedious. 
		
		We remark that we do not assume that the derivatives of $f$ and $g$ are continuous, thus the master equation does not admit classical solutions.   
		
		We finally remark that, even though we extend $f$ and $g$ to the set of all measures, the solutions to the master equations are defined on the set of probability measures only.
	\end{rem}

	\subsection{Monotone solutions of master equations}
	The following is a brief reminder on monotone solutions. We refer to \cite{bertucci2021monotone,bertucci2021monotone2} for details on this concept. We start with the definitions.
	
	\begin{defn}\label{def:monotonec}
		A continuous function $U$, $C^2$  in the space variable $x$, is a monotone solution of \eqref{masterc} if for any measure $\nu \in \mathcal{M}(\mathbb{T})$, $C^2$ function $\phi$ of the space variable and $C^1$ function $\psi$ of the time variable, for any $(t_*,m_*) \in [0,T)\times\mathcal{P}(\mathbb{T})$ point of strict minimum of $(t,m) \to \langle U(t,m) - \phi, m - \nu\rangle - \psi(t)$, the following holds
		\be
		\begin{aligned}
			-\frac{d \psi(t_*)}{dt} + \langle -\sigma\partial_{xx} U + &H(x,\partial_xU) + \lambda(U - \mathcal{A}^*U(\mathcal{A}m_*)), m_* - \nu\rangle \geq \langle f(m_*), m_* - \nu\rangle-\\
			& - \langle U-  \phi, \partial_x(\partial_pH(\cdot,\partial_xU) m_*)\rangle - \sigma\langle \partial_{xx}(U-\phi),m_*\rangle.
		\end{aligned}
		\ee
	\end{defn}
	The same type of definition holds at the discrete level and it is given in this situation by
	\begin{defn}\label{def:monotonef}
		For $n > 0$, a continuous function $U$ is a monotone solution of \eqref{master:nc} if, for any $\nu \in \mathcal{M}(S^n)$, $a \in \mathbb{R}^n$, $C^1$ function of the time $\psi$, and $(t_*,m_*) \in [0,T)\times \mathcal{P}(S^n)$ point of strict minimum of $(t,m) \to \langle U(t,m) - \phi, m - \nu\rangle - \psi(t)$, the following holds
		\be
		\begin{aligned}
			-\frac{d \psi(t_*)}{dt} + \lambda\langle U - A_n^*U(A_n m_*), m_* - \nu\rangle &\geq \langle G^n(m_*,U(t_*,m_*)), m_* - \nu\rangle \\
			&+ \langle F^n(m_*,U(t_*,m_*)),U(t_*,m_*)- a\rangle.
		\end{aligned}
		\ee
	\end{defn}
	
	The previous concept of solution possesses several properties. First we can mention that under the standing assumptions on the monotonicity of $f$ and $g$, there is at most one monotone solution of either \eqref{masterc} or \eqref{master:nc}. Those solutions also enjoy several stability properties, in some sense, this part on the convergence of the discrete state space toward the continuous one is an illustration of this fact. We continue this section with results of existence of such solutions.
	
	\begin{prop}\label{prop:existmaster}
		%In addition to the standing assumptions, assume that
		% Assume that $g$ is monotone, 
		%$f$ \red{is strictly monotone} and that, 
		%for $\alpha, \beta \in (0,1]$
		%\be
		%\begin{aligned}
		%\sup_{m } \|f(\cdot,m)\|_{1+ \alpha} + \sup_{m,\nu} \frac{\| f(\cdot,m) - f(\cdot,\nu)\|_{1 + \alpha}}{W_1(m,\nu)^{\beta}} < \infty,\\
		%\sup_{m } \|g(\cdot,m)\|_{2+ \alpha} + \sup_{m,\nu} \frac{\| g(\cdot,m) - g(\cdot,\nu)\|_{2 + \alpha}}{W_1(m,\nu)^{\beta}} < \infty.
		%\end{aligned}
		%\ee
		%Assume that $K$ is a smooth function. 
		{Under Asssumption \ref{A},} then there exists a unique monotone solution to both \eqref{masterc} and \eqref{master:nc}. {For the continuous master equation, we say that a solution $U$ is unique if another solution has the same spatial gradient.}
		
		{Moreover, the unique monotone solution $U$ of \eqref{masterc} is such that $\nabla_x U$ and $\Delta_x U$ are continuous on $[0,T]\times \T\times \mathcal{P}(\mathbb{T})$. }
	\end{prop}
	\begin{proof}
		For the continuous state space, a similar result can be found in \cite{bertucci2021monotone2} and for the discrete case, a similar result is in \cite{bertucci2021monotone}. In both cases, the only difference lies in the fact that the Hamiltonian can have a quadratic growth. We leave to the reader these immediate generalizations. {The proof of the last statement is in \cite{bertucci2021monotone2}. }
	\end{proof}
	
	{We refer to Remark \ref{rem:uniq:mono} below for uniqueness result for the function and not only the gradient.  We also remark} that the definition of monotone solution requires some regularity with respect to the space variable in the continuous case whereas, obviously, no such assumption is needed in the finite state case. %\red{An important consequence of this fact is that, stated as it is, some uniform continuity of the spatial derivatives with respect to the measure variable are needed. If such results on the first order derivatives of the solutions are fairly easy to obtain, they require slight additional assumptions for second order derivatives.}  
	Even if the setting of Proposition \ref{prop:existmaster} is sufficient to obtain such result, we mention here this difficulty for two reasons. The first one is to explain why this questions shall pop out in the study of the convergence of the master equations, namely because we are going to use this property for the limit equation. Secondly because we believe that this point is of some importance and we shall explain how it can be dealt with in another manner later on in this part. {We refer to \S \ref{sec:4.5} for the details on how to deal with the space regularity of the master equations; in particular, the continuity of the second derivative will be avoided.}  
	
	%We end this section with the following.
	
	%\begin{prop}\label{prop:uniformcont}
	%Under the assumptions of Proposition \ref{prop:existmaster}, the unique monotone solution $U$ of \eqref{masterc} is such that $\nabla_x U$ and $\Delta_x U$ are continuous on $[0,T]\times \mathcal{P}(\mathbb{T})$.
	%\end{prop}
	%The proof of this statement is in \cite{bertucci2021monotone2}.
	
	{The following is the main result on the convergence of monotone solutions:}
	
	\begin{thm}
		\label{thm:main:mono}
		 Let $U^n$ be the monotone solution to \eqref{master:nc} and $U$ be the monotone solution to \eqref{masterc}. 
		Under Assumption \ref{A}, we have uniform convergence of $U^n$ to $U$ and of their gradients: 
		\be 
		\begin{split}
			&\lim_{n \to \infty} \sup  \{|U^n(t,x,m) - U(t,x,m)|, (t,x,m) \in [0,T]\times S^n \times \mathcal{P}(S^n)\} = 0 \\
			&\lim_{n \to \infty} \sup  \{|\Delta_{\pm}^nU^n(t,x,m) \mp \partial_x U(t,x,m)|, (t,x,m) \in [0,T]\times S^n \times \mathcal{P}(S^n)\} = 0
		\end{split} 
		\ee 
	\end{thm}
	{The theorem is proved in the following sections: 
		the compactness result is in Proposition \ref{prop:compact}, which relies on the main uniform in $n$ estimate on the discrete problem in Theorem \ref{thm:estimate}, while the identification of the limit point is in Theorem \ref{thm:conv:18}. }

	\subsection{A discrete parabolic estimate}
	In this section we present estimates on the semi discrete heat equation, that is discretized in space but not in time. These estimates, in the flavor of parabolic regularity,  are at the same time, fundamental to obtain compactness on the sequence $(U_n)_{n \geq 0}$ of solutions of \eqref{master:nc} {(see next Theorem \ref{thm:estimate})}, quite technical to establish, and not particularly interesting in  themselves  since much more involved results are already well-known in the continuous setting. However, because we could not find sufficiently similar results in the literature, we take some time to explain the proof of such a result.
	
	Our aim is to establish regularity results on the ODE
	\be\label{discretehe} \begin{aligned}
		\dot{u}(t) = \Lambda u + f(t),\\
		u(0) = g,
	\end{aligned}
	\ee
	where $\Lambda$ is defined by
	\[
	\Lambda= n^2\begin{pmatrix}
		-2 & 1 &\dots &\dots  & 1 \\
		1 & -2 & 1 &\dots &\dots \\
		0 & 1 & -2 & 1 &\dots \\ 
		\dots & \dots & \dots & \dots & \dots \\
		1 &\dots &\dots &1 & -2 
	\end{pmatrix} \]
	Clearly $\Lambda$ is a space discretization of the Laplacian operator. We prove the following.
	\begin{thm}\label{thm:discretehe}
		Assume that $g$ is the evaluation of a smooth function on $S^n$. If $f$ is uniformly bounded by a constant $C$, then, the solution $u$ of \eqref{discretehe} satisfies
		
		\be\label{eq:hold1}
		|n^{\alpha-1} \Lambda u_i(t)| \leq C,
		\ee
		for a constant $C$ independent of $n$ and any $\alpha \in (0,\frac{1}{2})$. \\
		If $f$ satisfies for constants $C \geq  0$ and $\alpha  \in (\frac{1}{2},1]$,
		\be\label{eq:hold2}
		n^{\alpha}\left| f_i(t) - f_{i+1}(t)\right| \leq C,
		\ee
		then, the solution $u$ of \eqref{discretehe} satisfies that $\Lambda u$ is bounded by a constant independent of $n$.
	\end{thm}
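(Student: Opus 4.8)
The plan is to use the variation-of-constants formula and read the estimates off the spectral resolution of $\Lambda$. Being a symmetric circulant matrix, $\Lambda$ is diagonalized by the discrete Fourier basis $e_k(j)=e^{2\pi\mathrm{i}kj/n}$, orthonormal for $\langle u,v\rangle_n=\frac1n\sum_i u_iv_i$, with eigenvalues $\lambda_k=-4n^2\sin^2(\pi k/n)$; thus $\lambda_0=0$, $|\lambda_k|\sim k^2$ for $k\ll n$, and $|\lambda_k|\sim n^2$ near $k=n/2$. Equivalently $e^{t\Lambda}$ is the conservative transition semigroup of the random walk generated by $\Lambda$, so it is a contraction on $\ell^\infty$, $e^{t\Lambda}\bm1=\bm1$, and $\Lambda\bm1=0$ — i.e. the kernel of $\Lambda e^{\tau\Lambda}$ has vanishing row sums. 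Duhamel gives $\Lambda u(t)=e^{t\Lambda}\Lambda g+\int_0^t\Lambda e^{(t-s)\Lambda}f(s)\,ds$. The data term is controlled uniformly in $n$ by $\|e^{t\Lambda}\Lambda g\|_\infty\le\|\Lambda g\|_\infty\le C$, since $g$ is the evaluation of a smooth function and $\Lambda g$ is then a consistent approximation of $g''$; everything reduces to the source term.

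For \eqref{eq:hold1} I would estimate the source in the normalized $L^2_n$ norm and then embed into $\ell^\infty$. Mode by mode, $\widehat{\Lambda u}_k(t)=\lambda_k\int_0^t e^{\lambda_k(t-s)}\hat f_k(s)\,ds$ (plus data), and since $|\lambda_k|\int_0^\infty e^{-|\lambda_k|\tau}\,d\tau=1$ for $k\neq0$, one gets the clean bound $|\widehat{\Lambda u}_k(t)|\le\sup_s|\hat f_k(s)|$. Parseval then yields $\|\Lambda u(t)\|_{L^2_n}\le C\|f\|_\infty$ uniformly in $n$; the point is that the singular smoothing $\|\Lambda e^{\tau\Lambda}\|_{2\to2}\le(e\tau)^{-1}$, which would make a naive triangle inequality in time diverge like $\int_0^t(t-s)^{-1}\,ds$, becomes harmless once the modes are kept orthogonal. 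The embedding $\|v\|_\infty\le\sqrt n\,\|v\|_{L^2_n}$ (Cauchy–Schwarz over the $n$ modes) gives $\|\Lambda u(t)\|_\infty\le C\sqrt n\le Cn^{1-\alpha}$ for every $\alpha<\frac12$. The restriction $\alpha<\frac12$ is exactly the loss in the $L^2_n\hookrightarrow\ell^\infty$ embedding on $n$ points.

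For the second statement the gain is to replace this lossy embedding by the lossless one-dimensional discrete Sobolev embedding: with $\|v\|_{H^s_n}^2=\sum_k(1+|\lambda_k|)^s|\hat v_k|^2$ one has $\|v\|_\infty\le\big(\sum_k(1+|\lambda_k|)^{-s}\big)^{1/2}\|v\|_{H^s_n}$, and the prefactor is bounded uniformly in $n$ precisely because $\sum_k(1+|\lambda_k|)^{-s}\sim\sum_k(1+k^2)^{-s}$ converges if and only if $s>\frac12$. The per-mode bound of the previous step upgrades verbatim to $\|\Lambda u(t)\|_{H^s_n}\le C\sup_s\|f(s)\|_{H^s_n}$, so it remains to turn the Hölder hypothesis \eqref{eq:hold2} into a uniform-in-$n$ bound $\|f(s)\|_{H^s_n}\le C$ for some $s\in(\frac12,\alpha)$. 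This is where the regularity of $f$ enters and where the cancellation $\Lambda\bm1=0$ is used (letting the kernel act on the differences $f_j-f_i$): a genuine discrete Hölder-$\alpha$ modulus embeds into $H^s_n$ for every $s<\alpha$ with constant independent of $n$, and such an $s$ with $s>\frac12$ exists if and only if $\alpha>\frac12$. Embedding then gives $\|\Lambda u(t)\|_\infty\le C$ uniformly. The threshold $\alpha=\frac12$ is thus the one-dimensional Sobolev/trace exponent.

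The main obstacle, common to both parts, is to obtain parabolic regularity \emph{uniformly in $n$} despite the short-time singularity of $\Lambda e^{\tau\Lambda}$: one cannot integrate $(t-s)^{-1}$ in time, and must instead exploit either the mode-by-mode orthogonality above or, equivalently, the exact cancellation of the heat kernel's row sums together with its first-moment decay $\sum_r|r|\,|k_\tau(r)|\lesssim\min(n^2,\,n\tau^{-1/2})$ integrated against the Hölder modulus. Two technical points carry the real work: justifying the interchange of the time-supremum with the spectral sum (which needs the locally uniform continuity of $t\mapsto f(t)$), and, for \eqref{eq:hold2}, verifying that the Hölder condition controls $\|f\|_{H^s_n}$ with an $n$-independent constant for $s<\alpha$ — the sharp short-time kernel bounds being the crux there. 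Finally, the whole mechanism relies on the non-degeneracy of the scheme, i.e. on the presence of $\Lambda$ and its smoothing: without the discrete Laplacian no such regularization, and hence no bound of this type, is available.
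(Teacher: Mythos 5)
Your skeleton is essentially the paper's: diagonalize $\Lambda$, write Duhamel, use the per-mode identity $|\lambda_k|\int_0^t e^{-|\lambda_k|(t-s)}\,ds\le 1$ to neutralize the non-integrable smoothing $\|\Lambda e^{\tau\Lambda}\|\lesssim \tau^{-1}$, then convert mode-by-mode information into an $\ell^\infty$ bound, with the threshold $\alpha=\tfrac12$ coming from summability over modes (the paper implements this with the periodic linear interpolation of $f$ and classical Fourier-series dyadic block estimates from \cite{katznelson}, plus aliasing, rather than your discrete Fourier transform and discrete $H^s_n$ spaces; that difference is inessential). However, there is a genuine flaw in your aggregation step, in both parts: from $|\widehat{\Lambda u}_k(t)|\le\sup_s|\hat f_k(s)|$ you claim that ``Parseval then yields'' $\|\Lambda u(t)\|_{L^2_n}\le C\|f\|_\infty$, and that this ``upgrades verbatim'' to $\|\Lambda u(t)\|_{H^s_n}\le C\sup_s\|f(s)\|_{H^s_n}$. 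Both inequalities are false: they require pulling $\sup_s$ out of the sum over modes, i.e.\ $\sum_k\sup_s|\hat f_k(s)|^2\le C\sup_s\sum_k|\hat f_k(s)|^2$, which fails. Concretely, let $k_j=2^j$ for $j\le J\approx\log_2 n$, let $I_j$ be disjoint intervals of length $\sim\lambda_{k_j}^{-1}$ placed at distance $\sim\lambda_{k_j}^{-1}$ from $t$, and let $f(s)$ equal the real mode $\cos(2\pi k_j\,\cdot/n)$ for $s\in I_j$ and zero otherwise; then $\|f\|_\infty\le1$ while each mode $k_j$ contributes at least a universal constant to $|\widehat{\Lambda u}_{k_j}(t)|$, so $\|\Lambda u(t)\|_{L^2_n}^2\gtrsim\log n$. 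The correct general bound, obtained by Cauchy--Schwarz with weight $e^{-|\lambda_k|(t-s)}$, namely $|\widehat{\Lambda u}_k(t)|^2\le|\lambda_k|\int_0^t e^{-|\lambda_k|(t-s)}|\hat f_k(s)|^2\,ds$, followed by summation in $k$ and pointwise-in-time Parseval, carries exactly this logarithmic factor. Your proposed remedy (local uniform continuity of $t\mapsto f(t)$) does not repair this, as the resulting constant would not be $n$-independent.

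The damage is unequal in the two parts. For \eqref{eq:hold1} nothing is lost: since $\alpha<\tfrac12$ is strict, the corrected bound $\|\Lambda u(t)\|_{L^2_n}\le C\sqrt{\log n}\,\|f\|_\infty$ still gives $\|\Lambda u(t)\|_\infty\le C\sqrt{n\log n}\le Cn^{1-\alpha}$, so your argument survives with a one-line patch. For \eqref{eq:hold2}, the endpoint has no slack, and as written you only get $\|\Lambda u\|_\infty\le C\sqrt{\log n}$, which misses the claim. The repair is available to you precisely because your H\"older-to-Sobolev step controls $\|f(s)\|_{H^{s'}_n}$ uniformly for \emph{every} $s'<\alpha$: fix $\tfrac12<s<s'<\alpha$ and use the smoothing estimate $\|\Lambda e^{\tau\Lambda}\|_{H^{s'}_n\to H^{s}_n}\le C\tau^{-1+(s'-s)/2}$, which is integrable in $\tau$, so Minkowski's inequality closes the argument with no sup/sum interchange at all. (The paper sidesteps the same issue differently: its dyadic blocks group coefficients whose eigenvalues are mutually comparable, so the time-uniform block bounds $\sum_{|p|\sim 2^m}|c_p(s)|\le C2^{m(1/2-\alpha)}$ can be used pointwise in $s$ before integrating, block by block.) Finally, your phrase ``genuine discrete H\"older-$\alpha$ modulus'' is doing real work and should be made explicit: the literal hypothesis \eqref{eq:hold2} controls only nearest-neighbour increments, and a grid function such as the tent $f_i=n^{-\alpha}\min(i,n-i)$ satisfies it while its mean-free $H^{s}_n$ norm is of order $n^{1-\alpha}$; both your argument and the paper's implicitly use the all-scales H\"older bound, which is what actually holds in the application.
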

	\begin{rem}
		The inequality \eqref{eq:hold1} is a sort of $\alpha$-H\"older estimate on the discrete spatial gradient of $u$ and the inequality \eqref{eq:hold2} is a sort of $\alpha$-H\"older estimate on $f$.
	\end{rem}
	\begin{proof}
		Let $\tilde{f} : \mathbb{R} \to \mathbb{R}$ be the function which is $1$-periodic, and the linear interpolation of $f$ on $[0,1]$ with $\tilde{f}(\frac i n) = f_i$. Let us note $(c_k)_{k \in \mathbb{Z}}$ the Fourier exponents of $\tilde{f}$. Because $\tilde{f}$ is continuous, we deduce that it is the sum of its Fourier series, hence
		\be
		f_j = \sum_{k \in \mathbb{Z}}c_k e^{\frac{2ik\pi j}{n}},
		\ee
		Let us define $Q_{kj} = n^{-\frac 1 2} e^{\frac{2i\pi kj}{n}}$ and $\lambda_k = 2(1 - \cos(\frac{2k\pi}{n}))$. The vector $Q_k$ is an eigenvector of $\Lambda$ associated to the eigenvalue $\lambda_k$.  On the other hand, $u$ is given by
		\be
		\begin{aligned}
			u(t) = e^{t \Lambda}g + \int_0^te^{(t-s)\Lambda}f(s)ds.
		\end{aligned}
		\ee
		From which we deduce that 
		\be
		\begin{aligned}
			(\Lambda u(t))_l &= (\Lambda e^{t\Lambda}g)_l - \int_0^t\sum_{k,j} Q_{kl}Q_{kj}\lambda_ke^{-\lambda_k(t-s)}f_j(s)ds\\
			& = (\Lambda e^{t\Lambda }g)_l - \int_0^t\sum_{k,j =1}^n Q_{kl}Q_{kj}\lambda_ke^{-\lambda_k(t-s)}\sum_{p \in \mathbb{Z}}c_p(s) e^{\frac{2i\pi jp}{n}}ds\\
			& =(\Lambda e^{t\Lambda}g)_l - \int_0^t\sum_{k=1}^n e^{\frac{2ikl\pi}{n}}\lambda_ke^{-\lambda_k(t-s)}\sum_{j \in \mathbb{Z}}c_{nj - k}(s) ds\\
		\end{aligned}
		\ee
		We can now observe that estimates on $\Lambda u$ can be obtained by using decay assumptions on the Fourier coefficients of $\tilde{f}$. Unlike for classical regularity ($C^k$ for $k \in \mathbb{N}$), H\"older regularity does not immediately translates into a decay of the Fourier coefficients but rather on their cumulative sum. Namely, if $\tilde{f} \in C^{\alpha}$ for $\alpha \in (0,1)$, {for any $m\in\mathbb{N}$ it holds}
		\be
		\sum_{|k| \in [2^m, 2^{m+1}]} |c_k(t)|\leq C 2^{m\left(\frac{1}{2} - \alpha\right)};
		\ee
		see for instance the first chapter of \cite{katznelson}. In particular, the series is summable if $\alpha > \frac{1}{2}$, which gives the second part of the result. The first part is obtained by remarking that since $\tilde{f}$ is, uniformly in $n$, in $L^{\infty}$, the sequence $(c_k(t))_{k \in \mathbb{Z}}$ is, uniformly in $t$ and $n$, bounded in $\ell^2$. Thus multiplying both sides of the last equation by $n^{\alpha -1}$, we deduce the required result.
	\end{proof}

	\subsection{An estimate on the solution of the master equation without common noise} 
	
	We use the previous estimate to derive an estimate on the solution of \eqref{master:n}, that is \eqref{master:nc} in the case $\lambda= 0$. This estimate is crucial to obtain its counterpart in the case of a common noise. Indeed, as we shall see, thanks to the form of the common noise, we shall reduce the common noise case to the deterministic one. We use the 1-Wasserstein distance $W_1$ on $\mathcal{P}(\T)$ and recall that $U^n$ is evaluated on measures of the form $m=\sum_{j=1}^n m_j \delta_{\frac{j}{n}}$. 
	
	\begin{thm}\label{thm:estimate}
		If $f$ and $g$ satisfy the assumptions of Proposition \ref{prop:existmaster} with $\beta = 1$, then there exists a constant $C$ independent of $n$ such that
		\be 
		\label{estimate:n} 
		|U^n(t,x,m) - U^n(\tilde{t}, \tilde{x}, \tilde{m})| \leq C\left(
		\sqrt[4]{|t-\tilde{t}|} +|x-\tilde{x}| + \sqrt{W_1(m, \tilde{m}) } \right)
		\ee  
		for any $t, \tilde{t} \in [0,T]$, $x,\tilde{x} \in S^n$, $m = \sum_{j=1}^n m_j \delta_{x_j}, \tilde{m}=\sum_{j=1}^n \tilde{m}_j \delta_{x_j} \in \mathcal{P}(S^n)$. Moreover, the discrete gradient of $U^n$, $\Delta_+^n U^n$ also satisfies the same estimate.
	\end{thm}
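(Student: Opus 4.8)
The plan is to exploit the representation of $U^n$ by its characteristics: for fixed $(t,m)$ the map $x\mapsto U^n(t,x,m)$ equals $u^n(t,x)$, where $(u^n,\mu^n)$ solves the discrete MFG system \eqref{mfgsystem:n} started at $(t,m)$. Then \eqref{estimate:n} splits into a spatial, a measure and a time modulus, plus the analogous claim for $\Delta_+^n U^n$. The spatial Lipschitz bound (the $|x-\tilde x|$ term) is immediate from Lemma \ref{lemma:lip}, since $|\Delta^n_\pm u^n|\le M$ uniformly in $n$; it is the measure and time moduli, together with the gradient claim, that require work.

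For the \textbf{measure regularity} I would compare the two equilibria issued from $(t,m)$ and $(t,\tilde m)$, writing $(\tilde u^n,\tilde\mu^n)$ for the second. First, a discrete Lasry--Lions computation — differentiating $\langle u^n_s-\tilde u^n_s,\mu^n_s-\tilde\mu^n_s\rangle$, using summation by parts (on the periodic lattice $\Delta^n_2$ is self-adjoint and $\Delta^n_+$ pairs with $\Delta^n_-$) and comparing costs rather than Hamiltonians, exactly as in Step 2 of the proof of Theorem \ref{thm:conv5} — yields, after discarding the nonnegative terminal and running terms coming from the monotonicity of $g$ and $f$,
\be
c\int_t^T\!\!\int |\Delta^n(u^n-\tilde u^n)|^2\,d(\mu^n_s+\tilde\mu^n_s)\,ds \le \langle u^n(t,\cdot)-\tilde u^n(t,\cdot),\,m-\tilde m\rangle \le 2M\,W_1(m,\tilde m),
\ee
the last inequality again by Lemma \ref{lemma:lip}. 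Since $\partial_p H_{\uparrow},\partial_pH_{\downarrow}$ are locally Lipschitz with bounded arguments, this controls $\int_t^T\!\int|\alpha^n-\tilde\alpha^n|^2\,d(\mu^n_s+\tilde\mu^n_s)\,ds\le C\,W_1(m,\tilde m)$. On the other hand, comparing the two optimal control problems via the $W_1$-Lipschitz continuity of $f,g$ gives $|u^n(t,x)-\tilde u^n(t,x)|\le C\sup_s W_1(\mu^n_s,\tilde\mu^n_s)$ for every $x$. It then remains to bound the flow distance $\sup_s W_1(\mu^n_s,\tilde\mu^n_s)$ by $C\sqrt{W_1(m,\tilde m)}$: I would couple $X^n$ and $\tilde X^n$ through the same Poisson measure in \eqref{dyn:Xn}, so that the diffusive jumps (rate $\sigma/\dn^2$) are perfectly synchronised and the two processes separate only through the drift mismatch $|\alpha^n-\tilde\alpha^n|$; a Gronwall argument together with Cauchy--Schwarz against the energy bound should then give the $\sqrt{W_1}$ modulus for $U^n$.

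For the \textbf{time regularity} I would use the flow (semigroup) identity $U^n(\tilde t,x,m)-U^n(t,x,m)=[u^n(\tilde t,x)-u^n(t,x)]+[U^n(t,x,\mu^n_t)-U^n(t,x,m)]$, where $(\mu^n_s)_{s\ge \tilde t}$ is the equilibrium flow from $(\tilde t,m)$, so that $m=\mu^n_{\tilde t}$. The first bracket is $\le C\sqrt{|t-\tilde t|}$ by the time regularity of the value function (bounded running cost, Lipschitz-in-$x$ bound $M$, and the diffusive spreading $\E|X^n_t-X^n_{\tilde t}|\le C\sqrt{|t-\tilde t|}$ of the chain, uniform in $n$); the second bracket is $\le C\sqrt{W_1(\mu^n_t,\mu^n_{\tilde t})}\le C\,|t-\tilde t|^{1/4}$ by the measure modulus just obtained together with $W_1(\mu^n_t,\mu^n_{\tilde t})\le C\sqrt{|t-\tilde t|}$. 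The weaker exponent $1/4$ dominates, which is the origin of $\sqrt[4]{|t-\tilde t|}$. Finally, the estimate for $\Delta^n_+U^n$ comes from the discrete parabolic estimate, Theorem \ref{thm:discretehe}: viewing the HJB equation as a discrete heat equation $\partial_t u^n=-\sigma\Lambda u^n+(\text{bounded source})$ and recalling that one grid-step increment of $\Delta^n_+u^n$ equals $\tfrac1n\Lambda u^n$, the bound \eqref{eq:hold1} turns into a uniform Hölder estimate of $\Delta^n_+u^n$ in $x$; the measure and time moduli for $\Delta^n_+U^n$ follow by applying the same estimate to the differences $u^n-\tilde u^n$ (and in time), now using the stronger data regularity $f\in\mathcal C^{1+\alpha}$, $g\in\mathcal C^{2+\alpha}$ of Proposition \ref{prop:existmaster} — through \eqref{eq:hold2} — to control sources and terminal conditions by the moduli already established for $U^n$.

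The \textbf{main obstacle} is the flow-distance step: the monotonicity estimate only delivers an $L^2(d(\mu^n+\tilde\mu^n))$ control of $\alpha^n-\tilde\alpha^n$, not a uniform modulus of continuity of the optimal feedback, so propagating it into a genuine $W_1$-bound between the two \emph{nonlinear} equilibrium flows is delicate. This is precisely where the non-degeneracy of the idiosyncratic noise enters: both through the synchronisation of the diffusive jumps in the coupling, which removes diffusive separation and leaves only the drift mismatch, and through the discrete parabolic estimate of Theorem \ref{thm:discretehe}, which supplies the spatial regularity of $\Delta^n_+u^n$ needed to close the Gronwall argument. Matching the Hölder exponents across the threshold $\tfrac12$ separating \eqref{eq:hold1} from \eqref{eq:hold2} is the secondary technical point to watch.
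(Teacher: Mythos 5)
Your proposal follows essentially the same strategy as the paper's proof: spatial Lipschitz continuity from Lemma \ref{lemma:lip}; the monotonicity/cost-comparison argument yielding the $L^2$ bound $C\,W_1(m,\tilde m)$ on the drift mismatch; the coupled Gronwall argument (closed, exactly as you note, by the Lipschitz estimate on $\Delta^n_\pm u^n$ obtained by bootstrapping Theorem \ref{thm:discretehe}) giving the $\sqrt{W_1}$ flow distance; and the dynamic-programming/diffusive-spreading argument producing the exponent $1/4$ in time. The only divergence is minor: for the measure and time moduli of the discrete gradient the paper invokes representation formulae for $\Delta^n_+ U^n$ together with the coupling estimate \eqref{root}, whereas you propose discrete parabolic estimates applied to the difference $u^n-\tilde u^n$; both routes rest on the same ingredients, and the paper's own Step 6 is equally brief.
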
 
	
	\begin{proof}
		\textbf{Step 1.}
		The uniform Lipschitz continuity in space $x$ can be proven exactly as in Lemma \ref{lemma:lip}. 
		%while the $\frac12$--Holder continuity in time is proved below.
		
		\textbf{Step 2.}
		To prove the estimate in $m$, fix the initial time $t$ and consider the two solutions of the associated MFG system \eqref{mfgsystem:n} $(u,\mu)$ and $(\tilde{u},\tilde{\mu})$ with $\mu_t=m$, $\tilde{\mu}_t=\tilde{m}$. (Let us omit $n$ in the notation.) Recall that $U^n(t,x,m)= u(t,x)$ and  $U^n(t,x,\tilde{m})=\tilde{u}(t,x)$. Let $\xi$ and $\tilde{\xi}$ be two random variables (the initial conditions) which attain the minimum in the 1-Wasserstein distance, i. e. $\Law(\xi) =m$, $\Law(\tilde{\xi})= \tilde{m}$ and
		\be 
		\label{ini:W}
		\E | \xi-\tilde{\xi}| = W_1( m, \tilde{m}) .
		\ee 
		Consider the optimal feedback control for $(u,m)$:
		$\alpha(s,x) = ( \alpha^+, \alpha^-) = \big(-\partial_pH_{\uparrow}(x,\Delta^n_+ u(s, x) )$ ,$ \partial_pH_{\downarrow}(x, -(\Delta^n_- u(s, x) )\big)$, and similarly let $\tilde{\alpha}$ be the optimal feedback for $\tilde{u}, \tilde{m}$. 
		Let $X$ be the state process driven by the control $\alpha$,
		with $X_t=\xi$, and $\tilde{X}$ be the process driven by the control $\tilde{\alpha}$ with $\tilde{X}_t = \tilde{\xi}$. 
		%Similarly, let $X^{\tilde{\xi}}$ start at $\tilde{\xi}$ with control $\alpha$ (the same process as before), and $\tilde{X}^\xi$ start at $\tilde{\xi}$ with control $\tilde{\alpha}$.  
		For $\mu$ fixed and a control $\beta$ (open-loop or feedback), denote by $J(t,\xi,\beta,\mu)$ the cost in \eqref{cost:n} starting at $t$, $\xi$, and similarly $J(t,\tilde{\xi}, \beta, \tilde{\mu})$. 
		
		We compute $u$ on $\tilde{X}$: we have
		\begin{align*}
			\E[ &u(t,\tilde{\xi})] = \E[ u(T, \tilde{X}_T) ] + \E\bigg[ \int_t^T
			\left( -\partial_t u  - \sigma\Delta^n_2 u - \tilde{\alpha}_+ \Delta^n_+ u -\tilde{\alpha}_- \Delta^n_- u \right) (s,\tilde{X}_s) ds \bigg]\\
			&= \E \bigg[ g(\tilde{X}_T,\mu_T) +\int_t^T \left(
			-H_{\uparrow}(x, \Delta^n_+ u) -  H_{\downarrow}(x, -\Delta^n_- u) - \tilde{\alpha}_+ \Delta^n_+ u -\tilde{\alpha}_- \Delta^n_- u 
			\right)(s, \tilde{X}_s) +f(\tilde{X}_s, \mu_s) ds\bigg]. 
			%\\
			%&\leq \E \bigg[ g(\tilde{X}_T,\mu_T) +\int_t^T \left(
			%-\frac12 |\tilde{\alpha}_+ - ( \Delta^n_+ u)_-|^2 
			%-\frac12 |\tilde{\alpha}_- - ( \Delta^n_- u)_-|^2 
			%+ \frac12 |\tilde{\alpha}_+|^2 + \frac12 |\tilde{\alpha}_+|^2 
			%\right)(s, \tilde{X}_s) +f(\tilde{X}_s, \mu_s) ds,
		\end{align*}
		A similar computation as in the proof of Theorem \ref{thm:conv5} yields
		\be 
		\label{eq:51}
		\E\bigg[ \int_t^T \frac1C |\alpha - \tilde{\alpha}|^2(s,\tilde{X}_s) ds \bigg]\leq 
		J(t,\tilde{\xi}, \tilde{\alpha}, \mu) - \E[u(t,\tilde{\xi})] .
		\ee 
		Similarly, we get  
		\be 
		\label{eq:52}
		\E \bigg[\int_t^T \frac1C |\alpha - \tilde{\alpha}|^2(s,X_s) ds\bigg] \leq 
		J(t,\xi, \alpha, \tilde{\mu}) - \E[\tilde{u}(t,\xi)] ,
		\ee 
		and we have 
		\begin{align*}
			\E[u(t,\xi)] &=J(t,\xi,\alpha,\mu) \\
			\E[\tilde{u}(t,\tilde{\xi})] &=J(t,\tilde{\xi},\tilde{\alpha},\tilde{\mu}).
		\end{align*}
		Summing \eqref{eq:51} and \eqref{eq:52}, adding and subtracting $\E[u(t,\xi)]$ and $\E[\tilde{u}(t,\tilde{\xi})]$ and then using the monotonicity assumption, we obtain (recalling that 
		$\Law(X_s)=\mu_s$ and $\Law(\tilde{X}_s)= \tilde{\mu}_s$)
		\begin{align*}
			\E \bigg[\int_t^T &\frac1C |\alpha - \tilde{\alpha}|^2(s,X_s) + \frac1C |\alpha - \tilde{\alpha}|^2(s,\tilde{X}_s) ds \bigg] \\
			&\leq \E[ u(t,\xi) - u(t,\tilde{\xi}) +\tilde{u}(t,\tilde{\xi}) - \tilde{u}(t,\xi) ] \\
			&\quad + J(t,\tilde{\xi}, \tilde{\alpha}, \mu) 
			-  J(t,\xi, \alpha, \mu) + J(t,\tilde{\xi}, \tilde{\alpha}, \mu)
			+J(t,\xi,\alpha,\tilde{\mu})-J(t,\tilde{\xi}, \tilde{\alpha}, \tilde{\mu}) \\
			&= \int_{\T} (u-\tilde{u})(0,x) d(m-\tilde{m})(x) 
			+\E  \bigg[\int_t^T \bigg(
			f(\tilde{X}_s,\mu_s) -f(X_s,\mu_s) 
			+f(X_s,\tilde{\mu}_s) - f(\tilde{X}_s,\tilde{\mu}_s)
			\bigg) ds \\
			& \qquad +g(\tilde{X}_T,\mu_T) -g(X_T,\mu_T) 
			+g(X_T,\tilde{\mu}_T) - g(\tilde{X}_T,\tilde{\mu}_T)
			\bigg] \\
			&= \int_{\T} (u-\tilde{u})(0,x) d(m-\tilde{m})(x) \\
			&\quad 
			+
			\int_t^T ds \int_\T 
			\big( f(x,\mu_s) - f(x,\tilde{\mu}_s \big) 
			( \tilde{\mu}_s -\mu_s)(dx)  
			+ \int_{\T} \big(g(x,\mu_T) - g(x,\tilde{\mu}_T \big) 
			( \tilde{\mu}_T -\mu_T)(dx) \\
			&\leq \int_{\T} (u-\tilde{u})(0,x) d(m-\tilde{m})(x)
		\end{align*}
		We now bound the r.h.s using the Lipschitz continuity of $u$ and $\tilde{u}$ to obtain :
		\be
		\label{est:square}
		\E \int_t^T  |\alpha - \tilde{\alpha}|^2(s,X_s) +  |\alpha - \tilde{\alpha}|^2(s,\tilde{X}_s) ds \leq C W_1(m,\tilde{m})%\E | \xi-\tilde{\xi}|
		\ee 
		
		%\red{To obtain a Lipschitz estimate, we should bound the r.h.s. as}
		%\[
		%\red{\int_{\T} (u-\tilde{u})(0,x) d(m-\tilde{m})(x) \leq 
			%||Du- D \tilde{u} ||_{\infty} W_1(m, \tilde{m})}
		%\]
		%\red{and then bound the discrete gradient $D(u-\tilde{u})$.}
		
		\textbf{Step 3.}
		We now use a Lipschitz property on the discrete gradient of $u$, or $\tilde{u}$ (see Step 6 below):
		\be 
		\label{Lip:delta}
		|\Delta^n_{\pm} u (x) - \Delta^n_{\pm} u(\tilde{x}) |\leq C |x-\tilde{x}|.
		\ee 
		If this is true, then applying \eqref{est:square} and Jensen's inequality, we  obtain 
		\begin{align*}
			\E  [|  X_s-\tilde{X}_s | ]
			&\leq 
			\E[|\xi - \tilde{\xi}|  ]+    
			\E  \bigg[\int_t^s |\alpha_+(r, X_r) - \tilde{\alpha}_+(r,\tilde{X}_r)| + 
			|\alpha_-(r, X_r) - \tilde{\alpha}_-(r,\tilde{X}_r)|dr\bigg] \\
			& \leq \E[|\xi - \tilde{\xi}|] + C \E  \bigg[\int_t^s | X_r -\tilde{X}_r| dr \bigg]
			+ C \sqrt{\E \bigg[\int_t^T |\alpha - \tilde{\alpha}|^2 (r, X_r) ds\bigg] } \\
			&\leq C(\E[|\xi - \tilde{\xi}|]+ \sqrt{ W_1(m,\tilde{m} )} + C  \int_t^s \E[| X_r -\tilde{X}_r|]dr 
		\end{align*}
		and thus Gronwall's lemma yields 
		\be 
		\label{root}
		\sup_{t\leq s\leq T} \E [| \tilde{X}_s -  X_s| ]\leq
		C \sqrt{W_1(m,\tilde{m})}.
		\ee
		
		\textbf{Step 4}. We bound the value functions, classically using the characteristics, by 
		\begin{align*} 
			|U^n(t,x,m) - U^n(t,x,\tilde{m}) |&= |u(t,x) -\tilde{u}(t,x)| 
			\leq C  \sup_{t\leq s\leq T} W_1 (\mu_s, \tilde{\mu_s}) \\
			&\leq C \sup_{t\leq s\leq T} \E | \tilde{X}_s -  X_s| 
			\leq C( \E[| \xi - \tilde{\xi}|] )^{\frac{1}{2}}=
			C \sqrt{W_1(m,\tilde{m} )} .
		\end{align*}

		\textbf{Step 5}.
		To prove the estimate in time, let $\tilde{t} >t$ and consider the HJB equation  \eqref{HJB:n} starting at $t$. 
		We recall that $(u,\mu)$ is a MFG solution and thus, for $\mu$ fixed, $u$  represents the value function corresponding to the cost \eqref{cost:n} and we have 
		\[
		u(t,x) = U^n(t,x, m), \qquad u(\tilde{t}, x) = U^n(\tilde{t}, x, \mu^n_{\tilde{t}}),
		\]
		where $\mu_s$ is the Law of the process starting at $(t,m)$ with control given by $\Delta^n u$. 
		Hence the dynamic programmin principle gives 
		\[
		u(t,x) = \E \left[ \int_{t}^{\tilde{t}}  L(X^x_s, 
		\Delta_+^n u (s,X^x_s)) + L(X^x_s,-\Delta_-^n u (s,X^x_s))
		+ f(X^x_s,\mu_s) ds +u(\tilde{t}, X^x_{\tilde{t}}) \right],
		\] 
		where $X^x$ is now the same process as before but conditioned with $X^x_t=x$. Since $u$ is uniformly Lipschitz in space, $\Delta_+^n u$ and $\Delta_-^n u$ are uniformly bounded and we have 
		\begin{align*}
			|u(t,x) - u(\tilde{t}, x)| 
			&\leq |u(t,x) - \E[ u(\tilde{t}, X^x_{\tilde{t}}) ]|
			+ | \E [u(\tilde{t}, X^x_{\tilde{t}})] - u(\tilde{t}, x) | \\ 
			&\leq C(\tilde{t} -t) + C \E [| X^x_{\tilde{t}} - x|] . 
		\end{align*}
		We bound the latter term by using the SDE representation \eqref{dyn:Xn}: 
		\begin{align*} 
			\E [| X^x_{\tilde{t}} - x|^2] &\leq C \E\bigg[ \left| \int_t^{\tilde{t}}  \lambda( \Delta^n_{\pm} u(s, X^x_s), \theta) \nu(d\theta) ds \right|^2 \bigg]
			+C \E \bigg[\left|  \int_t^{\tilde{t}}  \lambda( \Delta^n_{\pm} u(s, X^x_s), \theta) (\mathcal{N}(d\theta, ds - \nu(d\theta) ds) \right|^2\bigg] \\
			&\leq C \E \bigg[\left| \int_t^{\tilde{t}} \dn 
			\frac{ 
				\partial_pH_{\uparrow}(X^x_s,\Delta^n_{+} u(s, X^x_s) ) + \partial_pH_{\downarrow}(X^x_s,-\Delta^n_{-} u(s, X^x_s) ) }{\dn} ds \right|^2 \bigg]
			\\
			& \quad +C \E  \bigg[ \int_t^{\tilde{t}} \left| \lambda( \Delta^n_{\pm} u(s, X^x_s), \theta) \right|^2  \nu(d\theta) ds \bigg]\\
			&\leq C (\tilde{t} -t)^2 
			+C \E  \bigg[ \int_t^{\tilde{t}} \dn^2 \left( \frac{\sigma}{\dn^2} + (\Delta^n_{+} u(s, X^x_s) )_- + \frac{\sigma}{\dn^2} + (\Delta^n_{-} u(s, X^x_s) )_-  \right) ds \bigg]\\
			&\leq C(\tilde{t} -t)^2 +C (\tilde{t} -t)
		\end{align*} 
		and therefore $\E[ | X^x_{\tilde{t}} - x|] \leq C \sqrt{\tilde{t} -t}$, which yields
		\[
		|u(t,x) - u(\tilde{t}, x)| \leq C \sqrt{\tilde{t} -t} .
		\] 
		Similarly we get 
		\[
		W_1 (\mu_{\tilde{t}}, m) \leq C \sqrt{\tilde{t}-t}
		\]
		and hence, applying the H\"older continuity in $m$,
		\begin{align*}
			|U^n(\tilde{t},x,m) - U^n(t,x,m)| 
			&\leq | U^n (\tilde{t}, x, \mu_{\tilde{t}}) -U^n (\tilde{t}, x, m) | 
			+ | u(\tilde{t},x) - u(t,x) | \\
			&\leq C \sqrt{W_1 (\mu_{\tilde{t}}, m)}  + C \sqrt{\tilde{t}-t} \\
			& \leq C(\tilde{t}-t)^{\frac{1}{4}}.
		\end{align*}

		\textbf{Step 6.}
		It remains to prove \eqref{Lip:delta}. This estimate follows from successive uses of Theorem \ref{thm:discretehe} on the discrete HJB equation in the characteristics. Indeed, as we already established the uniform Lipschitz estimate in Lemma \ref{lemma:lip}, passing the Hamiltonian to the right hand side, we deduce from the first part of Theorem \ref{thm:discretehe} a $\alpha$-H\"older type estimate on the spatial gradient of $u$, for $\alpha \in (0,\frac{1}{2})$. Using this new information, we use once again this argument to obtain a higher order regularity on $u$, namely a $\alpha$-H\"older estimate on its spatial gradient for $\alpha \in (\frac 12,1)$. We then use it once more to finally obtain the required boundedness of the discrete Laplacian (uniformly in $n$ of course).

		\textbf{Step 7.}
		The fact that the discrete gradient satisfies the same estimate simply follows from remarking that the two previous steps can be made for the discrete gradient exactly in the same way, and applying  estimate \eqref{root} which we have obtained. Indeed, the equation satisfied by the discrete gradient $v^n = \Delta_+^nu^n$ is given by
		\[
		\begin{aligned}
			-\frac{d}{dt} v^n - \sigma \Delta_2^n v^n &= (\Delta x_n)^{-1}(f(\mu_t^n)(x+ \Delta x_n) - f(\mu_t^n)(x))\\
			&- (\Delta x_n)^{-1}(H_\uparrow(x+ \Delta x_n,v^n(x+\Delta x_n)) - H_\uparrow(x,v^n(x)))\\
			&- (\Delta x_n)^{-1}(H_\downarrow(x+ \Delta x_n,v^n(x)) - H_\downarrow(x,v^n(x-\Delta x_n)))
		\end{aligned}
		\]
		Taking two different solutions $v^n$ and $\tilde{v}^n$ of this equations for two different $\mu$ and $\tilde{\mu}$, we now want to show that an estimate similar to Step 4 can be obtained. This is achieved thanks to a Gr\"onwall estimate on the supremum of $v^n - \tilde{v}^n$. Remark first that from the regularity assumptions we made on $f$, the term involving $f$ will not raise any concern and will yield the require bound with $\sqrt{W_1(m,\tilde{m})}$ thanks to \eqref{root}. Now, concerning the other terms, because the Hamiltonian is sufficiently smooth, we have that $H_\uparrow$ and $H_\downarrow$ are $C^{1,1}$. Hence, we can use the standard maximum principle like estimate (its discrete version to be more precise) to obtain the required estimate. Let us insist on the fact that the estimate \eqref{Lip:delta} is crucial to apply Gr\"onwall's Lemma. The time estimate can then be obtained in a similar fashion as we did in the previous case.
	\end{proof}
	
	\begin{rem}
		The previous result is the only part of this paper in which the dimension $1$ plays a particular role. Indeed, even if it is extremely likely that the estimate proved in Theorem \ref{thm:discretehe} can be generalized to other dimension, it is not proved here. The recent preprint \cite{schauderdiscrete} has been brought to our attention and seems to be a possible answer to this question, and thus could allow to extend this study to higher dimensions.
	\end{rem}
	
	%\begin{thm}
	%If $f$ and $g$ are Lipschitz on $\T \times \mathcal{P}(\T)$ and monotone \eqref{mono}, then there exists a constant $C$ independent of $n$ such that
	%\be 
	%\label{estimate:n} 
	%|U^n(t,x,m) - U^n(\tilde{t}, \tilde{x}, \tilde{m})| \leq C\left(
	%\sqrt{|t-\tilde{t}|} +|x-\tilde{x}| + \sqrt{W_1(m, \tilde{m})} \right)
	%\ee  
	%for any $t, \tilde{t} \in [0,T]$, $x,\tilde{x} \in S^n$, $m = \sum_{j=1}^n m_j \delta_{x_j}, \tilde{m}=\sum_{j=1}^n \tilde{m}_j \delta_{x_j} \in \mathcal{P}(S^n)$. 
	%\end{thm} 
	
	%\begin{rem}
	%To obtain a Lipschitz estimate in $m$ uniform in $n$ we should prove that the discrete gradient $\Delta^n_{\pm} u$ is Lipschitz in space, uniformly in $n$. Indeed, this Lipschitz property of the gradient is used to prove the Lipschitz estimate in $m$ in the limit continuous MFG and holds either in the non-degenerate case or in the convex case. %Even in those cases, it seems hard to prove such property in the discrete setting, uniformly in $n$. 
	%Nevertheless, \eqref{estimate:n} (togetehr with \eqref{bound:U}) is enough to apply Ascoli-Arzel\`a theorem and holds also in the degenerate case without assuming convexity in $x$.  We derive that there exists $U: [0,T]\times \T \times \mathcal{P}(\T)$ and a subsequence $(U_{n_k})_k$ such that 
	%\be 
	%\lim_k \sup_{t \in [0,T], x \in S^{n_k}, m \in \mathcal{P}(S^{n_k})} 
	%|U^{n_k}(t,x,m) - U(t,x,m)| =0
	%\ee 
	%and thus $\lim_k U^{n_k}(m^k) = U(m)$ if $\lim_k m^k=m$ and $U$ is continuous.
	%\end{rem} 
	\subsection{Compactness results for master equations with common noise}
	\label{sec:4.4}
	In this section, we explain how the previous estimate can be used to gain compactness on the sequence $(U^n)_{n \geq 0}$ of solutions of \eqref{master:nc}. This is mainly achieved by passing the terms modelling common noise to the right side of the equation and by treating them as source terms, thanks to a standard estimate, provided for instance in \cite{bertucci2019some,bertucci2021monotone} in the discrete setting or in \cite{bertucci2021monotone2} in the continuous one. 
	
	\begin{prop}\label{prop:compact}
		Under the assumptions of Proposition \ref{prop:existmaster}, there exists a continuous function $V : [0,T]\times \mathbb{T} \times \mathcal{P}(\mathbb{T})\to \mathbb{R}$ such that, extracting a subsequence if necessary
		\be\label{est:1650}
		\lim_{n \to \infty} \sup  \{|U^n(t,x,m) - V(t,x,m)|, (t,x,m) \in [0,T]\times S^n \times \mathcal{P}(S^n)\} = 0.
		\ee
		Moreover, $\partial_x V$ is continuous on $[0,T]\times \mathbb{T} \times \mathcal{P}(\mathbb{T})$ and 
		\be\label{est:1654}
		\lim_{n \to \infty} \sup  \{|\Delta_{\pm}^nU^n(t,x,m) \mp \partial_xV(t,x,m)|, (t,x,m) \in [0,T]\times S^n \times \mathcal{P}(S^n)\} = 0.
		\ee
	\end{prop}
	\begin{proof}
		We start by establishing why the results of Theorem \ref{thm:estimate} remain true in the case $\lambda > 0$. This relies mainly on an a priori estimate on the solutions of \eqref{master:nc}. This estimate is somehow standard in the literature however, in our context, its introduction is quite tedious. Consider a classical solution $U^n$ of \eqref{master:nc}. We would like to use the computations of the proof of Proposition 1.3 in \cite{bertucci2021monotone}. However the proof is stated for master equations with possibly non-constant mass. This allows in particular to consider without any problem derivatives of $U^n$ with respect to each component of $m \in \mathcal{P}(S^n)$. We do not focus too much on this point, as this does not raise any difficulty here. We recall Appendix B of \cite{bertucci2021monotone} to explain how we can pass from one setting to the other (i.e. from $\mathcal{P}(S^n)$ to $\mathcal{M}(S^n)$). The main idea of the proof of Proposition 1.3 in \cite{bertucci2021monotone} is to show that quantities of the form of \eqref{eq:strongmonotonicity} are propagated for $U^n$ through the master equation. Following the computations of this result we arrived at the following: there exists $C > 0$, such that for all $n >0, (t,m) \in [0,T]\times S^n \times \mathcal{P}(S^n), \xi \in \mathbb{R}^n, \sum_i \xi_i = 0$,
		\[
		\sum_{i = 1}^n\left(\sum_{j=1}^n\partial_{m_i}U^n(t,j\Delta x,m)\xi_j\right)^2 \leq \sum_{i,j = 1}^n\partial_{m_i}U^n(t,j\Delta x,m)\xi_i \xi_j.
		\]
		Hence, we deduce from the Cauchy-Schwarz inequality that
		\[
		\left(\sum_{i = 1}^n\left(\sum_{j=1}^n\partial_{m_i}U^n(t,j\Delta x,m)\xi_j\right)^2\right)^{\frac 12} \leq C \left(\sum_{i =1}^n \xi_i^2\right)^{\frac{1}{2}}
		\]
		This yields a, uniform in $n$, Lipschitz estimate on $U^n(t)$ seen as an operator from $\mathcal{P}(S^n)$ to $\mathbb{R}^n$ when $\mathbb{R}^n$ is equipped with the $\ell_2$ norm and $\mathcal{P}(S^n)$ is equipped with the distance\footnote{This distance can be interpreted as an $L^2(\T)$ distance.} $\tilde{d}(m,m') = \sqrt{n\sum_i (m_i - m_i')^2}$.
		From the properties of the operators $\mathcal{A}$ and $(A_n)_{n> 0}$, we deduce that $m \to \lambda A_n^*U^n(t,\cdot,A_nm)$ is uniformly Lipschitz continuous from $\mathcal{P}(S^n)$ to $\mathbb{R}^n$ when $\mathcal{P}(S^n)$ is equipped with the Wasserstein distance and $\mathbb{R}^n$ with the $\ell_{\infty}$ norm. This is a simple consequence of the regularizing properties of the operators $\mathcal{A}$ and $(A_n)_{n> 0}$.
		
		{It then remains to consider \eqref{master:nc} as of the form of \eqref{master:n}, with the new source term being $f + \lambda A_n^*U^n(A_nm)$.}  We then deduce using Theorem \ref{thm:estimate} that its conclusion is still satisfied here.
		The rest of the proof is now classical. Let us define $\bar{U}^n$ by
		\be
		\begin{aligned}
			&\forall (t,x,m) \in [0,T]\times \mathbb{T} \times \mathcal{P}(\mathbb{T}),\\
			& \bar{U}^n(t,x,m) = \inf \left\{U^n(t,y,\mu) + C |x-y| + C\sqrt{W_1(m,\mu)}\bigg| (y,\mu) \in S^n\times \mathcal{P}(S^n)\right\},
		\end{aligned}
		\ee
		where $C$ is a constant given by the use of Theorem \ref{thm:estimate}. The sequence $(\bar{U}^n)_{n >0}$ satisfies the assumptions of Ascoli-Arzelà Theorem which concludes the proof of \eqref{est:1650}.
		To obtain the additional regularity of $V$, let us remark that, from Theorem \ref{thm:estimate}, we know that the previous argument can be adapted to $(\Delta_{\pm}^nU^n)_{n > 0}$. Hence we also obtained \eqref{est:1654}. %Remark that the uniform $C^{2,\alpha}$ estimate can be proved last, by using a representation through characteristics and Theorem \ref{thm:discretehe}.
	\end{proof}

	\subsection{Convergence of the discretized problem} 
	\label{sec:4.5}

	{We now state in which sense any function $V$ given by Proposition \ref{prop:compact} is indeed the unique monotone solution of \eqref{masterc}. 
		The stability of monotone solutions strongly hints that this convergence should happen. The main difficulty relies in the fact that the convergence is here somehow unusual. Moreover, because the uniform convergence of the discrete Laplacian of $(U^n)_{n> 0}$ has not be stated, we need to deal with such terms with care. }
	
	{Let us first remark that in the formulation of monotone solutions of \eqref{masterc}, one only needs the Laplacian of $V$ to make sense against the test measure $\nu$. Indeed the term $\langle \Delta U,m_*\rangle$ appears on the two sides of the inequality. Hence, if $V$ is not sufficiently regular in $x$, one can still test its Laplacian against a measure with a regular density and be able to obtain information in the limit. Hopefully, this will be sufficient to establish that $V$ is indeed a monotone solution. This remark leads us to state the following result which state that $V$ is an approximate monotone solution of the master equation in a certain sense.}
	\begin{lem}\label{lem:conv1}
		Let $V$ be any function given by the Proposition \ref{prop:compact}. For any measure $\nu \in \mathcal{M}(\mathbb{T})\cap W^{2,\infty}(\mathbb{T})$, $C^2$ function $\phi$ of the space variable and $C^1$ function $\psi$ of the time variable, for any $(t_*,m_*) \in [0,T)\times \mathcal{P}(\mathbb{T})$ point of strict minimum of $(t,m) \to \langle V(t,m) - \phi, m - \nu\rangle - \psi(t)$ the following holds
		\be
		\begin{aligned}
			&-\frac{d \psi(t_*)}{dt} + \langle H(x,\nabla_xV) + \lambda(V - \mathcal{A}^*V(\mathcal{A}m_*)), m_* - \nu\rangle \geq \langle f(m_*), m_* - \nu\rangle\\
			& - \langle V-  \phi, \partial_x(\partial_pH(\cdot,\partial_xV) m_*)\rangle + \sigma\langle \partial_{xx} \phi,m_*\rangle - \sigma\langle \partial_{xx} V(t_*,m_*),\nu\rangle.
		\end{aligned}
		\ee
	\end{lem}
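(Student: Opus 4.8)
The plan is to derive the inequality by passing to the limit in the discrete monotone solution property of $U^n$ (Definition \ref{def:monotonef}), exploiting the uniform convergences $U^n \to V$ and $\Delta_{\pm}^n U^n \to \pm\partial_x V$ provided by Proposition \ref{prop:compact}. First I would attach to the test triple $(\phi,\psi,\nu)$ the discrete data $a = (\phi(x_i))_{1\le i\le n} \in \R^n$ and a projection $\nu^n \in \mathcal{M}(S^n)$ of $\nu$ onto the grid (e.g. $\nu^n = \sum_i \tfrac1n\rho(x_i)\delta_{x_i}$ if $\rho$ denotes the $W^{2,\infty}$ density of $\nu$), chosen so that $\nu^n \to \nu$ weakly and the discrete second differences $\Delta_2^n \nu^n$ stay uniformly bounded and converge to $\partial_{xx}\nu$ when integrated against continuous functions. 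Consider the functional $\Phi^n(t,m) = \langle U^n(t,\cdot,m) - a, m - \nu^n\rangle - \psi(t)$ on $[0,T]\times\mathcal{P}(S^n)$. Since $(t_*,m_*)$ is a \emph{strict} minimum of the limiting functional and $\Phi^n$ converges to it uniformly on the ($W_1$-dense) sets $\mathcal{P}(S^n)$, a standard stability argument produces minimizers $(t_n,m_n)$ of $\Phi^n$ with $(t_n,m_n)\to(t_*,m_*)$, so in particular $t_n < T$ for $n$ large; if necessary I would add an infinitesimal time penalization, absorbed into $\psi$, to make the discrete minimum strict so that Definition \ref{def:monotonef} applies.

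Applying Definition \ref{def:monotonef} at $(t_n,m_n)$ with these data yields
\be
-\frac{d\psi(t_n)}{dt} + \lambda\langle U^n - A_n^*U^n(t_n,\cdot,A_nm_n), m_n - \nu^n\rangle \geq \langle G^n(m_n,U^n), m_n - \nu^n\rangle + \langle F^n(m_n,U^n), U^n - a\rangle,
\ee
with $U^n = U^n(t_n,\cdot,m_n)$. The heart of the proof is to pass to the limit in the two terms on the right, and the crucial structural fact — flagged just before the statement — is that the discrete Laplacian of $U^n$ tested against the irregular measure $m_n$ cancels exactly. Indeed, the $\sigma\Delta_2^n U^n$ part of $G^n$ contributes $\sigma\langle\Delta_2^n U^n, m_n\rangle - \sigma\langle\Delta_2^n U^n,\nu^n\rangle$, while a direct computation shows that the $\sigma$-part of $F^n(m_n,U^n)$ has $i$-th component $-\sigma\Delta_2^n m_n(x_i)$; discrete summation by parts (no boundary terms on $\T$) turns its pairing against $U^n - a$ into $-\sigma\langle\Delta_2^n U^n, m_n\rangle + \sigma\langle\Delta_2^n\phi, m_n\rangle$. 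Summing, the terms $\pm\sigma\langle\Delta_2^n U^n, m_n\rangle$ cancel.

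What survives from the second order is $-\sigma\langle\Delta_2^n U^n,\nu^n\rangle + \sigma\langle\Delta_2^n\phi, m_n\rangle$. I cannot pass to the limit in $\langle\Delta_2^n U^n,\nu^n\rangle$ directly, since $\Delta_2^n U^n$ need not converge uniformly; instead I move $\Delta_2^n$ onto $\nu^n$ by summation by parts, obtaining $-\sigma\langle U^n, \Delta_2^n\nu^n\rangle \to -\sigma\langle V, \partial_{xx}\nu\rangle = -\sigma\langle\partial_{xx}V(t_*,m_*),\nu\rangle$, the last equality by integration by parts since $V$ is $\mathcal{C}^{2+\alpha}$ in $x$ by Proposition \ref{prop:compact}. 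This is precisely where the hypothesis $\nu\in W^{2,\infty}(\T)$ enters: it bounds $\Delta_2^n\nu^n$ and yields its convergence against the uniformly convergent $U^n$. The term $\sigma\langle\Delta_2^n\phi, m_n\rangle \to \sigma\langle\partial_{xx}\phi, m_*\rangle$ since $\phi\in\mathcal{C}^2$ and $m_n\to m_*$ weakly. These produce exactly the two Laplacian terms on the right-hand side of the claimed inequality.

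The remaining terms are routine. The first-order part of $G^n$ converges, using $\Delta_{\pm}^n U^n \to \pm\partial_x V$ uniformly and $H_\uparrow + H_\downarrow = H$, to $\langle f(m_*) - H(\cdot,\partial_x V), m_* - \nu\rangle$; the drift part of $F^n$, a discrete divergence of $\partial_pH_{\uparrow/\downarrow}(\cdot,\Delta_{\pm}^n U^n)\,m_n$, is paired with $U^n - a$ and, after summation by parts and using $\partial_pH_\uparrow + \partial_pH_\downarrow = \partial_pH$, converges to $\int_\T \partial_pH(x,\partial_x V)\,\partial_x(V-\phi)\,m_*(dx) = -\langle V - \phi, \partial_x(\partial_pH(\cdot,\partial_x V)m_*)\rangle$; in both cases the integrands converge uniformly and are integrated against $m_n \to m_*$. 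The common-noise term passes to the limit from $A_n\to\mathcal{A}$, $A_n^*\to\mathcal{A}^*$ and $U^n\to V$ uniformly, and $\psi'(t_n)\to\psi'(t_*)$. Since $\geq$ survives under limits, collecting everything gives the stated inequality. The main obstacle is the second-order analysis: $\sigma\langle\Delta_2^n U^n, m_n\rangle$ is genuinely uncontrolled on its own (no regularity of $m_n$, no uniform convergence of $\Delta_2^n U^n$), and the whole argument rests on the exact algebraic cancellation between the Laplacian in $G^n$ and the $\sigma$-part of $F^n$, which leaves only contributions tested against the smooth objects $\nu^n$ and $\phi$. A secondary, standard point is the convergence of the discrete minimizers and the strictness of the discrete minimum.
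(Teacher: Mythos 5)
Your proposal follows essentially the same route as the paper: discretize the test data $(\phi,\nu)$, locate discrete minimum points of $(t,m)\mapsto\langle U^n(t,\cdot,m)-a,\,m-\nu^n\rangle-\psi(t)$ converging to $(t_*,m_*)$, apply Definition \ref{def:monotonef} there, and pass to the limit using the uniform convergences of Proposition \ref{prop:compact}. In fact you supply the substance that the paper compresses into the single line ``passing to the limit yields the required result'': the exact cancellation of $\pm\sigma\langle\Delta_2^n U^n,m_n\rangle$ between $G^n$ and the $\sigma$-part of $F^n$, the transfer of $\Delta_2^n$ onto the smooth $\nu^n$ by discrete summation by parts (this is exactly where $\nu\in W^{2,\infty}$ is used), and the identification of the surviving terms $\sigma\langle\partial_{xx}\phi,m_*\rangle-\sigma\langle\partial_{xx}V(t_*,m_*),\nu\rangle$ --- which is precisely the mechanism flagged in the paper's remark preceding the lemma.

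One step, as written, would fail: your device for making the discrete minimum strict. A penalization in time alone (``absorbed into $\psi$'') only breaks ties in $t$; if $\Phi^n(t_n,\cdot)$ attains its minimum at two distinct measures, Definition \ref{def:monotonef} still cannot be invoked. The perturbation must act on $m$ as well. This is what the paper gets from Stegall's variational principle \cite{stegall,bishop}: there exist arbitrarily small $\delta_n\in\R$ and $a_n\in\R^n$ such that the functional perturbed by $\delta_n t+\langle a_n,m\rangle$ has a \emph{strict} minimum $(t_n,m_n)$ on the compact set $[0,T]\times\mathcal{P}(S^n)$. The perturbation is compatible with the structure of the definition, since $\langle a_n,m\rangle$ is absorbed into the test vector (replace $a$ by $a-a_n$, up to the irrelevant constant $\langle a_n,\nu^n\rangle$) and $\delta_n t$ into $\psi$, and both contributions ($-\delta_n$ on the left, $-a_n$ inside $U^n-a$ on the right) vanish as $n\to\infty$. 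With this correction, the rest of your argument goes through and matches the paper's proof.
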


	\begin{proof}
		Consider $ \nu,\phi,\psi,t_*,m_*$ as in the statement. For any $n > 0$, consider $\nu^n$ and $\phi^n$ suitable discretizations of $\nu$ and $\phi$. Thanks to Stegall's Lemma \cite{stegall,bishop}, for any $n > 0$, there exists $\delta_n \in \mathbb{R}, a_n \in \R^n$ as small as we want, such that $(t,m) \to \langle U^n(t,m) - \phi^n, m - \nu^n\rangle - \psi(t) + \delta^n t + \langle a_n,m\rangle$ has a strict minimum at $(t_n,m_n)$ on $[0,T] \times \mathcal{P}(S^n)$. Because $U^n$ is a monotone solution of \eqref{master:nc} we obtain that
		\be
		\begin{aligned}
			-\frac{d \psi(t_n)}{dt} - \delta_n + \lambda\langle U^n(m_n) - A_n^*U(A_n m_n), m_n - \nu\rangle &\geq \langle G^n(m_n,U(t_n,m_n)), m_n - \nu^n\rangle \\
			&+ \langle F^n(m_*,U(t_*,m_*)),U(t_*,m_*)- \phi^n - a_n\rangle.
		\end{aligned}
		\ee
		Passing to the limit $n \to \infty$ in the previous inequality yields the required result.
		
	\end{proof}

	On the other hand, defining $\mathcal{P}_M = \{ m \in \mathcal{P}(\mathbb{T}), \|m\|_{2,\infty} \leq M\}$, the unique monotone solution $U$ of \eqref{masterc} satisfies
	\begin{lem}\label{lem:conv2}
		Fix $C > 0$. There exists a function $\omega : \R_+ \to \R_+$ such that $\omega(M) \to 0$ when $M \to \infty$ and for any measure $\nu \in \mathcal{M}(\mathbb{T})$, $\mathcal{C}^{1+\alpha}$ function $\phi$ of the space variable and $\mathcal{C}^1$ function $\psi$ of the time variable, both bounded by $C$, for any $(t_*,m_*)\in [0,T)\times \mathcal{P}_M$ point of strict minimum of $(t,m) \to \langle U(t,m) - \phi, m - \nu\rangle - \psi(t)$ on $[0,T)\times \mathcal{P}_M$, the following holds
		\be\label{eq:1723}
		\begin{aligned}
			&-\frac{d \psi(t_*)}{dt} + \langle H(x,\partial_xU) + \lambda(U - \mathcal{A}^*U(\mathcal{A}m_*)), m_* - \nu\rangle \geq \langle f(m_*), m_* - \nu\rangle\\
			& - \langle U-  \phi, \partial_x(\partial_pH(x,\partial_xU) m_*)\rangle + \sigma\langle \partial_{xx} \phi,m_*\rangle - \sigma\langle \partial_{xx} U(t_*,m_*),\nu\rangle - \omega(M),
		\end{aligned}
		\ee
		where the term $\langle \partial_{xx} \phi,m_*\rangle$ is understood in a weak sense.
	\end{lem}
	In other words, $U$ is almost a solution of \eqref{masterc} on $\mathcal{P}_M$, uniformly in $M$.
	\begin{proof}
		Assume that it is not the case. We first prove the statement for $\mathcal{C}^{2+\alpha}$ functions $\phi$. Reasoning by contradiction, there exists $\kappa > 0$ and a sequence $(\phi_M,\psi_M, t_M,m_M,\nu_M)$ such that $\|\psi_M\|_1 + \|\phi_M \|_{2+\alpha} \leq 2C$, $(t_M,m_M)$ point of strict minimum of $(t,m) \to \langle U(t,m) - \phi_M,m- \nu_M\rangle - \psi_M(t)$ on $[0,T)\times \mathcal{P}_M$ such that
		\be\label{contradict1}
		\begin{aligned}
			&-\frac{d \psi_M(t_*)}{dt} + \langle H(x,\partial_xU) + \lambda(U - \mathcal{A}^*U(\mathcal{A}m_M)), m_M - \nu_M\rangle \leq \langle f(m_M), m_M - \nu_M\rangle\\
			& - \langle U-  \phi, \partial_x(\partial_pH(x,\partial_xU) m_M)\rangle + \sigma\langle \partial_{xx} \phi_M,m_M\rangle - \sigma\langle \partial_{xx} U(t_M,m_M),\nu_M\rangle - \kappa.
		\end{aligned}
		\ee
		Extract a subsequence if necessary and consider the limit point $(\phi_*,\psi_*,\nu_*)$ of the sequence $(\phi_M,\psi_M,\nu_M)_{M \geq 0}$. Using once again Stegall's Lemma, for any $\epsilon > 0$, there exists $\delta \in (-\epsilon,\epsilon)$, $\tilde{\phi}$ such that $\|\tilde{\phi}\|_{2 + \alpha} \leq \epsilon$ and $(t,m) \to \langle U(t,m) - (\phi_* + \tilde{\phi}),m - \nu_*\rangle - (\psi_*(t) + \delta t)$ has a strict minimum on $[0,t_0]\times \mathcal{P}(\mathbb{T})$ at $(t^*,m^*)$. Because $U$ is a monotone solution of \eqref{masterc}
		\be\label{contradict2}
		\begin{aligned}
			&-\frac{d \psi_*(t^*)}{dt} + \delta + \langle H(x,\partial_xU) + \lambda(U - \mathcal{A}^*U(\mathcal{A}m^*)), m^* - \nu_*\rangle \geq \langle f(m^*), m^* - \nu_*\rangle\\
			& - \langle U-  \phi_* - \tilde{\phi}, \partial_x(\partial_pH(x,\partial_xU) m^*)\rangle + \sigma\langle \partial_{xx} (\phi_* + \tilde{\phi}),m^*\rangle - \sigma\langle \partial_{xx}U(t^*,m^*),\nu_*\rangle.
		\end{aligned}
		\ee
		Consider now that $M$ and $\epsilon$ are fixed. Take $\epsilon' > 0, \bar{\phi}$ and $\bar{\delta}$ smaller than $\epsilon$ and consider now a strict minimum $(\bar{t},\bar{m})$ of $(t,m) \to \langle U(t,m) - (\phi_M + \tilde{\phi} + \bar{\phi}),m - \nu_M\rangle -(\psi_M(t) + (\delta + \bar{\delta})t)$. Given that $M$ is large enough, if $\epsilon$ and $\epsilon'$ are sufficiently small, then $(\bar{t},\bar{m})$ is sufficiently close to $(t_M,m_M)$. The uniformity in $M$ large enough comes from the uniform continuity of $U$ and its derivatives and from the convergence of the sequence $(\phi_M,\psi_M,\nu_M)_{M \geq 0}$. On the other hand, from the same argument, given that $M$ is large enough, $(\bar{t},\bar{m})$ is sufficiently close to $(t^*,m^*)$. Using once again the uniform continuity of $U$ and its derivatives, and the convergence of $(\phi_M,\psi_M,\nu_M)_{M \geq 0}$, we obtain a contradiction by comparing \eqref{contradict1} and \eqref{contradict2}.
		
		We then obtain the result for functions $\phi$ in $\mathcal{C}^{1+\alpha}$ in the following way. Assume that the result does not hold for the modulus $\omega$ we just constructed, for a certain function $\phi \in \mathcal{C}^{1+\alpha}\setminus \mathcal{C}^{2+\alpha}$. Consider a sequence $(\phi_n)_{n > 0}$ of smooth functions converging toward $\phi$ in $\mathcal{C}^{1+\alpha}$. Then, thanks to what we just proved, and Stegall's Lemma, we deduce that \eqref{eq:1723} holds at some point $m_n \in \mathcal{P}_M$. Since the minimum of $\Psi:(t,m) \to \langle U(t,m) - \phi, m - \nu\rangle - \psi(t)$ is strict, we deduce that $(m_n)_{n > 0}$ converges toward $m_*$, the second argument at the strict of $\Psi$. By passing to the limit in the monotone solution relation, we then obtain the required result, since $(m_n)_{n>0}$ converges toward $m_*$ in $\mathcal{C}^{1}$ because for all $n> 0, m_n \in \mathcal{P}_M$.
	\end{proof}
	
	The following result hence concludes the proof of Theorem \ref{thm:main:mono}. 
	
	\begin{thm} 
		\label{thm:conv:18}
		A function $V$ given by Proposition \ref{prop:compact} is such that $\nabla_x V= \nabla_x U$ for $U$ a monotone solution of \eqref{masterc}.
	\end{thm}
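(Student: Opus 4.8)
The plan is to exploit the uniqueness of monotone solutions of \eqref{masterc} furnished by Proposition \ref{prop:existmaster}: writing $U$ for that unique solution, it suffices to show that every limit $V$ produced by Proposition \ref{prop:compact} coincides with $U$, since then the whole sequence $(U^n)$ converges to $U$ regardless of the extracted subsequence. The two preliminary Lemmas \ref{lem:conv1} and \ref{lem:conv2} are designed exactly for this comparison. The first states that $V$ obeys the defining inequality of a monotone solution whenever the test measure $\nu$ lies in $W^{2,\infty}(\T)$ — precisely the regularity needed to give meaning to the Laplacian term $\langle \partial_{xx}V(t_*,m_*),\nu\rangle$ against a possibly non-smooth $V$. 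The second states that the genuine solution $U$ satisfies the same inequality up to an error $\omega(M)$, at minima confined to $\mathcal{P}_M$. I would therefore adapt the comparison argument for monotone solutions of \cite{bertucci2021monotone2}, feeding in the inequality for $V$ from Lemma \ref{lem:conv1} and the inequality for $U$ from Lemma \ref{lem:conv2}.

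Concretely, I would argue by contradiction, supposing $\langle V(t_0,m_0)-U(t_0,m_0),\mu\rangle>0$ for some time $t_0$, measure $m_0$ and test direction $\mu$. As in the proofs of the two lemmas, I would apply Stegall's variational principle \cite{stegall,bishop}, together with a time-doubling penalization and small linear perturbations in $(\phi,\psi)$, to create a strict minimum at an interior point at which the $V$-inequality of Lemma \ref{lem:conv1} and the $U$-inequality of Lemma \ref{lem:conv2} can both be invoked with compatible test data. Subtracting the two inequalities, the Hamiltonian, transport and common-noise contributions either cancel or are controlled using the uniform $\mathcal{C}^{2+\alpha}$-regularity of $V$ in $x$ from Proposition \ref{prop:compact} and the continuity of $\partial_x U,\partial_{xx}U$ from Proposition \ref{prop:uniformcont}, while the coupling term $\langle f(m_*),m_*-\nu\rangle$ yields the decisive sign through the strict monotonicity of $f$ assumed in Proposition \ref{prop:existmaster}. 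This is the mechanism that should force $V=U$.

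The reconciliation of the two regularity constraints is where the real work lies, and I expect it to be the main obstacle. Lemma \ref{lem:conv1} is available only for $\nu\in W^{2,\infty}(\T)$, so any test measure arising at the minima must first be mollified; one then has to check that this mollification neither destroys the strict-minimum structure nor spoils the monotonicity sign, relying on the uniform continuity of the spatial derivatives of $V$ in the measure argument. Lemma \ref{lem:conv2}, on the other hand, holds only on $\mathcal{P}_M$ and carries the error $\omega(M)$, so the entire argument must be run with $m_*$ constrained to $\mathcal{P}_M$, and the contradiction can be extracted only after sending $M\to\infty$ so that $\omega(M)\to 0$. Coordinating these limits — the mollification parameter and $M$ — while keeping the Stegall perturbations small enough that the strict minimum persists and remains interior to the admissible domains, is the delicate point.

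Once a contradiction is obtained for every choice of $(t_0,m_0,\mu)$, one concludes that $V=U$ on $[0,T]\times\T\times\mathcal{P}(\T)$. Since this identifies the limit of every convergent subsequence in Proposition \ref{prop:compact} with the single function $U$, the conclusion of the theorem follows.
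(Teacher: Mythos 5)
There is a genuine gap, and it sits at the very start and the very end of your argument: the quantity you propose to contradict is not one the monotone-solution machinery can see. Your hypothesis is $\langle V(t_0,m_0)-U(t_0,m_0),\mu\rangle>0$ for some test direction $\mu$ decoupled from the measure at which $U,V$ are evaluated — i.e.\ simply $V\neq U$. But the comparison argument built on Lemmas \ref{lem:conv1} and \ref{lem:conv2} (Stegall, time-doubling, subtraction of the two inequalities, monotonicity of $f$) only ever controls the \emph{doubled} functional $\langle U(t,m)-V(s,m'),m-m'\rangle$, in which the pairing measure is exactly the difference of the two arguments. What the contradiction argument delivers is $\inf_{t,m,m'}\langle U(t,m)-V(t,m'),m-m'\rangle\geq 0$, and this does \emph{not} imply $U=V$: for instance $V=U+c$ with $c$ a nonzero constant in $x$ is perfectly compatible with that inequality, and also with your hypothesis. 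This is precisely why the paper's proof has two further steps after the contradiction: first one deduces from the nonnegativity of the doubled functional that $\partial_x U=\partial_x V$ (taking $m'=m+\theta\mu$ and letting $\theta\to 0$ kills the first-order terms), and only then does the \emph{strict} monotonicity of $f$ — used nowhere inside the main contradiction, where plain monotonicity suffices — remove the remaining $x$-constant discrepancy, again via Lemmas \ref{lem:conv1} and \ref{lem:conv2}. Your proposal conflates these stages: it invokes strict monotonicity of $f$ inside the subtraction step, where it plays no role, and omits the gradient-identification step without which equality cannot be concluded.

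A second, more technical divergence: you treat the $W^{2,\infty}$ restriction on test measures in Lemma \ref{lem:conv1} as requiring a mollification of whatever measure arises at the minimum, to be coordinated with the limit $M\to\infty$. This is both unresolved in your sketch (mollifying after locating the minimum changes the functional, so the strict minimum need not persist) and unnecessary: the paper's device is to constrain one of the two measure variables to $\mathcal{P}_M=\{m:\|m\|_{2,\infty}\leq M\}$ in the doubled functional, so that the minimizer $m_*$ is \emph{automatically} a $W^{2,\infty}$ measure and can itself serve as the test measure $\nu$ in Lemma \ref{lem:conv1} applied to $V$ at $(s_*,m_*')$, while Lemma \ref{lem:conv2} (with its error $\omega(M)$) handles $U$ at $(t_*,m_*)$; the contradiction $-\kappa/(2T)\geq-\omega(M)$ is then reached by sending $M\to\infty$, and the boundary cases $t_*=0$ or $s_*=0$ are excluded by taking the time-penalization parameter $\gamma$ large. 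With the contradiction hypothesis reformulated as negativity of the doubled functional, with the constraint trick replacing mollification, and with the two concluding steps ($\partial_x U=\partial_x V$, then strict monotonicity of $f$) added, your outline becomes the paper's proof.
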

	\begin{proof}
		Denote by $U$ a monotone solution of \eqref{masterc} and by $V$ a function given by Proposition \ref{prop:compact}. Assume that 
		\be
		\inf_{t \in [0,T],m,m' \in \mathcal{P}(S^n)} \langle U(t,m) - V(t,m'),m-m'\rangle < 0.
		\ee
		Hence, using the uniform continuity of $U$ and $V$, there exists $\kappa > 0$, such that for any $M$ large enough, and any $\gamma > 0$
		\be\label{absurdhyp}
		\inf_{t,s \in [0,T]^2, m \in \mathcal{P}_M,m' \in \mathcal{P}(\mathbb{T})} \langle U(t,m) - V(s,m'),m-m'\rangle + \gamma(t-s)^2 \leq -\kappa,
		\ee
		where $\mathcal{P}_M = \{ m \in \mathcal{P}(\mathbb{T}), \|m\|_{2,\infty} \leq M\}$, which is a compact set. Hence, thanks to Stegall's Lemma, for $\epsilon > 0$ sufficiently small there exists $\delta,\delta' \in ((4T)^{-1}\kappa, (2T)^{-1}\kappa)$, $\phi,\phi' \in C^2$ such that $\|\phi\|_2  + \|\phi'\|_2\leq \epsilon$ and 
		\be
		(t,s,m,m') \to  \langle U(t,m) - V(s,m'),m-m'\rangle + \gamma(t-s)^2 + \langle \phi,m\rangle + \langle \phi',m'\rangle + \delta(T- t) + \delta' (T-s)
		\ee
		has a strict minimum on $[0,T]^2\times \mathcal{P}_M\times \mathcal{P}(\mathbb{T})$ at $(t_*,s_*,m_*,m'_*)$ which is less than $-\frac{\kappa}{2}$. Assume first that $t_*,s_* > 0$.
		Using Lemma \ref{lem:conv1} at this point, we obtain that
		\be
		\begin{aligned}
			&-\delta' -2\gamma (s_*-t_*) + \langle H(x,\partial_xV) + \lambda(V(s_*,m'_*) - \mathcal{A}^*V(s_*,\mathcal{A}m'_*)), m'_* - m_*\rangle \geq \langle f(m'_*), m'_* - m_*\rangle\\
			& - \langle V(s_*,m'_*)-  U(t_*,m_*)+ \phi', \partial_x(\partial_pH(x,\partial_xV)) m'_*)\rangle + \sigma\langle \partial_{xx} (U(t_*,m_*) - \phi') ,m'_*\rangle - \sigma\langle \partial_{xx} V(s_*,m'_*),m_*\rangle.
		\end{aligned}
		\ee
		On the other hand, Lemma \ref{lem:conv2} yields
		\be
		\begin{aligned}
			-\delta - 2\gamma(t_*-s_*) + &\langle H(x,\partial_xU) + \lambda(U(t_*,m_*) - \mathcal{A}^*U(t_*,\mathcal{A}m_*)), m_* - m'_*\rangle \geq \langle f(m_*), m_* - m'_*\rangle\\
			& - \langle U(t_*,m_*)-  V(s_*,m'_*)+ \phi, \partial_x(\partial_pH(x,\partial_xU) m_*)\rangle \\
			&+ \sigma\langle \partial_{xx} (V(s_*,m'_*) - \phi),m_*\rangle - \sigma\langle \partial_{xx} U(t_*,m_*),m'_*\rangle - \omega(M).
		\end{aligned}
		\ee
		Combining the two relations, using the convexity of $H$ and the monotonicity of $f$ yield
		\be
		\begin{aligned}
			-\delta -\delta' &+ \lambda( \langle U(t_*,m_*) - V(s_*,m'_*), m_* - m'_*\rangle - \langle U(t_*,\mathcal{A}m_*) - V(s_*,\mathcal{A}m'_*), \mathcal{A}m_* - \mathcal{A}m'_*\rangle)\\
			& \geq -\omega(M) - \langle \phi', \partial_x(\partial_pH(x,\partial_xV) m'_*)\rangle - \sigma\langle \partial_{xx} \phi',m'_*\rangle - \langle \phi, \partial_x(\partial_pH(x,\partial_xU) m_*)\rangle - \sigma\langle \partial_{xx} \phi,m_*\rangle.
		\end{aligned}
		\ee
		Hence, if $\epsilon$ is chosen small enough, we obtain that 
		\be
		-\frac{\kappa}{2T} \geq -\omega(M),
		\ee
		which is a contradiction if $M$ is large enough.
		
		Consider now the case $t_* = 0$ (the case $s_* = 0$ is similar). In this situation, using the continuity of $U$ and $V$, taking $\gamma$ sufficiently large immediately contradicts \eqref{absurdhyp}. Hence \eqref{absurdhyp} is false and 
		\be
		\inf_{t \in [0,T],m,m' \in \mathcal{P}(S^n)} \langle U(t,m) - V(t,m'),m-m'\rangle \geq 0.
		\ee
		From this we deduce, as in \cite{bertucci2021monotone2} for instance, that $\nabla_x U = \nabla_x V$. %Once this is established, to obtain the equality between $U$ and $V$ follows exactly as in \cite{bertucci2021monotone2} from the strict monotonicity of $f$, using Lemmata \ref{lem:conv1} and \ref{lem:conv2} instead of the usual definition of monotone solutions. Hence we do not detail this argument here.
	\end{proof}
	\begin{rem}
		\label{rem:uniq:mono}
		As usual with monotone solutions, the full equality of two solutions, and not only of their gradient with respect to $x$, can be obtained under additional assumptions. We do not present in full details such directions. The most natural way is to extend (if possible) the domain on which the master equation is set, to positive measure with mass at least one for instance, and not just probability measures, such as in the next section. Or we can assume a stronger monotonicity requirement on $f$, such as 
		\be
		\langle f(\mu) - f(\nu),\mu - \nu \rangle = 0 \Rightarrow \mu = \nu.
		\ee 
		Note that this condition is stronger than the monotonicity assumption in \eqref{eq:strongmonotonicity} since, for example, constant functions are included in \eqref{eq:strongmonotonicity}. 
	\end{rem}

	\subsection{Rate of convergence to a classical solution} 
	\label{sec:4.6} 
	In this section we establish a rate for the convergence of $(U^n)_{n \geq 0}$ toward $U$ when $U$ is a classical solution of \ref{masterc}. To simplify the following discussion we assume that the master equations are set on $\mathcal{M}_2(\mathbb{T})$, the set of positive measures of mass at most $2$ on $\mathbb{T}$. We assume that $f$ and $g$ are indeed defined and monotone on $\mathcal{M}_2(\T)$. We also assume that $f$ and $g$ satisfy the requirements of Proposition \ref{prop:existmaster} where by extension, 
	\be
	W_1(\mu,\nu) = \inf_{\phi}\langle\phi, \mu - \nu\rangle,
	\ee
	where the supremum is taken over $1$-Lipschitz functions $\phi$ such that $\phi(0) = 0$.

	We thus assume that there exists $U$, a classical solution of \eqref{masterc} on $[0,T]\times \mathbb{T}\times\mathcal{M}_2(\mathbb{T})$. By extension we consider the master equation in finite state space \eqref{master:nc} on $[0,T]\times S^n\times \mathcal{M}_2(S^n)$.
	The associated concepts of monotone solution on $\mathcal{M}_2(\mathbb{T})$ or $\mathcal{M}_2(S^n)$ are exactly the same as before except for replacing $\mathcal{P}$ by $\mathcal{M}_2$ in the Definitions \ref{def:monotonec} and \ref{def:monotonef}.
	We proceed as in the case without common noise and consider $V^n$ defined by $V^n(t,x,m) = U(t,x,m)$ on $[0,T] \times S^n \times \mathcal{M}_2(S^n)$. As in the case without common noise, the following holds.

	\begin{prop}
		The function $V^n$ satisfies
		\be\label{master:ncr}
		-\partial_t V^n(t,\cdot,m) + (F^n(m,V^n)\cdot \nabla_m) V^n + \lambda(V^n - A_n^*V^n(t,A_nm)) = G^n(m,V^n) + r^n.
		\ee
		with $|r^n(t,x,m)| \leq C \omega(\frac1n)$, where $\omega$ is a modulus of continuity of $\partial_x U$, $\partial_{xx} U$, $D_m U$, $\partial_y D_m U(\cdot, y)$. In particular, it is a monotone solution of this equation.
	\end{prop}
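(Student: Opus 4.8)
The plan is to proceed exactly as in the absence of common noise: insert $V^n(t,x,m)=U(t,x,m)$ into the left-hand side of \eqref{master:nc} and read off $r^n$ as the resulting defect, so that $V^n$ solves \eqref{master:ncr} with this $r^n$ by construction. Every term already present in the master equation without common noise is treated verbatim as in the proof of the consistency estimate \eqref{Vn}. Since $U$ is a classical solution of \eqref{masterc}, it carries the regularity recorded in the statement (bounded and uniformly continuous $\partial_x U$, $\partial_{xx}U$, $D^m U$ and $\partial_y D^m U$), so each discrete object in \eqref{master:nc} matches its continuous counterpart in \eqref{masterc} at the rate dictated by the heuristics of Section \ref{sec:1.2}: one has $\Delta^n_{\pm}V^n=\pm\partial_x U+O(\omega(\tfrac1n))$, $\Delta^n_2 V^n=\partial_{xx}U+O(\omega(\tfrac1n))$, and the first and second finite differences of $\partial_{m_y}V^n$ reproduce $D^m U$ and $\partial_y D^m U$ up to $O(\omega(\tfrac1n))$, the errors being governed respectively by the moduli of $\partial_x U$, $\partial_{xx}U$, $D^m U$ and $\partial_y D^m U$. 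As before the first-order transport terms recombine through $\partial_p H_{\uparrow}+\partial_p H_{\downarrow}=\partial_p H$, so this part of $r^n$ obeys the claimed bound.

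The only genuinely new contribution comes from the common-shock term $\lambda\big(V^n-A_n^*V^n(t,A_nm)\big)$, which must be compared with $\lambda\big(U-\mathcal{A}^*U(t,\mathcal{A}m)\big)$, and I expect this to be the main (indeed the only) obstacle. I would control it in two steps. First, since $K$ is smooth, the operator $(A_n m)_i=\tfrac1n\sum_j K_{i,j}m_j$ is a quadrature approximation of $\mathcal{A}m=\int_{\T}K(\cdot,y)m(dy)$, and the atomic measure $A_n m$ on $S^n$ approximates the smooth density $\mathcal{A}m$ in $\mathbf{d}_1$ with error $O(\tfrac1n)$, uniformly in $m\in\mathcal{M}_2(S^n)$; by the same reasoning the adjoint $A_n^*$ approximates $\mathcal{A}^*$ on the $\mathcal{C}^2$-in-$x$ function $U(t,\cdot,\mathcal{A}m)$ with error $O(\tfrac1n)$. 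Second, combining this with the $\mathbf{d}_1$-Lipschitz continuity of $U$ in the measure argument one obtains
\[
\big| A_n^*V^n(t,x,A_n m)-\mathcal{A}^*U(t,x,\mathcal{A}m)\big|\leq \frac{C}{n}.
\]
As every modulus of continuity may be taken concave, one has $\tfrac1n\leq C\,\omega(\tfrac1n)$, so this term is absorbed into the global estimate $|r^n(t,x,m)|\leq C\omega(\tfrac1n)$. The care needed here is essentially bookkeeping of the integral kernel against its discretization, and the correct identification of $A_n^*$ with the adjoint of $A_n$.

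Finally, the monotone-solution claim is immediate once \eqref{master:ncr} is established. By construction $V^n$ is a classical solution of \eqref{master:ncr}: as the restriction of the smooth function $U$ it is $\mathcal{C}^1$ on the finite-dimensional simplex, with $\partial_{m_j}V^n$ continuous, and it satisfies \eqref{master:ncr} pointwise. It therefore suffices to recall that a classical solution is always a monotone solution in the sense of Definition \ref{def:monotonef}: at any strict minimum $(t_*,m_*)$ of $(t,m)\mapsto \langle V^n(t,m)-a,\,m-\nu\rangle-\psi(t)$, the first-order optimality conditions in $t$ and along the admissible directions $e_j-e_i$, together with the equation satisfied pointwise by $V^n$, yield precisely the defining inequality. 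This is the standard consistency between classical and monotone solutions in the finite-state setting, carried out in \cite{bertucci2021monotone}, which I would invoke rather than reprove. Hence $V^n$ is the monotone solution of \eqref{master:ncr}.
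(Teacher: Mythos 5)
Your proposal is correct and takes essentially the same approach as the paper: the paper's proof (given only by reference to the case without common noise) is exactly this consistency computation, in which $r^n$ is read off as the defect and the heuristics of \S \ref{sec:1.2} are made rigorous through the moduli of continuity of $\partial_x U$, $\partial_{xx}U$, $D^m U$ and $\partial_y D^m U$. The two points you add explicitly --- the $O(1/n)$ quadrature/adjoint estimate for the common-noise term $\lambda(V^n - A_n^*V^n(t,A_nm))$ versus $\lambda(U - \mathcal{A}^*U(t,\mathcal{A}m))$, absorbed into $C\omega(\tfrac1n)$ via concavity of the modulus, and the standard implication that a classical solution is a monotone solution as in \cite{bertucci2021monotone} --- are precisely the details the paper leaves implicit, and both are handled correctly.
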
 
	
	We can now state the result about the convergence rate for classical solutions. The proof is different from the one employed in \ref{thm:conv:Un} because a system of characteristics is not available for this kind of common noise. Instead, the argument we use is inspired from the proof of uniqueness of classical solutions in the monotone regime presented in \cite{lions2007cours}. Clearly, this argument applies also to the case without common noise, but gives a worse convergence rate with respect to \eqref{conv:Un}.

	\begin{thm}
		\label{thm20}
		There exists $C>0$ such that
		\be
		\sup_{t \in [0,T], x\in S^n, m \in \mathcal{P}(S^n)} |U^n(t,\cdot,m) - V^n(t,\cdot,m) | \leq C \left(\omega\left(\frac 1n\right)\right)^{\frac 13}.
		\ee
	\end{thm}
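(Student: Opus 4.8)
The plan is to run a quantitative version of the monotonicity (doubling of variables) argument that proves uniqueness of classical solutions in \cite{lions2007cours} and already underlies the qualitative comparison of Theorem \ref{thm:conv:18}, but now tracking the consistency defect $r^n$. The decisive structural facts are that $U^n$ solves \eqref{master:nc} exactly while $V^n$ solves the \emph{same} operator up to the remainder $r^n$ with $\|r^n\|_\infty\le C\omega(\tfrac1n)$ (see \eqref{master:ncr}), and that by Proposition \ref{prop:compact} and Theorem \ref{thm:estimate} both functions carry uniform-in-$n$ spatial regularity and the bound $\xi\cdot D_mU^n\cdot\xi\le C\langle\xi,\xi\rangle$. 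Since both are smooth in $m$ on the finite–dimensional set $\mathcal M_2(S^n)$, the whole computation can be carried out with classical derivatives: the doubling skeleton is used only to localise, and no viscosity test functions are needed.

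Concretely, I would fix $t_0$ and an initial datum $m_0$ and introduce two measure flows issued from $m_0$: the flow $(m_t)$ driven by the velocity field $F^n(\cdot,U^n)$ and the flow $(m'_t)$ driven by $F^n(\cdot,V^n)$, both carrying the \emph{same} common shocks $m\mapsto A_nm$ triggered by a single Poisson clock of intensity $\lambda$. I would then differentiate in time the monotonicity functional
\[
\E\big\langle U^n(t,\cdot,m_t)-V^n(t,\cdot,m'_t),\,m_t-m'_t\big\rangle ,
\]
insert \eqref{master:nc} and \eqref{master:ncr}, and collect terms. The terminal contribution equals $\E\langle g(m_T)-g(m'_T),m_T-m'_T\rangle\ge0$ by monotonicity of $g$, while the initial one vanishes because both flows start at $m_0$; the running integrand splits, exactly as in Theorems \ref{thm:conv5} and \ref{thm:conv:18}, into (i) a nonnegative quadratic in the difference of the optimal drifts/spatial gradients coming from the uniform convexity of $L$ (equivalently from the convexity inequality for $H_\uparrow,H_\downarrow$), (ii) the monotone coupling $\langle f(m_t)-f(m'_t),m_t-m'_t\rangle\ge0$, (iii) the common-noise contribution built from $\lambda(U^n-A_n^*U^n(A_n\cdot))$ and $\lambda(V^n-A_n^*V^n(A_n\cdot))$, whose difference has a favourable sign thanks to the adjoint identity for $A_n$ and monotonicity, just as in Theorem \ref{thm:conv:18}, and (iv) remainder terms each carrying a factor $r^n$. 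Collecting the signs and applying Gronwall along the coupled flows then bounds the integrated quadratic, i.e. the $L^2(\mathrm dm\,\mathrm dt)$ difference of drifts/gradients, by a quantity controlled by $\omega(\tfrac1n)$.

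The main obstacle, and the reason the rate is only $(\omega(\tfrac1n))^{1/3}$ rather than the $\omega(\tfrac1n)$ of Theorem \ref{thm:conv:Un}, is the final passage from this integrated monotonicity information to the \emph{uniform} bound $\|U^n-V^n\|_\infty$. For the common noise considered here no system of characteristics, and no single-agent control representation, is available, so one cannot simply read off the value from the flows as in Step 4 of Theorem \ref{thm:conv5}. Instead I would combine the quantitative monotonicity defect with the \emph{strict} monotonicity of $f$ — following the scheme by which Theorem \ref{thm:conv:18} upgrades $\langle U-V,m-m'\rangle\ge0$ first to $\nabla_xU=\nabla_xV$ and then to $U=V$ — to control $\partial_x(U^n-V^n)$ first and the value afterward. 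The uniform $\mathcal C^{2+\alpha}$-in-$x$ regularity of both $U^n$ and $V^n$ from Proposition \ref{prop:compact} enters precisely here, to interpolate the $L^2(\mathrm dm)$-type gradient control against the uniform higher regularity; it is the optimisation of this interpolation against the order-$\omega(\tfrac1n)$ monotonicity defect that produces the exponent $\tfrac13$.

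Finally, the localisation at a strict minimiser (via Stegall's lemma, as in Theorem \ref{thm:conv:18}) together with the boundary case $t_0=0$ would be handled exactly as there, taking the penalisation parameters to infinity to rule out degenerate minimisers. Since the remainder estimate $\|r^n\|_\infty\le C\omega(\tfrac1n)$ and all the regularity bounds invoked are uniform in $(t,x,m)$, every constant above is independent of $n$, which yields the claimed bound for all $t\in[0,T]$ and $m\in\mathcal P(S^n)$.
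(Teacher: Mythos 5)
Your first step rests on a premise the paper cannot grant: that $U^n$ is differentiable in $m$, so that $t\mapsto U^n(t,\cdot,m_t)$ can be differentiated along your coupled flows. In the common-noise setting of Section \ref{sec:4}, $U^n$ is only the \emph{monotone} solution produced by Proposition \ref{prop:existmaster}: the Hamiltonians $H_\uparrow,H_\downarrow$ in \eqref{Hn} are not $\mathcal{C}^2$ and $f,g$ are not assumed differentiable in $m$, so no classical solution of \eqref{master:nc} is available --- this is precisely why the whole section is built on monotone solutions. Only $V^n$, being the restriction of the classical limit solution $U$, is smooth in $m$. The paper therefore obtains the quantitative monotonicity bound without ever differentiating $U^n$: it runs the doubling/Stegall argument of Theorem \ref{thm:conv:18} for the pair of monotone solutions of \eqref{master:nc} and \eqref{master:ncr}, keeping track of the defect $r^n$, and concludes $\inf_{t,m,m'}\langle U^n(t,m)-V^n(t,m'),m-m'\rangle\ge -C\,\omega(\tfrac1n)$ \emph{at all pairs} $(m,m')$. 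Note also that, even granting smoothness, launching both flows from the \emph{same} $m_0$ yields only an integrated quadratic bound along those flows; it does not give this bilinear lower bound at arbitrary pairs, which is the actual input for the rest of the argument. (Incidentally, in the flow picture with a shared Poisson clock the common-noise terms cancel exactly against the jump part of the expectation; the ``favourable sign'' you invoke is a feature of the doubling argument at a global minimum, not of the flow computation.)

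The second half is where the exponent $\tfrac13$ is actually produced, and your mechanism for it is not the one that works. The paper's proof of this theorem uses neither strict monotonicity of $f$, nor the $\mathcal{C}^{2+\alpha}$ regularity in $x$, nor any identification $\partial_x U^n=\partial_x V^n$. Instead, for fixed $(t,m)$ and any $z\in\mathcal{M}_2(S^n)$ it inserts $m'=(1-h)m+hz$ into the bilinear bound: the Lipschitz continuity of $V^n$ in the measure gives $\langle V^n(t,m)-U^n(t,m),z-m\rangle\ge -C\omega(\tfrac1n)/h-Ch$, hence $\ge -C\sqrt{\omega(\tfrac1n)}$ after optimizing in $h$; the symmetric step, performed on $U^n$, can only invoke the H\"older-$\tfrac12$ estimate in $W_1$ of Theorem \ref{thm:estimate}, giving $\langle V^n(t,m)-U^n(t,m),m-z\rangle\ge -C\omega(\tfrac1n)/h-C\sqrt h$, and balancing $\omega(\tfrac1n)/h$ against $\sqrt h$ is exactly what yields $(\omega(\tfrac1n))^{1/3}$. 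Pointwise bounds are then extracted by choosing $z=m+m'$, which is admissible only because \S\ref{sec:4.6} poses both master equations on $\mathcal{M}_2$ (positive measures of mass at most $2$) --- a device your proposal never invokes and which cannot be bypassed if $z$ ranges over $\mathcal{P}(S^n)$ alone. Your alternative route --- upgrading the $L^2(dm\,dt)$ gradient control via strict monotonicity of $f$ and interpolation against uniform $\mathcal{C}^{2+\alpha}$ bounds in $x$ --- mimics the qualitative uniqueness scheme of Theorem \ref{thm:conv:18}, but you give no computation showing it can be made quantitative, and no reason it should output the rate $(\omega(\tfrac1n))^{1/3}$; as it stands this step is a gap, not a proof.
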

	\begin{proof}
		Define $W$ by $W(t,m,m') = \langle U^n(t,m) - V^n(t,m'), m- m'\rangle$ and $\kappa$ by
		\be
		-\kappa = \inf_{t\in[0,T],m,m'\in \mathcal{M}_2(S^n)} W(t,m,m').
		\ee
		Using the fact that $U^n$ is a monotone solution of \eqref{master:nc} and $V^n$ a monotone solution of \eqref{master:ncr}, we arrive at
		
		\be
		-\frac{\kappa}{2T} \geq -\inf_{t,m,m'\in \mathcal{M}_2(S^n)}\langle r^n(t,m), m - m'\rangle \geq -C \omega\left(\frac 1n\right).
		\ee
		Take $t \in[0,T],m \in \mathcal{M}_2(S^n)$ and $z \in \mathcal{M}_2(S^n)$. From the previous estimate we obtain for any $h \in (0,1)$
		\be
		\langle V^n(t,(1-h)m + hz) - U^n(t,m),h(z-m)\rangle \geq -C \omega\left(\frac 1n\right). 
		\ee
		Thus,
		\be
		h\langle V^n(t,m) - U^n(t,m),z-m\rangle + \langle V^n(t,(1-h)m + hz) - V^n(t,m),h(z-m)\rangle \geq -C \omega\left(\frac 1n\right). 
		\ee
		Using the Lipschitz regularity of $U$ (hence of $V^n$)
		\be
		\langle V^n(t,m) - U^n(t,m),z-m\rangle \geq - \frac{C}{h}\omega\left (\frac 1n\right) - Ch.
		\ee
		It follows that
		\be
		\inf_{z \in \mathcal{M}_2(S^n)} \langle V^n(t,m) - U^n(t,m),z-m\rangle \geq -C\sqrt{\omega\left( \frac 1n\right)}.
		\ee
		Arguing similarly for $U^n$ and using the uniform H\"older estimate established, we arrive at
		\be
		\inf_{z \in \mathcal{M}_2(S^n)}  \langle V^n(t,m) - U^n(t,m),m-z\rangle \geq -\frac Ch\omega\left(\frac 1n\right) - C\sqrt{h}.
		\ee
		Hence we deduce, using the previous estimate, that
		\be
		\sup_{z,m \in \mathcal{M}_2(S^n)} \left| \langle V^n(t,m) - U^n(t,m),z-m\rangle  \right| \leq C \left(\omega\left(\frac 1n\right) \right)^{\frac 13}
		\ee
		Now take $m,m' \in \mathcal{P}(\mathbb{T})$. From the previous estimate we obtain by choosing $z = m + m'$
		
		\be
		\left| \langle V^n(t,m) - U^n(t,m),m'\rangle  \right| \leq C \left(\omega\left(\frac 1n\right) \right)^{\frac 13},
		\ee
		from which we obtain the desired result.
	\end{proof}

	\subsection{Another approach to convergence: mollification}
	
	We present here, without many details, another method to prove the convergence of the discrete master equation to the continuous one, with common noise and without assuming that there exists a classical solution of the limit equation. This approach is by  means of mollification of the cost functionals. Indeed, an immediate variation of the mollification procedure on the torus introduced in \cite{cecchindelarue2022} turns out to preserve monotonicity. We only present the main lines of this approach as we believe this idea can be of interest for the reader. We do not provide proofs since other arguments have been given above.
	
	Consider then the function $f(x,m)$ (and the same for $g(x,m)$) and assume as usual that it is monotone, smooth in $x$ and continuous but not regular in $m$. Let $f^{n,\delta}(m,x)$ be the usual mollification by convolution in $m$, for $n$ fixed, in the finite dimensional simplex $\mathcal{P}(S^n)$. Clearly, $\lim_{\delta \rightarrow 0}f^{n,\delta} = f$, for $n$ fixed, and preserves the monotonicity. Let also $f^\epsilon$ be (an immediate variation of) the mollification introduced in \cite{cecchindelarue2022} on $\mathcal{P}(\T)$, which preserves monotonicity and converges to $f$, as $\epsilon\rightarrow 0$. Assume that the Hamiltonian is smooth. Then discrete and continuous master equations related to the coefficients $f^{n,\delta}$ and $f^\epsilon$, respectively, admit classical solutions, which we denote by $U^{n,\delta}$ and $U^{\epsilon}$; the solutions with cost function $f$ are as usual denoted by $U^n$ and $U$. Moreover, we have that 
	\begin{align*}
		\lim_{\delta \rightarrow 0} \sup_{t\in [0,T], x\in S^n, m\in \mathcal{P}(S^n)} &|U^{n,\delta}(t,x,m) - U^n(t,x,m) | = 0, \\
		\lim_{\epsilon \rightarrow 0} \sup_{t\in [0,T], x\in \T, m\in \mathcal{P}(T)} & |U^{\epsilon}(t,x,m) - U(t,x,m) | = 0,
	\end{align*}   
	the first line being, of course, not uniform in $n$ a priori. 
	
	Moreover, the use of Theorem \ref{thm20} gives an estimate of the form 
	\[
	\|U^{n,\delta} - U^{\epsilon}\|_{\infty} \leq C(\epsilon,\delta,n),
	\]
	with $C$ such that 
	\[
	\lim_{\epsilon \to 0} \lim_{n \to \infty} \lim_{\delta \to 0} C(\epsilon,\delta,n) = 0.
	\]
	Hence, using $U^{n,\delta}$ and $U^{\epsilon}$ as intermediates to bound the difference between $U^n$ and $U$, passing to the limit in the order of the previous equation, this method leads to
	\be 
	\lim_{n\rightarrow \infty} \sup_{t\in [0,T], x\in S^n, m\in \mathcal{P}(S^n)} |U^{n}(t,x,m) - U(t,x,m) | =0.
	\ee

	\subsection{A weaker notion of monotone solution}
	We conclude this part on the common noise by indicating another definition of monotone solution which could have been used here and that we believe to have an interest in itself. This concept allows to deal with monotone solution of \eqref{masterc} which are $\mathcal{C}^{1+\alpha}$ for $\alpha \in (0,1)$ with respect to the space variable $x$. This allows to avoid the assumption on the uniform continuity of the space Laplacian of the solution with respect to the measure. The following is very much in the flavor of the work done in \cite{cardaliaguet2021weak} in which this method is introduced to deal with first order MFG.
	
	The definition at interest here is
	\begin{defn}
		A continuous function $U$, uniformly $\mathcal{C}^{1+\alpha}$ in space, is a monotone solution of \eqref{masterc} if there exists a constant $C > 0$ such that for any $\epsilon > 0,\nu \in \mathcal{M}(\mathbb{T})\cap W^{1,\infty}$, $\mathcal{C}^{1+\alpha}$ function $\phi$ of the space variable and $\mathcal{C}^1$ function $\psi$ of the time variable, for any $(t_*,m_*)\in[0,T)\times \mathcal{P}(\mathbb{T})$ point of strict minimum of $(t,m) \to \langle U(t,m) - \phi, m - \nu\rangle - \psi(t) + \epsilon\|m\|_{1,\infty}$, the following holds
		\be
		\begin{aligned}
			&-\frac{d \psi(t_*)}{dt} + \langle -\sigma \partial_{xx} U + H(x,\partial_xU) + \lambda(U - \mathcal{A}^*U(\mathcal{A}m_*)), m_* - \nu\rangle \geq \langle f(m_*), m_* - \nu\rangle\\
			& - \langle U-  \phi, \partial_x(\partial_pH(x,\partial_xU) m_*)\rangle -\sigma \langle \partial_{xx}(U-\phi),m_*\rangle - C\epsilon.
		\end{aligned}
		\ee
	\end{defn}
	The main idea of this definition is to use the fact that, independently of the strategies of the players, the evolution of the underlying repartition of players is continuous in a space of regular repartition of players. The constant $C$ in the previous is directly related to this smoothness.
	
	In some sense, the penalization term in $\epsilon$ in the minimization of the function constrains the minima to be in $W^{1,\infty}$, and this penalization only has a cost $C \epsilon$ because of the smoothness of the evolution of the repartition of players. To derive this formulation, consider a classical solution $U$ of \eqref{masterc}. To lighten notation, we do not come back on the interpretation of the time derivative or of the common noise. Hence, we take $\lambda = 0$ and consider $t\geq 0$ and a point $m_*$ of minimum of $m \to  \langle U(t,m) - \phi, m - \nu\rangle + \epsilon\|m\|_{\infty}$. Denote by $(m_s)_{s \geq 0}$ the solution of 
	\be
	\partial_s m - \sigma\partial_{xx}m - \partial_x(m \partial_pH(x,\partial_x U(t,x,m_s))) = 0 \text{ in } (0,\infty)\times \mathbb{T}
	\ee
	with initial condition $m_0 = m_*$. By definition of $m_*$, for any $s \geq 0$ :
	\be
	\langle U(t,m_*) - \phi, m_* - \nu\rangle + \epsilon\|m_*\|_{\infty} \leq \langle U(t,m_s) - \phi, m_s - \nu\rangle + \epsilon\|m_s\|_{\infty}.
	\ee
	Hence using the fact that $U$ is a classical solution of \eqref{masterc}, we deduce by dividing the previous inequality by $s$ and letting $s\to 0$ that
	\be
	\begin{aligned}
		\langle- \partial_t U,m_* - \nu \rangle + \langle -\sigma\partial_{xx} U + &H(x,\partial_xU) , m_* - \nu\rangle \geq \langle f(m_*), m_* - \nu\rangle - \langle U-  \phi, \partial_x(\partial_pH(x,\partial_xU) m_*)\rangle\\
		&  - \sigma\langle \partial_{xx}(U-\phi),m_*\rangle  + \epsilon \liminf_{s \to 0}s^{-1}(\|m_*\|_{1,\infty} - \|m_s\|_{1,\infty}).
	\end{aligned}
	\ee
	However, since $U$ is $\mathcal{C}^{1+\alpha}$ in $x$, uniformly in $t$ and $m$, there exists $C > 0$ such that, for any $m_*$
	\be
	\liminf_{s \to 0}s^{-1}(\|m_*\|_{1,\infty} - \|m_s\|_{1,\infty}) \geq - C.
	\ee
	This last inequality is a consequence of propagation of $\|\cdot \|_{1,\infty}$ norms by the Fokker-Planck equation
	\be
	\partial_t m - \sigma \Delta m + \text{div}(bm) = 0 \text{ in } (0,\infty)\times \mathbb{T}^d,
	\ee
	for a vector field $b$ in $L^{\infty}((0,\infty),C^{0,\alpha})$. The proof is trivial in dimension $1$ as one can simply integrate the Fokker-Planck equation and use standard parabolic estimates in H\"older norms. In dimension $d \geq 1$ the proof of such a regularity is more involved but it remains true. As this question is far from the main topic of this article we do not detail such a proof here.\\
	
	As a consequence of the previous remark, results of existence and uniqueness of such monotone solutions can be established quite easily following \cite{bertucci2021monotone2,cardaliaguet2021weak}.

	\appendix 
	
	\section{Convergence rate for a diffusion approximation}
	\label{appendixA}
	
	We state here a general result about the approximation of a diffusion (on the torus) with a continuous time Markov chain, which is used several times in the paper and we believe might be of independent interest. The main result is to establish a rate for the convergence of the laws in the Wasserstein distance. Although the approximation of diffusions by Markov chains is certainly not a novelty, we have not been able to find a similar result in the literature.  We rely on an estimate of the the distance between the generators and the semigroups of the processes, which is inspired by the methods of \cite{kolo}, and then on a relation among distances on the space of probability measures.    
	
	\begin{prop} 
		\label{lem:7}
		Let $\alpha\in \mathcal{C}^{\gamma/2, \gamma}([0,T] \times \T)$, for a $\gamma\in (0,1)$,  and let $Y$ satisfy \eqref{dyn} with such $\alpha$ and $Y_0 \sim m_0$. Let also $Y_n$, for any $n$, satisfy \eqref{rate:n} with rates $\alpha_{\pm}$ therein given by 
		$\alpha_+ = \psi_+(\alpha)$ and $\alpha_- = \psi_-(\alpha)$ with $\psi_{\pm}: \R \rightarrow \R$ Lipschitz and such that $\psi_+(\alpha) - \psi_-(\alpha) = \alpha$. Suppose also that 
		$\Law(Y^n_0) = m^n_0$, with $W_1(m^n_0, m_0) \leq \frac1n$.  
		Then 
		\be 
		\label{conv:77}
		\lim_n Y^n = Y    \qquad \mbox{ in law in } \mathcal{D}([0,T], \T).
		\ee
		and  
		\be 
		\label{conv:76}
		\sup_{0\leq t\leq T} W_1( \Law(Y^n_t), \Law(Y_t)) 
		\leq  \frac{C}{n^\frac{\gamma}{2+\gamma}}. 
		\ee
		If in addition 
		$\alpha\in \mathcal{C}^{\gamma/2, 1+\gamma}([0,T] \times \T)$ then 
		\be 
		\label{conv:78}
		\sup_{0\leq t\leq T} W_1( \Law(Y^n_t), \Law(Y_t)) 
		\leq  \frac{C}{n^{\frac13}}. 
		\ee
	\end{prop}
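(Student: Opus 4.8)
The plan is to compare the diffusion $Y$ and the chain $Y^n$ at the level of their generators, and then to convert a pointwise generator estimate into a Wasserstein bound by Kantorovich duality together with a mollification of the test function. Denote by $\mathcal{L}_s\phi = \alpha(s,\cdot)\partial_x\phi + \sigma\partial_{xx}\phi$ the (time-dependent) generator of \eqref{dyn}, and by
\[
\mathcal{L}^n_s\phi = \psi_+(\alpha)\,\Delta^n_+\phi + \psi_-(\alpha)\,\Delta^n_-\phi + \sigma\Delta^n_2\phi
\]
the generator of the chain \eqref{rate:n}; as in \S\ref{sec:1.2} one checks that the rates in \eqref{rate:n} produce exactly this operator and that, since $\psi_+-\psi_-=\mathrm{id}$, $\mathcal{L}^n_s\to\mathcal{L}_s$ pointwise on smooth functions. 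Fixing a test function $\phi$, I would introduce $u(s,x)=\E[\phi(Y_t)\mid Y_s=x]$, the solution of the backward Kolmogorov equation $\partial_s u+\mathcal{L}_s u=0$, $u(t,\cdot)=\phi$, and apply Dynkin's formula to $s\mapsto u(s,Y^n_s)$. Using $\partial_s u=-\mathcal{L}_s u$ and $u(t,\cdot)=\phi$ yields the exact identity
\[
\E\phi(Y^n_t)-\E\phi(Y_t) = \big(\E u(0,Y^n_0)-\E u(0,Y_0)\big) + \E\int_0^t (\mathcal{L}^n_s-\mathcal{L}_s)u(s,Y^n_s)\,ds,
\]
whose first term is bounded by $\|\partial_x u(0,\cdot)\|_\infty\,W_1(m^n_0,m_0)\le C/n$. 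Everything thus reduces to the consistency error $(\mathcal{L}^n_s-\mathcal{L}_s)u$.

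Next I would estimate this error by Taylor expansion on the grid. The first-order part $\psi_+(\alpha)(\Delta^n_+u-\partial_xu)+\psi_-(\alpha)(\Delta^n_-u+\partial_xu)$ is $O(\dn\,\|\partial_{xx}u\|_\infty)$ — the non-cancelling numerical-viscosity term $\tfrac{\dn}{2}(\psi_+(\alpha)+\psi_-(\alpha))\partial_{xx}u$ is precisely why this contribution is only first order in $\dn$ — while the second-order part $\sigma(\Delta^n_2u-\partial_{xx}u)$ is bounded by $C\dn^{\gamma}[\partial_{xx}u]_{\mathcal{C}^\gamma}$. For smooth terminal data $\phi\in\mathcal{C}^{2+\gamma}(\T)$, parabolic Schauder estimates for the non-degenerate ($\sigma>0$) equation with $\mathcal{C}^{\gamma/2,\gamma}$ coefficient $\alpha$ give $\sup_{s}\|u(s,\cdot)\|_{2+\gamma}\le C\|\phi\|_{2+\gamma}$, uniformly up to the terminal time; integrating the consistency estimate then produces
\[
\big|\E\phi(Y^n_t)-\E\phi(Y_t)\big|\le C\,\dn^{\gamma}\,\|\phi\|_{2+\gamma}+\frac{C}{n}.
\]

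The third step is the passage to $W_1$. Given any $1$-Lipschitz $\phi$, I would mollify it at scale $\epsilon$ into $\phi_\epsilon$, with $\|\phi-\phi_\epsilon\|_\infty\le C\epsilon$ and $\|\phi_\epsilon\|_{2+\gamma}\le C\epsilon^{-(1+\gamma)}$, and split the difference accordingly. By Kantorovich duality, $W_1(\Law(Y^n_t),\Law(Y_t))=\sup_{\|\phi\|_{\mathrm{Lip}}\le1}\big(\E\phi(Y^n_t)-\E\phi(Y_t)\big)$, so
\[
\sup_{0\le t\le T}W_1(\Law(Y^n_t),\Law(Y_t))\le C\epsilon + C\,\dn^{\gamma}\epsilon^{-(1+\gamma)}+\frac{C}{n},
\]
and optimizing with $\epsilon\sim\dn^{\gamma/(2+\gamma)}$ gives \eqref{conv:76}. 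Under the stronger hypothesis $\alpha\in\mathcal{C}^{\gamma/2,1+\gamma}$, the Schauder estimate improves to a uniform bound on $\|\partial_{xxx}u\|_\infty$ for $\phi\in\mathcal{C}^3$, so both the viscosity and the second-order errors become $O(\dn)$, giving $|\E\phi(Y^n_t)-\E\phi(Y_t)|\le C\dn\|\phi\|_{\mathcal{C}^3}+C/n$; mollifying with $\|\phi_\epsilon\|_{\mathcal{C}^3}\le C\epsilon^{-2}$ and optimizing at $\epsilon\sim\dn^{1/3}$ yields \eqref{conv:78}. Finally, \eqref{conv:77} follows from tightness of $(Y^n)$ in $\mathcal{D}([0,T],\T)$ — jumps of size $\dn$ occur at total rate $O(\dn^{-2})$, so $\E|Y^n_{t+\delta}-Y^n_t|^2=O(\delta)$ and Aldous' criterion applies — together with the identification of every limit point as the unique solution of the martingale problem for $\mathcal{L}$, using $\mathcal{L}^n\phi\to\mathcal{L}\phi$ on smooth $\phi$.

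The hard part will be the uniform, non-blow-up Schauder estimate $\sup_s\|u(s,\cdot)\|_{2+\gamma}\le C\|\phi\|_{2+\gamma}$ for the backward equation with merely $\mathcal{C}^\gamma$-in-space drift, and the careful bookkeeping of the consistency error at this limited regularity; the non-degeneracy $\sigma>0$ is exactly what makes these estimates available and is the reason a rate can be obtained at all. One should also note that it is the mollify-and-optimize step, rather than a direct use of singular-in-time smoothing, that produces the (robust but suboptimal) exponents $\gamma/(2+\gamma)$ and $1/3$.
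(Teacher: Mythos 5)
Your proposal is correct, and its quantitative core is in fact the same as the paper's: your Dynkin-formula identity
\[
\E\phi(Y^n_t)-\E\phi(Y_t)=\big(\E u(0,Y^n_0)-\E u(0,Y_0)\big)+\E\int_0^t(\mathcal{L}^n_s-\mathcal{L}_s)u(s,Y^n_s)\,ds
\]
is precisely the semigroup perturbation formula $\mathcal{S}^n_{t,s}\phi-\mathcal{S}_{t,s}\phi=\int_t^s\mathcal{S}^n_{t,r}(\mathcal{L}^n_r-\mathcal{L}_r)\mathcal{S}_{r,s}\phi\,dr$ that the paper borrows from Kolokoltsov, and both arguments then rest on the same two ingredients: the consistency estimate $\|(\mathcal{L}^n_r-\mathcal{L}_r)\psi\|_\infty\le C n^{-\gamma}\|\psi\|_{2+\gamma}$ (including your correct observation that the first-order numerical-viscosity term only contributes at order $\dn$) and the uniform Schauder bound $\sup_{r\le s}\|\mathcal{S}_{r,s}\phi\|_{2+\gamma}\le C\|\phi\|_{2+\gamma}$ (resp.\ the $\mathcal{C}^{3}$ bound under the stronger hypothesis on $\alpha$). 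Where you genuinely diverge is in the two remaining steps. For the passage from the $\mathcal{C}^{2+\gamma}$-test-function estimate to $W_1$, the paper invokes the Zolotarev metric $\zeta_{2+\gamma}$ and the cited comparison $W_1\le W_r\le c_r\,\zeta_r^{1/r}$ (Rio, in dimension one), which applied to $\zeta_{2+\gamma}\le Cn^{-\gamma}$ (resp.\ $\zeta_3\le Cn^{-1}$) gives exactly the exponents $\gamma/(2+\gamma)$ and $1/3$; you instead mollify an arbitrary $1$-Lipschitz test function at scale $\epsilon$, use $\|\phi_\epsilon\|_{2+\gamma}\le C\epsilon^{-(1+\gamma)}$ (resp.\ $\|\phi_\epsilon\|_{\mathcal{C}^3}\le C\epsilon^{-2}$), and optimize in $\epsilon$, which reproduces the same exponents. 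Your route is more elementary and self-contained (it is, in effect, a direct proof of the only instance of the Zolotarev--Wasserstein comparison that is needed), and it extends verbatim to higher dimension, whereas the inequality $W_r\le c_r\zeta_r^{1/r}$ used by the paper is a one-dimensional result; what the paper's route buys is brevity and a cleaner conceptual statement via an established family of probability metrics. For the convergence in law \eqref{conv:77}, the paper applies a general theorem on convergence of generators over an invariant core (Kallenberg, Thm.\ 19.25), while you use Aldous' tightness criterion plus identification of limit points through the (well-posed, since $\sigma>0$ and $\alpha$ is bounded and H\"older) martingale problem; both are standard and correct. One small bookkeeping point: the initial-condition term should be $\|\partial_x u(0,\cdot)\|_\infty W_1(m^n_0,m_0)\le C\|\phi_\epsilon\|_{2+\gamma}/n$, not $C/n$ with $C$ independent of the test function, but since $n^{-1}\le n^{-\gamma}$ it is absorbed into the main term and the optimization is unchanged.
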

	
	For the application of this result in Section \ref{sec:3}, the controls are to be thought as $\alpha = - \partial_p H(\partial_x u)$, $\alpha_+ = - \partial_p H_\uparrow(\partial_x u)$, $\alpha_- =  \partial_p H_\downarrow(\partial_x u)$. We note that in the case of quadratic Hamiltonian $\alpha_+$ and $\alpha_-$ are simply the positive and negative part of $\alpha$. In particular, the additional assumption  $\alpha\in \mathcal{C}^{\gamma/2, 1+\gamma}([0,T] \times \T)$ of the previous statement always holds under our standing assumptions in \S \ref{S:2.2}.

	\begin{proof}
		We employ the convergence of the generators: denote by $\mathcal{L}^n$ and $\mathcal{L}$ the generators of $Y_n$ and $Y$, respectively. 
		For a function $\phi\in\mathcal{C}^{2+\gamma}(\T)$, with $\gamma\in(0,1]$, we have 
		\begin{align*}
			|\mathcal{L}^n_t &\phi(x) - \mathcal{L}_t \phi(x)| \\
			&= \bigg|
			\left( \frac{\alpha_+(t,x)}{\dn}  +\frac{\sigma}{\dn^2} \right) \left[ \phi(x+\dn) - \phi(x)\right] 
			+\left( \frac{\alpha_-(t,x)}{\dn}  +\frac{\sigma}{\dn^2} \right) \left[ \phi(x-\dn) - \phi(x)\right] \\
			& \qquad - \alpha(t,x) \partial_x \phi(x) 
			- \sigma\partial_{xx} \phi(x) \bigg|
			\\
			&= \Big| \alpha_+(t,x)  \Delta^n_+ \phi(x)  + \alpha_-(t,x)  \Delta^n_- \phi(x)
			+\sigma\Delta_2^n \phi(x) - \alpha(t,x) \partial_x \phi(x) 
			- \sigma\partial_{xx} \phi(x) \Big| \\
			&\leq  |\alpha_+(t,x)| \Big| \Delta^n_+ \phi(x) -\partial_x \phi(x) \Big|  
			+ |\alpha_-(t,x)| \Big| \Delta^n_- \phi(x) +\partial_x \phi(x) \Big|
			+\sigma \Big|\Delta_2^n \phi(x) - \partial_{xx} \phi(x) \Big| \\
			& \leq C\Big( 1+ ||\alpha(t,\cdot)||_\infty\Big)  \frac{||\partial_{xx} \phi||_\infty }{n} 
			+ \sigma \frac{||\partial_{xx} \phi ||_{\gamma}}{n^\gamma },
		\end{align*}
		which implies
		\be 
		\label{eq:78}
		\sup_{0\leq t\leq T} ||\mathcal{L}^n_t \phi - \mathcal{L}_t \phi||_{\infty} \leq 
		\frac{C}{n^\gamma} ( 1 + ||\alpha(\cdot,\cdot)||_\infty ) ||\partial_{xx} \phi ||_\gamma .  
		\ee 
		Convergence of the generators then provides \eqref{conv:77} 
		(applying \cite[Thm. 19.25]{kallemberg}), since $\mathcal{C}^{2+\gamma}(\T)$ is an invariant core of the limiting generator in $\mathcal{C}(\T)$, because of Schauder's estimates. 
		
		To prove \eqref{conv:76}, let $\mathcal{S}^n$ and $\mathcal{S}$ be the semigroups corresponding to $\mathcal{L}^n$ and $\mathcal{L}$: 
		\[
		\mathcal{S}^n_{t,s} \phi (x) = \E[ \phi(X^n_s) | X^n_t=x], \qquad 
		\mathcal{S}_{t,s} \phi (x) = \E[ \phi(X_s) | X_t=x]. 
		\] 
		We recall the usual properties  
		\begin{align*}
			& \mathcal{S}^n_{t,s}  \mathcal{S}^n_{s,r} = \mathcal{S}^n_{t,r} \\ 
			& \frac{d}{ds} \mathcal{S}^n_{t,s}  = \mathcal{S}^n_{t,s}  \mathcal{L}^n_s,
			\qquad \frac{d}{dt} \mathcal{S}^n_{t,s}  = - \mathcal{L}^n_t \mathcal{S}^n_{t,s}  
		\end{align*}
		and similarly for $\mathcal{S}$ and $\mathcal{L}$.  Thus, following Kolokoltsov \cite{kolo},  we can write
		\be 
		\label{133}
		\mathcal{S}^n_{t,s} \phi - \mathcal{S}_{t,s} \phi = 
		\mathcal{S}^n_{t,r} \mathcal{S}_{r,s} |_{r=t}^{r=s} \phi 
		= \int_t^s \frac{d}{dr}\mathcal{S}^n_{t,r} \mathcal{S}_{r,s} \phi dr 
		= \int_t^s \mathcal{S}^n_{t,r} ( \mathcal{L}^n_r -\mathcal{L}_r)    \mathcal{S}_{r,s} \phi dr.
		\ee 
		Thanks to Feynman-Kac formula, the function  $u_s(t,x):=\mathcal{S}_{t,s} \phi(x)$ solves the parabolic backward PDE
		\[
		\begin{cases}
			\partial_t u + \sigma\partial_{xx} u + \alpha(t,x) \partial_x u =0 \qquad &\mbox{ in } [0,s) \times \T, \\
			u(s,x)= \phi(x)  & \mbox{ in } \T.
		\end{cases} 
		\]
		Hence Schauder's estimates, for $\gamma \in (0,1)$, give 
		\be
		\sup_{0\leq t\leq s \leq T}|| \mathcal{S}_{t,s} \phi ||_{2+\gamma} \leq 
		C   ||\phi||_{2+\gamma} 
		\ee
		for a constant $C$ depending on $||\alpha||_{\gamma/2, \gamma}$ and $T$. 
		Moreover, if in addition $\alpha\in \mathcal{C}^{\gamma/2, 1+\gamma}([0,T] \times \T)$ and $\phi \in \mathcal{C}^{2+1}(\T)$, then we have  
		\be
		\sup_{0\leq t\leq s \leq T}|| \mathcal{S}_{t,s} \phi ||_{2+1} \leq 
		C   ||\phi||_{2+1}, 
		\ee
		where $C$ depends on $||\alpha||_{\gamma/2, 1+\gamma}$ and $T$.
		
		From the above estimates and using \eqref{eq:78}, \eqref{133}, and the fact that the transition operator $\mathcal{S}^n$ is a contraction, we obtain, for any $0\leq t< s \leq T$, 
		\begin{align*}
			||\mathcal{S}^n_{t,s} \phi - \mathcal{S}_{t,s} \phi ||_{\infty} 
			&\leq (s-t) \sup_{t\leq r\leq s}||\mathcal{S}^n_{t,r} ( \mathcal{L}^n_r -\mathcal{L}_r)    \mathcal{S}_{r,s} \phi ||_\infty 
			\leq T \sup_{t\leq r\leq s} || ( \mathcal{L}^n_r -\mathcal{L}_r)    \mathcal{S}_{r,s} \phi ||_\infty \\
			&\leq \frac{C}{n^\gamma} \sup_{t\leq r\leq s} || \mathcal{S}_{r,s} \phi ||_{2+\gamma}  
			\leq  \frac{C}{n^\gamma} ||\phi||_{2+\gamma} ,
		\end{align*} 
		whereas, here and below, we fix $\gamma\in(0,1)$ if $\alpha\in \mathcal{C}^{\gamma/2, \gamma}([0,T] \times \T)$, and $\gamma=1$ if $\alpha\in \mathcal{C}^{\gamma/2, 1+\gamma}([0,T] \times \T)$. Therefore, for any 	$0\leq t\leq s \leq T$, 
		\be
		\label{eq:79}
		||  \mathcal{S}^n_{t,s} \phi - \mathcal{S}_{t,s} \phi ||_{\infty} 
		\leq \frac{C}{n^\gamma} ||\phi||_{2+\gamma}. 
		\ee
		We now show that the above estimate, together with the estimate on the initial condition, imply
		\be 
		|\E[  \phi(Y^n_t) ]  - \E[  \phi(Y_t) ]  | \leq  \frac{C}{n^\gamma} ||\phi||_{2+\gamma}
		\qquad \forall \phi \in \mathcal{C}^{2+\gamma}(\T),
		\ee
		uniformly in $t$, 
		that is 
		\be
		\label{eq:81}
		\bigg| \int_\T \phi d( \Law(Y^n_t) - \Law(Y_t) )  \bigg| \leq  \frac{C}{n^\gamma} ||\phi||_{2+\gamma} 
		\qquad \forall \phi \in \mathcal{C}^{2+\gamma}(\T).
		\ee
		Indeed, 
		\begin{align*}
			\E[  \phi(Y^n_t) ]  - \E[  \phi(Y_t) ] &= 
			\int_\T S^n_{0,t} \phi(x) m^n_0(dx) -\int_\T S_{0,t} \phi(x) m_0(dx) \\
			&= \int_\T \big( S^n_{0,t} -S_{0,t} \big)\phi(x) m^n_0(dx) 
			+\int_\T S_{0,t} \phi(x) (m^n_0-m_0)(dx)
		\end{align*}
		and, estimating the first term by \eqref{eq:79} and the second term by $W_1(m^n_0, m_0) \leq \frac1n$ and again by parabolic estimates, we get
		\begin{align*}
			|\E[  \phi(Y^n_t) ]  - \E[  \phi(Y_t) ] | 
			&\leq \frac{C}{n^\gamma} ||\phi||_{2+\gamma} 
			+ || \partial_x (S_{0,t} \phi(\cdot) ) ||_{\infty} W_1(m^n_0, m_0) \\
			&\leq \frac{C}{n^\gamma} ||\phi||_{2+\gamma} + \frac{C}{n} ||\phi||_{2+\gamma}.
		\end{align*}
		
		Finally, the estimates   \eqref{conv:76} and \eqref{conv:78}  are   provided by an estimate of the distance in \eqref{eq:81} (for functions in $\mathcal{C}^{2+\gamma}$) in terms of powers of the Wasserstein distance, which we detail below; see \eqref{eq:139}. 
	\end{proof} 
	
	To complete the above proof, let us give some more details on particular distances on the space of probability measures.  
	We denote by $\zeta_r$, for $r\geq 1$, the \emph{Zolotarev metric} of order $r$, which is defined by 
	\[
	\zeta_r(\mu,\nu) = \sup \left\{ \int_\T \phi d(\mu-\nu) : \phi\in \mathcal{F}_r \right\},
	\]
	where $\mathcal{F}_r$ is the set of $\phi\in\mathcal{C}^{l}(\T)$ with $\phi(0) =\phi'(0) =\dots =\phi^{(l)}(0) =0$, $l$ in the integer such that $l< r \leq l+1$, and 
	$|\phi^{l}(x) - \phi^l(y)| \leq |x-y|^{r-l}$. 
	We note that $\zeta_1=W_1$. In \cite{Rio98}, it is proved in dimension one, for any $r\geq 1$, the relation 
	\be 
	\label{eq:139}
	W_r \leq c_r \zeta_r^{\frac1r}, 
	\ee 
	where $W_r$ is the $r$-Wasserstein distance and $c_r$ is a constant depending just on $r$. For $r=k$ integer, the weaker result 
	\be 
	\label{eq:140}
	W_1 \leq c_k \zeta_k^{\frac1k} 
	\ee 
	is shown to be true in any dimension; see \cite{zolotarev79, zolotarev83} and the more recent  \cite{Bogachev2017}. We also remark that the results \eqref{eq:139} and \eqref{eq:140} hold on the whole space $\R$ or $\R^d$. 
	
	Since we are considering functions on the torus, the set $\mathcal{F}_{2+\gamma}$, for $\gamma\in (0,1]$ is contained in the set of $\phi\in \mathcal{C}^{2+\gamma}$ with $\phi$, $\phi'$ and $\phi''$ bounded by 1 and with $\gamma$-H\"older seminorm bounded by 1, i.e. such that $||\phi||_{2+\gamma} \leq 1$. Thus \eqref{eq:81} implies   
	\[
	\zeta_{2+\gamma}(\Law(Y^n_t), \Law(Y_t) ) \leq \frac{C}{n^\gamma},
	\]
	which yields \eqref{conv:76} and \eqref{conv:78} by means of \eqref{eq:139}, with $r=2+\gamma$ therein.

	%\newpage
	
	%\subsection*{Declarations}
	%\subsubsection*{Funding}
	%C. Bertucci acknowledges financial support of chair FDD of Institut Louis Bachelier. A. Cecchin benefited from the support of LABEX Louis Bachelier Finance and Sustainable	Growth - project ANR-11-LABX-0019, ECOREES ANR Project, FDD Chair and Joint Research Initiative	FiME.
	
	%\subsubsection*{Conflict of interests} All authors certify that they have no affiliations with or involvement in
	%any organization or entity with any financial interest or non-financial interest in the subject
	%matter or materials discussed in this manuscript.

	%\section{}


\begin{thebibliography}{9}
		\bibitem{achdou2010mean} Achdou, Y., and Capuzzo-Dolcetta, I. (2010). Mean field games: numerical methods. \emph{SIAM Journal on Numerical Analysis}, 48(3), 1136-1162.
		\bibitem{achdou2012mean} Achdou, Y., Camilli, F., and Capuzzo-Dolcetta, I. (2012). Mean field games: numerical methods for the planning problem. \emph{SIAM Journal on Control and Optimization}, 50(1), 77-109.
		\bibitem{achdou2013mean} Achdou, Y., Camilli, F., and Capuzzo-Dolcetta, I. (2013). Mean field games: convergence of a finite difference method. \emph{SIAM Journal on Numerical Analysis}, 51(5), 2585-2612.
		\bibitem{almulla2017two} Almulla, N., Ferreira, R., and Gomes, D. (2017). Two numerical approaches to stationary mean-field games. \emph{Dynamic Games and Applications}, 7(4), 657-682.
		
		
		\bibitem{bayraktar2021} Bayraktar, E., Cecchin, A., Cohen, A., and Delarue, F. (2021). Finite state mean field games with Wright--Fisher common noise. \emph{Journal de Math\'ematiques Pures et Appliqu\'ees}, 147, 98-162.
		
		\bibitem{bayraktarcohen} Bayraktar, E., and Cohen, A. (2018). Analysis of a finite state many player game using its master equation. \emph{SIAM Journal
			on Control and Optimization}, 56(5):3538-3568.
		
		\bibitem{benamou2015augmented} Benamou, J. D., and Carlier, G. (2015). Augmented Lagrangian methods for transport optimization, mean field games and degenerate elliptic equations. \emph{Journal of Optimization Theory and Applications}, 167(1), 1-26.
		
		
		\bibitem{bertucci2021monotone} Bertucci, C. (2021). Monotone solutions for mean field games master equations: finite state space and optimal stopping. \emph{Journal de l'\'Ecole polytechnique--Math\'ematiques}, 8, 1099-1132.
		\bibitem{bertucci2021monotone2} Bertucci, C. (2021). Monotone solutions for mean field games master equations: continuous state space and common noise. \emph{arXiv preprint} arXiv:2107.09531. 
		\bibitem{bertucci2020uzawa} Bertucci, C. (2020). A remark on Uzawa's algorithm and an application to mean field games systems. \emph{ESAIM: Mathematical Modelling and Numerical Analysis}, 54(3), 1053-1071.
		
		\bibitem{bertucci2019some} Bertucci, C., Lasry, J. M., and Lions, P. L. (2019). Some remarks on mean field games. \emph{Communications in Partial Differential Equations}, 44(3), 205-227.
		
		
		\bibitem{briceno2018proximal} Briceno-Arias, L. M., Kalise, D., and Silva, F. J. (2018). Proximal methods for stationary mean field games with local couplings. \emph{SIAM Journal on Control and Optimization}, 56(2), 801-836.
		
		\bibitem{Bogachev2017} Bogachev, V. I., Doledenok, A. N., and Shaposhnikov, S. V. (2017). Weighted Zolotarev Metrics
		and the Kantorovich Metric. 
		\emph{Doklady Mathematics}, 95(2), 113-117. 
		
		\bibitem{pfeiffer} Bonnans, J. F., Liu, P., and Pfeiffer, L. (2022).
		Error estimates of a theta-scheme for second-order mean field games. \emph{arXiv preprint}, arXiv:2212.08128.
		
		\bibitem{cardaliaguet2019master} Cardaliaguet, P., Delarue, F., Lasry, J. M., and Lions, P. L. (2019). The master equation and the convergence problem in mean field games. \emph{Annals of Mathematics Studies}, Princeton University Press.
		
		\bibitem{notescetraro} Cardaliaguet, P., and Porretta, A. (2021). An introduction to mean field game theory. In \emph{Mean Field Games}, chapter
		1, Cetraro, Italy 2019, Cardaliaguet, P., Porretta, A. (Eds.), LNM 2281, 1-148, Springer.
		
		\bibitem{cardaliaguet2021weak} Cardaliaguet, P., and Souganidis, P. (2022). 
		Monotone solutions of the master equation for mean field games with no
		idiosyncratic noise. \emph{SIAM Journal on Mathematical Analysis}, 54(4):4198–4237.
		
		\bibitem{carmona2018probabilistic} Carmona, R., and Delarue, F. (2018). \emph{Probabilistic Theory of Mean Field Games with Applications} I-II. Springer Nature.
		
		
		
		\bibitem{cecchin2019a} Cecchin, A., Dai Pra, P., Fischer, M., and Pelino, G. (2019). On the convergence problem in mean field games: a two state model without uniqueness. \emph{SIAM Journal on Control and Optimization}, 57(4), 2443-2466. 
		
		\bibitem{cecchin2021} Cecchin, A., and Delarue, F. (2022). Selection by vanishing common noise for potential finite state mean field games. \emph{Communications in Partial Differential Equations}, 47 (1), 89-168.  
		
		\bibitem{cecchindelarue2022} Cecchin, A. and Delarue, F. (2022). Weak solutions to the master equation of potential mean field games. arXiv preprint	arXiv:2204.04315.  
		
		\bibitem{cecchinfischer} Cecchin, A., and Fischer, A. (2020). Probabilistic approach to finite state mean field games. \emph{Appl. Math. Optim}, 81(2), 253–300. 
		
		\bibitem{cecchin2019b} Cecchin, A., and Pelino, G. (2019). Convergence, fluctuations and large deviations for finite state mean field games via the master equation. \emph{Stochastic Processes and their Applications}, 129(11), 4510-4555.
		
		
		\bibitem{CCD2014} Chassagneux, J.-F., Crisan, D., Delarue, F (2022). A probabilistic approach to classical solutions of the master equation for large population equilibria. \emph{Memoirs of the AMS}, 280(1379). 
		
		\bibitem{CCD2019} Chassagneux, J.-F., Crisan, D., Delarue, F. (2019). Numerical method for FBSDEs of McKean–Vlasov type. \emph{The Annals of
			Applied Probability}, 29(3), 1640-1684. 
		
		
		\bibitem{delaruecetraro} Delarue, F. (2021). Master equation for finite state mean field games with additive common noise. In \emph{Mean Field Games}, chapter 3,
		Cetraro, Italy 2019, Cardaliaguet, P., Porretta, A. (Eds.), LNM 2281,  203-248, Springer.
		
		\bibitem{schauderdiscrete} Funaki, T. and Sethuraman, S. (2021). Schauder estimate for quasilinear discrete PDEs of parabolic type. \emph{arXiv preprint }	arXiv:2112.13973.  
		
		\bibitem{gangbo2021} Gangbo, W., and M\'esz\'aros, A. R. (2022). Global well-posedness of Master equations for deterministic displacement convex potential mean field games. \emph{Communications on Pure and Applied Mathematics}, 75(12), 2685-2801. 
		
		\bibitem{zhang2021} Gangbo, W., M\'esz\'aros, A. R., Mou, C., and Zhang, J. (2022). Mean field games master equations with non-separable hamiltonians and displacement monotonicity. \emph{Ann. Probab.}, 50(6), 2178-2217.
		
		\bibitem{gomes} Gomes, D.A., Mohr, J., Souza, R.R (2013). Continuous time finite state mean field games. \emph{Appl. Math. Optim.}
		68(1), 99–143.
		
		\bibitem{gomes_book} Gomes, D.A. Pimentel, E.A., and Voskanyan, V. (2016). \emph{Regularity Theory for Mean-Field Game Systems}, Springer, Berlin.
		
		\bibitem{gomes_numerics} Gomes, D.A.,  and Sa\'ude, J. (2021). Numerical methods for finite-state mean-field games satisfying a monotonicity condition. \emph{Appl. Math.  Optim.},  83(1), 51–82.
		
		\bibitem{haeedinkanloo}  Hadikhanloo, S., and Silva, F. J. (2019). Finite mean field games: fictitious play and convergence to a first order continuous mean field game. \emph{Journal de Math\'ematiques Pures et Appliqu\'ees}, 132, 369-397.
		
		
		\bibitem{hcm} Huang, M., Malham\'e, R. P., and Caines, P. E. (2006). Large population stochastic dynamic games: closed-loop McKean-Vlasov systems and the Nash certainty equivalence principle. \emph{Communications in Information and Systems}, 6(3), 221-252.
		
		\bibitem{kallemberg} Kallenberg, O. (2001). \emph{Foundations of Modern Probability}, 2nd edn., Springer, Berlin.
		
		\bibitem{katznelson} Katznelson, Y. (2004). \emph{An introduction to harmonic analysis}. Cambridge University Press.
		
		\bibitem{kolo} Kolokoltsov, V. N.  (2010). \emph{Nonlinear Markov processes and kinetic equations}. Cambridge Tracts in Mathematics, 182. Cambridge University Press. 
		
		\bibitem{kushner} Kushner, H. J., and Dupuis, P. (2001). \emph{Numerical Methods for Stochastic Control Problems in Continuous Time}, volume 24 of Applications of Mathematics. Springer, New
		York, 2nd edition. 
		
		\bibitem{lasry2007mean} Lasry, J. M., and Lions, P. L. (2007). Mean field games. \emph{Japanese journal of mathematics}, 2(1), 229-260.
		
		\bibitem{lauriere} Laurière, M. (2021). Numerical Methods for Mean Field Games and Mean Field Type Control. \emph{arXiv preprint}	arXiv:2106.06231. 
		
		\bibitem{laurierebis} Laurière, M., Perrin, S.,  Geist, M., Pietquin, O. (2022). Learning Mean Field Games: A Survey. \emph{arXiv preprint} arXiv:2205.12944.
		
		\bibitem{lions2007cours} Lions, P. L. (2007). Courses at au Collège de France. Available at www.college-de-france.fr
		
		\bibitem{mou2020} Mou, C., and Zhang, J. (2022). Wellposedness of second order master equations for mean field games with nonsmooth data. \emph{Memoirs of the AMS}, to appear.
		
		\bibitem{mou2022} Mou, C., and Zhang, J. (2022). Mean Field Game Master Equations with Anti-monotonicity Conditions. \emph{arXiv preprint}, arXiv:2201.10762.
		
		
		\bibitem{bishop} Phelps, R. R. (2009). \emph{Convex functions, monotone operators and differentiability} (Vol. 1364). Springer.
		
		\bibitem{Rio98} Rio, E. (1998). Distaces minimales et distances idéales. \emph{C. R. Acad. Sci. Paris}, 326 (1), 1127-1130.
		
		\bibitem{stegall} Stegall, C. (1978). Optimization of functions on certain subsets of Banach spaces. \emph{Mathematische Annalen}, 236(2), 171-176.
		
		
		
		\bibitem{zolotarev79} Zolotarev, V. M. (1979). Properties and relations of certain types of metrics. (Russian) \emph{Studies in mathematical statistics}, 3. Zap. Nauchn. Sem. Leningrad. Otdel. Mat. Inst. Steklov. (LOMI) 87, 18–35. 
		
		\bibitem{zolotarev83} Zolotarev, V. M. (1983). Probability metrics. (Russian) \emph{Teor. Veroyatnost. i Primenen.} 28 (2), 264–287.
		
	\end{thebibliography}
\end{document}